\documentclass{scrartcl}

\usepackage[utf8]{inputenc}
\usepackage[T1]{fontenc}

\usepackage{graphicx}
\usepackage{amsmath,amssymb,amsthm}
\usepackage{esint} 
\usepackage{enumitem,color}
\usepackage{comment}
\usepackage{hyperref}
\usepackage{diagbox}
\usepackage{tikz}

\usepackage[style=alphabetic,maxalphanames=4,minalphanames=3,maxbibnames=99]{biblatex}
\AtEveryBibitem{\clearfield{issn}}
\newtheorem{thm}{Theorem}[section]
\newtheorem{prop}[thm]{Proposition}
\newtheorem{lem}[thm]{Lemma}
\newtheorem{cor}[thm]{Corollary}
\theoremstyle{definition}

\theoremstyle{remark}
\newtheorem{rem}[thm]{Remark}

\newcommand{\Z}{\mathbb{Z}}
\newcommand{\R}{\mathbb{R}}
\newcommand{\C}{\mathbb{C}}

\DeclareMathOperator{\Prob}{Prob}
\DeclareMathOperator{\Li}{Li}

\newcommand{\cQ}{\mathcal Q}
\newcommand{\Qs}{Q^{\mathrm{sim}}}
\newcommand{\Ks}{K^{\mathrm{sim}}}
\newcommand{\xs}{x_{\mathrm{s}}}
\newcommand{\ys}{y_{\mathrm{s}}}
\newcommand{\zs}{z_{\mathrm{s}}}
\newcommand{\hatKs}{\hat K^{\mathrm{sim}}}

\newcommand{\ve}{\varepsilon}
\newcommand{\vr}{\varrho}

\DeclareMathOperator{\Airy}{Ai}

\title{%
  Distance statistics of block-weighted planar quadrangulations}
\author{J\'er\'emie Bouttier\thanks{Sorbonne Université and Université Paris Cité, CNRS, IMJ-PRG, F-75005 Paris, France} \and
  Emmanuel Guitter\thanks{Université Paris-Saclay, CNRS, CEA, Institut de physique
    théorique, 91191, Gif-sur-Yvette, France} \and 
  Hugo Manet\thanks{Université Paris Cité, CNRS, IRIF, F-75013, Paris, France}}
\date{\today}

\begin{document}
\maketitle

\begin{abstract}
  We study random planar quadrangulations
  in which each block -- i.e., each simple (without multiple edges) component --
  is assigned a weight $u$.
  We derive an explicit expression for 
  the distance-dependent two-point function,
  defined as the generating function of such block-weighted maps with two marked edges at a fixed graph distance.
  Using contour integral representations in two variables combined with a delicate saddle-point analysis,
  we compute the associated distance profile
  in the scaling limit of large quadrangulations.
  We recover the known phase transition at $u = 9/5$,
  characterized by distinct scaling exponents and different scaling functions,
  below, at, and above criticality.
  We also discuss the block distance profile,
  where the two marked edges are conditioned to lie within the same block.
\end{abstract}

\tableofcontents

\section{Introduction}
\label{sec:intro}

\subsection{Context and motivation}

The study of random planar maps and their large size limits
has been a constant subject of interest over the last decades,
with a myriad of combinatorial as well as probabilistic results discovered over the years; see
e.g.\ \cite{LeGall2014, Schaeffer15, Curien2023, Budd2023} and references therein.
In recent works \cite{Fleurat2024,ZSPhD},
some attention was paid to the statistical properties of so-called
\emph{block-weighted} planar maps which we may describe as follows.

It has long been recognized that many families of planar maps have
natural decompositions into blocks arranged into a tree-like structure.
For instance, general maps can be decomposed into blocks formed by their $2$-connected parts,
which articulate as a tree of blocks.
A multitude of other examples of map families exhibiting such a decomposition scheme
is listed in \cite{BFSS01}.
Let us mention simple triangulations decomposed into irreducible blocks,
bipartite maps into simple bipartite blocks,
but also bicubic maps into $3$-connected blocks \cite{Tutte1963},
or even maps with an additional decoration such as meandric systems decomposed into irreducible blocks \cite{LandoZvonkin92_Meanders}.

The model of block-weighted maps simply amounts to assigning a fixed real positive weight $u$ per block,
and seeing how this weight affects the statistics of the maps at hand.
Remarkably, it was shown in \cite{Fleurat2024,ZSPhD} that a phase transition occurs at a critical value $u_{\mathrm{crit}}$ of the block weight.
For $u < u_{\mathrm{crit}}$, the maps are typically formed of a single macroscopic block
carrying a finite fraction $f(u) > 0$ of the total size $n$ of the map (defined as, say, its number of edges),
with attached outgrowths made of blocks of vanishing density
i.e.\ with a size $o(n)$.
The order parameter $f(u)$ vanishes continuously at $u = u_{\mathrm{crit}}$,
where the largest blocks have size $\Theta(n^{\beta})$
for some $\beta \in [\frac{1}{2},1)$ depending on the universality class of the problem at hand.
For $u > u_{\mathrm{crit}}$, the maps degenerate into trees on a macroscopic scale.

It is worth noting that a similar transition pattern had already been observed in the physics literature,
in the context of Liouville Quantum Gravity (LQG) for models of
random maps attached into tree structures by so-called \emph{touching points}
\cite{Das_1989_critical_matrix_models,Korchemsky_1992_Loops_curvature,Klebanov_1995_Liouville}.
In that context, it was recognized in \cite{DG25} that block-weighted maps indeed provide
a realization of the so-called \emph{LQG duality} which expresses that 
the subcritical phase $u < u_{\mathrm{crit}}$ and the critical phase $u = u_{\mathrm{crit}}$
are linked by a number of universal duality relations.

\bigskip

A nice outcome of the study of \cite{Fleurat2024,ZSPhD}
is that the metric space associated with (undecorated) planar maps endowed with their graph distance
converges, up to scaling depending on $u$, to different continuous metric spaces:
the Brownian sphere for $u < u_{\mathrm{crit}}$,
the $\frac{3}{2}$-stable Lévy tree for $u = u_{\mathrm{crit}}$,
and the Brownian tree for $u > u_{\mathrm{crit}}$.

In this paper, we explore how this change of behavior can be captured by 
one of the simplest metric-sensitive quantities,
the so-called \emph{distance-dependent two-point function} of block-weighted maps,
which, so to say, 
measures at fixed $u$ the distance profile between two points chosen uniformly at random on the map.
More precisely, we focus in this paper on the study of a particular block-weighted map problem considered in \cite{Fleurat2024},
namely that of \emph{planar quadrangulations decomposed into simple blocks},
for which $u_{\mathrm{crit}} = 9/5$.
Some of the metric properties of this decomposition were already discussed in \cite{quadwithnoME}
in the context of so-called \emph{minimal neck baby universes (minbus)}.
In some sense, our study can be viewed as an extension of that paper
where we allow for an arbitrary value of $u$ instead of just considering $u=1$ (general quadrangulations) and $u=0$ (simple quadrangulations).

\bigskip

In addition to the model at hand, our motivations are also
methodological and come from the realm of analytic combinatorics
\cite{Flajolet2009}. Indeed, several models of random planar maps have
the peculiar property that their distance-dependent two-point function
admits an explicit expression. The simplest example is for uniform
random planar quadrangulations: denote by $R_\ell^{(n)}$ the number of
planar quadrangulations with $n$ faces and two marked
points\footnote{More precisely, one vertex and one edge.} at distance
at most $\ell$, then it is known~\cite{geod,onewall} that the
generating function $R_\ell:=\sum_{n=0}^\infty R_\ell^{(n)} g^n$
admits the explicit expression
\begin{equation}
  \label{eq:Rellexplic}
  R_\ell = R \frac{\left(1-x^\ell\right)\left(1-x^{\ell+3}\right)}{\left(1-x^{\ell+1}\right)\left(1-x^{\ell+2}\right)}
\end{equation}
with $R,x$ certain auxiliary series depending on $g$. Similar
expressions exist for other families of random maps or embedded
trees~\cite{DiFrancesco2005,BousquetMelou2006,Kuba2011,AB2013,gen2p}
as well as for other metric-related
quantities~\cite{threepoint,FG2014,AB2016}. Studying the asymptotics
of $R_\ell^{(n)}$ for $n,\ell$ large can be done by extracting the
coefficient of $g^n$ in~\eqref{eq:Rellexplic} using a contour integral
and performing a saddle-point analysis. Note that we have to deal with
an integral in one variable, even though the integrand depends on the
two parameters $n,\ell$. In contrast, simple
quadrangulations~\cite{quadwithnoME}, and a fortiori block-weighted
quadrangulations, do not admit an explicit expression such
as~\eqref{eq:Rellexplic} for their two-point function. Instead, we
have access to this two-point function through a \emph{bivariate}
series which involves also a sum over the distance parameter
$\ell$. As a warm-up, consider the bivariate series
$\hat R(t,g) := \sum_{\ell=1}^\infty R_\ell t^\ell$, which by some
elementary manipulations\footnote{Write
  $R_\ell=R\left(1+\frac{1-x^2}x\left(\frac1{1-x^{\ell+2}}-\frac1{1-x^{\ell+1}}\right)\right)$,
  expand $\frac1{1-x^{\ell+i}}$ as $\sum_{k=0}^{\infty} x^{k(\ell+i)}$
  and interchange the summations over $\ell$ and $k$ in
  $\hat R(t,g)$.}  can be rewritten in the form
\begin{equation}
  \label{eq:merogenform}
  \sum_{k=0}^\infty \frac{a_k}{1-t x^k}
\end{equation}
for some univariate series $a_k$, $k \geq 0$. From the analytic point
of view, this is a meromorphic function in $t$ having poles at
$t=x^{-k}$ for all $k \geq 0$. As visible in
Theorem~\ref{thm:hatKexpr} below, the two-point function of
block-weighted quadrangulations also involves a function of this form.
Such a function does not seem to fall into the class amenable to generic
results of multivariate analytic combinatorics~\cite{ACSV24}. We are
therefore led to develop our own approach, which consists in
performing two successive contour integrals in $t$ and $g$ and
analyzing their asymptotics. A particularly challenging aspect is the
vicinity of the radius of convergence of the series $x$, where we have
$x=1$ so that the poles of~\eqref{eq:merogenform} coalesce, requiring
a delicate analysis. We believe that our approach could be potentially
adapted for other problems in which series similar
to~\eqref{eq:Rellexplic} appear, and as such constitute a valuable
contribution to analytic combinatorics.



\subsection{Basic definitions and notations}

Following the standard terminology, found for example in \cite{Schaeffer15},
we define a \emph{planar map} (hereafter called a map for short)
as a connected (multi)graph drawn on the sphere without edge crossings.
In general, loops and multiple edges are allowed;
a map with no loops nor multiple edges is called \emph{simple}.
A map consists of vertices, edges, faces, and corners.
A map is said \emph{rooted} if it has a distinguished corner,
called the \emph{root corner}, which is incident to the \emph{root vertex};
the edge following the root corner counterclockwise is called the \emph{root edge}.
The \emph{degree} of a face or vertex is its number of incident corners.
A \emph{quadrangulation} is a map whose faces all have degree four.
Note that a planar quadrangulation cannot have loops, however it may have multiple edges.
Therefore, in a simple planar quadrangulation, we just have to forbid multiple edges.

Given a planar quadrangulation $\cQ$, let us cut the sphere along all
the $2$-cycles (cycles of length $2$) of $\cQ$: we obtain a number
$b(\cQ) \geq 1$ of pieces, which we call the \emph{number of blocks}
of $\cQ$. Note that we have $b(\cQ)=1$ if and only if $\cQ$ is
simple. Intuitively speaking, the pieces themselves can be thought as
the ``blocks'' of $\cQ$, but we shall give in
Section~\ref{sec:discreteTPF} a more precise combinatorial definition,
as the blocks are actually simple quadrangulations. Given two
parameters $g,u$, we assign to $\cQ$ the weight
\begin{equation}
  w(\cQ) := g^{f(\cQ)} u^{b(\cQ)-1}
  \label{eq:wQdef}
  \end{equation}
where $f(\cQ)$ denotes the number of faces of $\cQ$. Then, we define
the generating function $Q(g,u)$ of block-weighted planar quadrangulations as
\begin{equation}
  Q(g,u) := \sum_{\cQ} w(\cQ)
  \label{eq:Qgudef}
\end{equation}
where the sum runs over the set of all rooted planar
quadrangulations. Note that, for $u=1$ and $u=0$, this series specializes to
\begin{equation}
  Q(g,1)=\sum_{n \geq 1} \frac{2 \cdot 3^n (2n)!}{n! (n+2)!} g^n, \qquad
  Q(g,0)=\sum_{n \geq 1} \frac{2(3n-3)!}{n!(2n-1)!} g^n
  \label{eq:Qg10}
\end{equation}
which are the generating functions of respectively rooted planar
quadrangulations and rooted simple planar quadrangulations. By a
well-known bijection, these are also the generating functions of
respectively rooted planar maps and rooted non-separable (i.e.\
$2$-connected) planar maps with a weight $g$ per
edge~\cite{Tutte1963}.

Given a real number $u \geq 0$, an integer $n \geq 1$, and a rooted
planar quadrangulation $\cQ$ with $n$ faces, we assign to $\cQ$ a
probability
\begin{equation}
  \Prob(\cQ) := \frac{u^{b(\cQ)-1}}{[g^n] Q(g,u)}
\end{equation}
where $[g^n] Q(g,u)$ is the coefficient of $g^n$ in $Q(g,u)$ (this
coefficient is a polynomial in $u$). Varying $u$ and $n$, we get a
family of probability distributions on quadrangulations, which
constitutes the (fixed size) model of \emph{block-weighted planar
  quadrangulations}.

As discussed in~\cite{Fleurat2024}, this model is closely related to
the model of block-weighted planar maps, in which a block is defined
as a $2$-connected component (in contrast with the simple components
considered for quadrangulations). Our series $Q(g,u)$ is related to
the series $M(g,u)$ of this reference by $M(g,u)=1+u\,Q(g,u)$.

\subsection{Outline of this paper}

The aim of Section~\ref{sec:discreteTPF} is to obtain an expression as explicit as possible for the two-point function of block-weighted quadrangulations.
Section~\ref{ssec:blockweightedquad} provides a precise description of 
the block decomposition of rooted planar quadrangulations into simple blocks,
which results in the fundamental substitution relation of Proposition~\ref{prop:Qsimplesubstitutionrelation} for $Q(g,u)$.
We introduce in Section~\ref{ssec:blockweightedTPF} the two-point function of planar quadrangulations
as the generating function of maps with two marked edges at a prescribed graph distance from each other,
and show that it also satisfies a fundamental substitution relation
which links it to the two-point function of simple quadrangulations,
as computed in \cite{quadwithnoME},
yielding Theorem~\ref{thm:hatKexpr}.
We also introduce the slightly simpler \emph{block two-point function} 
for which the two marked edges are conditioned to belong to the same block (see Theorem~\ref{thm:hatLexpr}).

Section~\ref{sec:asymptQ} is devoted to the large $n$ asymptotic limit
of the ``number'' $[g^n]Q(g,u)$ of maps of size $n$,
as summarized in Proposition~\ref{prop:Qn_asym}.
Even though these asymptotic behaviors can be found in \cite{Fleurat2024},
we present here a full self-contained derivation via a contour integral
which mirrors the transfer theorems of \cite{Flajolet2009},
as it allows us to introduce the various tools that will be needed for the next section.
We introduce in Section~\ref{sec:ratpar} the rational parametrization
of $g$ and $Q$ in the contour integral, that we will use throughout the paper.
We show in Section~\ref{ssec:saddle} how to obtain the large $n$ asymptotics
of $[g^n]Q(g,u)$ from a saddle-point method,
using different explicit integration contours according to the cases: after some preliminaries in Section~\ref{sssec:basicssaddle},
we discuss in Section~\ref{sssec:noncritQ} the subcritical and supercritical cases,
and in Section~\ref{sssec:critQ} the critical case which requires a slightly more involved contour.

Section~\ref{sec:asymptKl} addresses the large $n$ scaling limit of the distance-dependent two-point function itself.
Our main result is summarized in Proposition~\ref{prop:distanceprofiles},
where we identify the appropriate distance scaling
and give fully explicit expressions for the distance profiles in the various phases.
After some general analytic preliminaries in Section~\ref{ssec:asymptKl_prelim},
we detail our calculations for the supercritical regime in Section~\ref{ssec:supercritKl}.
The study of the other regimes requires the analysis of the singularity of an auxiliary function,
which is performed in Section~\ref{ssec:hsingtext} and Appendix~\ref{app:hsing}.
We then perform the calculations for the critical and subcritical regimes
in Sections~\ref{ssec:critKl} and~\ref{ssec:subcritKl} respectively.
We finally compute in Section~\ref{ssec:rootblockcrit}
the scaling limit of the block two-point function at the critical point 
and the associated distance profile.

We conclude in Section~\ref{sec:conc} by discussing the compatibility of our results
with those of \cite{Fleurat2024}.

\paragraph{Acknowledgments.} We thank Guillaume Chapuy and Gr\'egory Miermont for useful discussions. This work is partially supported by the ANR  grant CartesEtPlus ANR-23-CE48-0018.

\section{The discrete two-point function}
\label{sec:discreteTPF}

Throughout this section, we are dealing with generating functions of
quadrangulations, which we treat as formal power series.

\subsection{The block decomposition of rooted quadrangulations}
\label{ssec:blockweightedquad}

Let us give a precise combinatorial definition of the notion of block of a quadrangulation.
We first introduce some terminology.
Given a rooted planar quadrangulation $\cQ$, let us consider a $2$-cycle $\mathcal C$.
The \emph{interior} of $\mathcal C$ is the closed region delimited by $\mathcal C$ not containing the root corner.
We say that $\mathcal C$ is:
\begin{itemize}
  \item \emph{maximal} if it is not contained in the interior of another $2$-cycle,
  \item a \emph{neck} if, in the interior of $\mathcal C$, there is no edge connecting the two vertices visited by $\mathcal C$, apart from those from $\mathcal C$.
\end{itemize}
It is easily seen that the interiors of two different maximal $2$-cycles cannot share a common edge.

The \emph{subquadrangulation} $\cQ_\mathcal C$ associated with $\mathcal C$ is defined as the quadrangulation obtained by considering the interior of $\mathcal C$ and identifying its two outer edges together into a single edge denoted $e$.
We canonically root $\cQ_\mathcal C$ as follows:
consider the vertex $v$ of $\mathcal C$ closest\footnote{In \cite[Definition 7]{Fleurat2024}, the rooting is obtained in the general maps associated to the quadrangulations, which instead corresponds to choosing the vertex at \emph{even distance} from the root. Our convention will conserve the distances (see Figure~\ref{fig:recursionk_g}), which was not necessary for \cite{Fleurat2024}.} to the root of $\cQ$;
then we root $\cQ_\mathcal C$ at the corner following $e$ clockwise around $v$.

Let us now describe the block decomposition of $\cQ$.
We first define the \emph{root block} of $\cQ$ as the rooted simple quadrangulation obtained by 
``squeezing'' the interior of each maximal $2$-cycle,
i.e.\ removing its interior and merging its two sides into a single edge.
The recursive block decomposition of $\cQ$ is done as follows:
\begin{itemize}
  \item if $\cQ$ is simple, then it is identical to its root block, and we do nothing;
  \item otherwise, we split $\cQ$ into its root block
    and into the subquadrangulations associated with all its maximal $2$-cycles,
    which we recursively decompose.
\end{itemize}
A \emph{block} of $\cQ$ is defined as the root block of a quadrangulation encountered during this recursive decomposition.
It is called an \emph{inner block} of $\cQ$ if it is not equal to the root block of $\cQ$.
Note that the factor $u^{b(\cQ) -1}$ in \eqref{eq:wQdef} corresponds to a weight $u$ per inner block of $\cQ$.
Note furthermore that each face of $\cQ$ corresponds to a face in exactly one (root or inner) block of $\cQ$.

Interestingly, we may also give a global, non-recursive description of a block.
Indeed, it may be checked that there is a bijection between
the set of inner blocks of $\cQ$ and the set of its necks.
More precisely, given a neck $\mathcal C$,
the root block of the subquadrangulation $\cQ_\mathcal C$ is an inner block of $\cQ$,
and all the inner blocks of $\cQ$ are obtained in this way.

As just mentioned, in the above block decomposition, each face of $\cQ$ belongs to exactly one block.
Each edge can also be canonically assigned to exactly one block:
by convention, we decide that any given edge of $\cQ$ belongs to the same block
as the incident face \emph{on its right}, 
where we canonically orient each edge
by going away from the root vertex.
With that convention, the root edge belongs to the root block.

Armed with the above definitions, we may now establish the following proposition.
\begin{prop}
  \label{prop:Qsimplesubstitutionrelation}
  Denoting $\Qs(g):=Q(g,0)$ the univariate generating
  function of rooted simple quadrangulations, we have the following substitution relation:
\begin{equation}
  Q(g,u) = \Qs\left(z(g,u)\right), \qquad z(g,u):=g\left(1+u\,Q(g,u)\right)^2.
  \label{eq:Qsimplesubstitutionrelation}
\end{equation}
\end{prop}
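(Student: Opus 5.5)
We prove \eqref{eq:Qsimplesubstitutionrelation} by a bijective decomposition of a rooted quadrangulation $\cQ$ into its root block together with one decoration per edge of the root block. The combinatorial identity to establish is that rooted quadrangulations are in bijection with pairs consisting of a rooted simple quadrangulation $S$ (the root block) and, for each edge $e$ of $S$, an element of the class $\{\emptyset\}\cup\{\text{rooted quadrangulations}\}$. The empty choice means that $e$ is a genuine single edge of $\cQ$. A nonempty choice $\cQ'$ means that $e$ arises from squeezing a maximal $2$-cycle whose associated subquadrangulation is $\cQ'$. In terms of weights, the generating function of the decoration attached to one edge is $1+u\,Q(g,u)$: the factor $u$ accounts for the root block of $\cQ'$, which becomes an inner block of $\cQ$, and $Q(g,u)$ already carries $u$ for the inner blocks of $\cQ'$. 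Faces are distributed exactly among the blocks, so $g$-weights multiply.

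First, we make the bijection precise. Given $\cQ$, the maximal $2$-cycles have interiors sharing no edges, as noted before the statement. Squeezing each of them produces $S$, and each maximal $2$-cycle corresponds to a unique edge of $S$. Conversely, given $S$ and decorations, we replace each decorated edge $e$ by the subquadrangulation $\cQ'$, cut open along its root edge. The canonical rooting of $\cQ_\mathcal C$ (the corner following $e$ clockwise around the endpoint closest to the root) tells us how to glue $\cQ'$ back unambiguously. This is possible because in $S$ the edge $e$ has a well-defined endpoint closest to the root vertex: $S$ is bipartite, so the two endpoints are at different distances. We then check the two compositions are inverse. The substantive points are that the $2$-cycles created by the gluing are exactly the maximal ones, and that no $2$-cycle straddles two decorations or passes through $S$. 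The latter holds because $S$ is simple, and any $2$-cycle of the glued map not inside a single decoration would project to a $2$-cycle of $S$.

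Second, we count. Euler's formula for quadrangulations gives $2f$ edges for $f$ faces, so $S$ with $f$ faces has $2f$ edges. Summing over $S$ therefore gives
\[
\sum_S g^{f(S)}\bigl(1+uQ\bigr)^{2f(S)}=\Qs\bigl(g(1+uQ)^2\bigr).
\]
The rooting of $\cQ$ is the rooting of $S$, since the root edge belongs to the root block by the convention on edges. One should verify the edge case where the root edge itself lies on a maximal $2$-cycle: we take the side containing the root corner as the outside, so the root corner stays in $S$.

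The main obstacle is the bijectivity check in the first step: showing that the gluing reconstructs exactly the set of maximal $2$-cycles and that the canonical rooting makes the inverse map well defined. The substitution itself is then immediate. As a sanity check, at $u=0$ the identity reduces to $Q=\Qs(g)$, as it should.
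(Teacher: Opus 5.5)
Your proposal is correct and is essentially the paper's proof. Both use a bijection between rooted quadrangulations and rooted simple quadrangulations in which each edge is optionally decorated by a rooted quadrangulation, which gives a factor $1+u\,Q(g,u)$ per edge and hence, since $f$ faces means $2f$ edges, the weight $z(g,u)=g\left(1+u\,Q(g,u)\right)^2$ per face. Your extra remarks only spell out details the paper leaves implicit: that simplicity of the root block forces every $2$-cycle of the glued map into a single decoration, and that bipartiteness makes the closest endpoint used for gluing well defined.
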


\begin{proof}
  Define a \emph{decorated simple quadrangulation} as a rooted simple planar quadrangulation
  where each edge is possibly decorated by a rooted, not necessarily simple, planar quadrangulation.
  We claim that there is a bijection between rooted planar quadrangulations
  and decorated simple quadrangulations.
  The direct mapping is precisely the recursion step of the block decomposition,
  where the subquadrangulations are precisely the decorations.
  Conversely, given a decorated simple quadrangulation,
  we reconstruct a general planar quadrangulation
  by inflating each decorated edge into a $2$-cycle,
  and gluing inside it the decoration where we similarly inflated the root edge\footnote{
  We match the root vertex of the decoration with the vertex incident to the decorated edge
  closest to the root vertex of the simple quadrangulation.}.
  
  Under this bijection, giving the weight \eqref{eq:wQdef} to a rooted planar quadrangulation $\cQ$
  amounts in the associated decorated simple quadrangulation to
  assigning a weight $g$ per face of the simple quadrangulation,
  a weight $u$ per decorated edge,
  and multiplying by the weights of all decorations.
  This is equivalent to just attaching a weight $\nobreak{z(g,u)=g(1+u\, Q(g,u))^2}$,
  to each face of a simple quadrangulation, since there are twice as many edges as faces,
  and each edge may either be decorated (thereby contributing a weight $u\, Q(g,u)$)
  or not (thereby contributing $1$).
  This leads directly to equation \eqref{eq:Qsimplesubstitutionrelation}.
\end{proof}

\subsection{Block-weighted two-point functions}
\label{ssec:blockweightedTPF}

We are now interested in enumerating block-weighted rooted quadrangulations with an \emph{additional marked edge}.
We call \emph{root distance} of the marked edge the graph distance from the root vertex to the furthest vertex incident to that marked edge.
A marked edge at root distance $\ell$ ($\ell\geq 1$) then connects a vertex at \emph{root distance} $(\ell-1)$ to one at root distance $\ell$.

Let us denote by $K_\ell(g,u)$ the generating function of rooted quadrangulations
with an additional marked edge at root distance $\ell$,
with a weight as in \eqref{eq:wQdef}.
For $\ell = 1$, we conventionally require that the root edge and the marked edge do not have the same two endpoints,
as it makes our formulas simpler.
We further introduce the series 
\begin{equation}
  \hat K(t,g,u) := \sum_{\ell\geq 1}t^{\ell-1} K_\ell(g,u).
  \label{eq:defhatK}
\end{equation}
The quantity $K_\ell(g,u)$, or equivalently $\hat K(t,g,u)$, 
is called the \emph{distance-dependent two-point function} of block-weighted planar quadrangulations.

By refining the arguments leading to Proposition~\ref{prop:Qsimplesubstitutionrelation},
we obtain the following:
\begin{prop}
  \label{prop:hatKsubst}
  Denoting by $\hatKs(t,g):= \hat K(t,g,0)$ the distance-dependent two-point function of simple planar quadrangulations, we have the relation
  \begin{equation}
    \hat K(t,g,u) = (1 + u\, Q(g,u))^2 \frac{\hatKs(t,z(g,u))}{1 - u \, \hatKs (t,z(g,u))}, \qquad z(g,u):=g\left(1+u\,Q(g,u)\right)^2.
    \label{eq:substhatK}
  \end{equation}
\end{prop}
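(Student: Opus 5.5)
We refine the bijection of Proposition~\ref{prop:Qsimplesubstitutionrelation} by following the marked edge through the recursive block decomposition. Let $\cQ$ be a rooted quadrangulation with a marked edge $e$ at root distance $\ell$. By the convention on edges, $e$ belongs to exactly one block. That block is reached from the root block by a chain of nested necks $\mathcal C_1\supset\mathcal C_2\supset\dots\supset\mathcal C_k$, with $k\ge 0$. Here $\mathcal C_1$ is a maximal $2$-cycle, and each $\mathcal C_{i+1}$ is a maximal $2$-cycle of the subquadrangulation $\cQ_{\mathcal C_i}$.

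The plan is to decompose $\cQ$ along this chain into $k+1$ levels. At level $0$ we have the root block. It is a simple quadrangulation in which the edge $e_1$ corresponding to $\mathcal C_1$ is marked, or, if $k=0$, the edge $e$ itself is marked. At level $i$ we have the root block of $\cQ_{\mathcal C_i}$, again with a marked edge, which is either the next neck or $e$. Every other edge of every block carries an optional decoration by an arbitrary rooted quadrangulation, exactly as in the proof of Proposition~\ref{prop:Qsimplesubstitutionrelation}.

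The key metric point is that distances are additive along the chain. The rooting convention for $\cQ_{\mathcal C}$ puts its root at the vertex of $\mathcal C$ closest to the root of $\cQ$. Suppose the neck edge $e_i$ has root distance $\ell_i$ in its level. Then its near endpoint is at distance $\ell_i-1$ from the root of that level, which justifies the convention of rooting on the near vertex. I also need that passing through a $2$-cycle does not create shortcuts. Any geodesic from the root into the interior of $\mathcal C$ must cross one of its two vertices, and inflated decorations only add detours, so distances within each simple block equal distances in $\cQ$ (this is the content of the footnote and Figure~\ref{fig:recursionk_g}). One must check that the corner chosen for rooting gives root distance of $e$ in the subquadrangulation equal to $\ell-(\ell_1-1)$, and that the $\ell=1$ convention (marked edge not parallel to the root edge) is exactly what makes a neck edge through the root consistent.

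With this, root distances combine as $\ell=\sum_{i=0}^{k}(\ell_i-1)+1$, so the exponents of $t^{\ell-1}$ add. Each level then contributes a factor $\hatKs(t,z)$, taken on simple quadrangulations with weight $z$ per face, where $z=g(1+uQ)^2$ accounts for the decorations as before. There is one correction: the marked edge itself is not freely decorated. So each level has face weight $z$ but loses one edge factor $(1+uQ)$ on its marked edge. The neck edges at levels $0,\dots,k-1$ are forced to be decorated by an inner block weighted $u$, which is the next level, so each contributes a factor $u$. The final marked edge $e$ is undecorated up to its own optional decoration, since $e$ lies in the block on its right, and this gives a factor $(1+uQ)$. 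Careful bookkeeping of these edge factors gives a per-level factor and a global factor that together make the stated prefactor $(1+uQ)^2$.

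Summing the geometric series over $k$ gives $(1+uQ)^2\,\hatKs/(1-u\hatKs)$. The main obstacle is this edge bookkeeping: seeing exactly why the prefactor is $(1+uQ)^2$ and not a power depending on $k$. This is tied to the orientation conventions, namely which side of a $2$-cycle gets identified with the marked edge $e$ and which face lies on its right. I would settle it first on small cases, $k=0$ and $k=1$ with tiny quadrangulations, before writing the general bijection.
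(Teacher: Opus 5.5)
\textbf{There is a genuine gap: not every level contributes a factor $\hatKs(t,z)$.} Your chain through the recursive block decomposition is reasonable, and distances are indeed additive along it. The trouble is the distinguished edge of level $i$, meaning the edge $e_{i+1}$ of $B_i$ whose decoration contains $e$, or at the last level the edge of $B_k$ carrying $e$. This edge can be the \emph{root edge} of $B_i$. That happens whenever $\cQ_{\mathcal C_i}$ (with $\cQ_{\mathcal C_0}:=\cQ$) has a $2$-cycle through the endpoints of its root edge enclosing $e$. A simple instance is $e$ lying inside a $2$-cycle that contains the root edge of $\cQ$; in particular, the $\mathcal C_i$ are not necks in general.

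Such levels are not counted by $\hatKs$, which requires the marked edge to differ from the root edge. The $\ell=1$ convention only discards chains made \emph{entirely} of such levels. If you keep only the levels you describe, your own edge factors give weight $\frac{u}{1+uQ}z^{f_i}$ to each intermediate level and $z^{f_k}$ to the last one. The total is then $\hatKs/\bigl(1-\frac{u}{1+uQ}\hatKs\bigr)$, which is not the target. No bookkeeping can rescue the term-by-term matching with $(1+uQ)^2\sum_k u^k\hatKs^{k+1}$ that you intend. Indeed, the depth-$0$ configurations (marked edge in the root block) have generating function $\hat L=\hatKs(t,z)$ by~\eqref{eq:substLl}, with no factor $(1+uQ)^2$.

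\textbf{How to repair it within your framework.} Let each level be of one of two types, with intermediate levels carrying the extra factor $\frac{u}{1+uQ}$:
\begin{itemize}
\item non-root type, weighted $\hatKs(t,z)$;
\item root type, weighted $\Qs(z)=Q(g,u)$ by Proposition~\ref{prop:Qsimplesubstitutionrelation}, and contributing $t^0$.
\end{itemize}
The chains made only of root-type levels are exactly the configurations where $e$ has the same endpoints as the root edge. They sum to $Q/\bigl(1-\frac{uQ}{1+uQ}\bigr)=(1+uQ)Q$, which is the term $Q+uQ^2$ subtracted in~\eqref{eq:hatK1_combi}. Write $A=1+uQ$ and use $1-\frac{uQ}{A}=\frac1A$. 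You then get
\[
\frac{\hatKs+Q}{1-\frac{u}{A}(\hatKs+Q)}-AQ=\frac{A(\hatKs+Q)}{1-u\hatKs}-AQ=\frac{A^2\hatKs}{1-u\hatKs},
\]
as claimed.

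\textbf{How the paper avoids this.} It first squeezes every $2$-cycle that does not separate the (inflated) root edge from the marked edge. The resulting skeleton has simple root and marked edges and only nested separating necks. Its geometric series therefore counts separating necks, not the depth of the block containing $e$. The factor $(1+uQ)^2$ then comes from the decorations allowed on both sides of the root edge and of the marked edge.
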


\begin{proof}
  We say that an edge is \emph{simple} if it is not part of a $2$-cycle.
  We call $k_\ell(z, u)$ the generating function of rooted quadrangulations with an additional marked edge at root distance $\ell$, different from the root edge 
  and such that the root edge and the marked edge are simple edges
  and \emph{all the $2$-cycles contain the marked edge in their interior} (see Figure~\ref{fig:recursionk_g} for an illustration).
  A quadrangulation is counted with a weight $z$ per face and $u$ per inner block.
  
  We claim that we have the relation
  \begin{equation}
    k_\ell(z, u) = \Ks_\ell(z) + u\sum_{m=1}^\ell \Ks_m(z) k_{\ell+1-m}(z,u), \qquad \ell\geq 1
    \label{eq:recursionk_g}
  \end{equation}
  \begin{figure}
    \centering
    \includegraphics{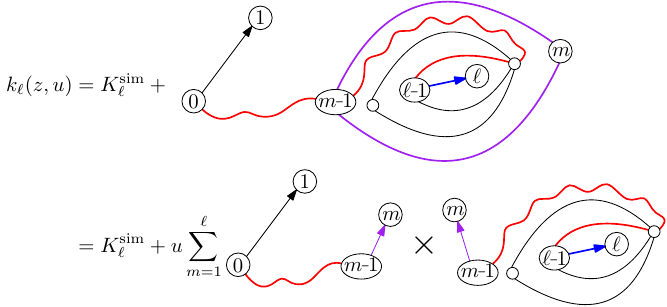}
    \caption{Graphical proof of Equation~\eqref{eq:recursionk_g}.
      The labels represent the root distances.
      All the $2$-cycles contain the blue marked edge in their interior.
      The unique maximal $2$-cycle $\mathcal C$ is drawn in purple on the top figure. 
      The red path is one of the shortest paths from the root vertex to the marked edge,
      one of those that pass through the vertex represented with label $m-1$.
      Squeezing $\mathcal C$ yields the simple quadrangulation displayed on the bottom left,
      with a marking on the resulting edge at root distance $m$.
      The subquadrangulation $\mathcal Q_{\mathcal C}$ is represented on the bottom right, rooted at the purple edge,
      but there the labels (which we did not change) are the root distances shifted by $m-1$.
    }
    \label{fig:recursionk_g}
  \end{figure}
where $\Ks_\ell(g):=K_\ell(g,0)$ is the generating function of \emph{simple} rooted quadrangulations with an additional marked edge at root distance $\ell$, different from the root edge.
  Indeed, consider a quadrangulation $\cQ$ enumerated by $k_\ell(z,u)$.
  If it has no $2$-cycle, then it is counted by $\Ks_\ell(z)$ with the correct weight.
  Else, there is a unique maximal $2$-cycle $\mathcal C$,
  as otherwise we would obtain two maximal $2$-cycles whose interiors share a common edge, since
  they both must include the marked edge.
  Moreover, this unique maximal $2$-cycle $\mathcal C$ is necessarily a neck:
  supposing there is a third edge connecting the two vertices visited by $\mathcal C$,
  this edge cannot be equal to the marked edge which is assumed to be simple,
  hence it splits the interior of $\mathcal C$ into two domains delimited by $2$-cycles,
  one of which not containing the marked edge, which is not allowed.
  Squeezing the interior of the neck $\mathcal C$, we obtain a new marked edge in the root block of $\cQ$.
  Calling $m$ the root distance of that new marked edge,
  the root block is a quadrangulation enumerated by $\Ks_m(z)$.
  As for $\cQ_\mathcal C$, first remark that the root distance of the marked edge within $\cQ_\mathcal C$ is $\ell+1-m$,
  since we can find a geodesic in $\cQ$ from the root vertex to the marked edge that enters $\cQ_\mathcal C$ at its root vertex and stays in $\cQ_\mathcal C$ afterwards, see Figure~\ref{fig:recursionk_g} for an illustration.
  Also, since $\mathcal C$ is a neck, the root edge of $\cQ_\mathcal C$ is simple,
  so $\cQ_\mathcal C$ is enumerated by $k_{\ell+1-m}(z,u)$.
  The inner blocks of $\cQ$ are all the blocks of $\cQ_\mathcal C$,
  hence the additional weight $u$ in \eqref{eq:recursionk_g}
  to account for the root block of $\cQ_\mathcal C$.
  Note that for a quadrangulation counted by $k_\ell(z,u)$,
  its number of inner blocks is identical to its number of $2$-cycles, which are all necks.

  We rewrite the relation~\eqref{eq:recursionk_g} in the form
  \begin{equation}
    \hat k(t,z,u) = \left(1+u \hat k(t,z,u)\right) \hatKs(t,z)
    \label{eq:reckhat}
  \end{equation}
  by introducing the series
  \begin{equation}
    \hat k(t,z,u):=\sum\limits_{\ell\geq 1}t^{\ell-1} k_\ell(z,u)
    \label{eq:ktzudef}
  \end{equation}
  and noting that $\hatKs(t,z):=\sum\limits_{\ell\geq 1}t^{\ell-1}\Ks_\ell(z)$.
  The relation~\eqref{eq:reckhat} amounts to
  \begin{equation}
    \hat k(t,z,u) = \frac{\hatKs(t,z)}{1 - u\, \hatKs(t,z)}.
    \label{eq:hatk_to_hatKs}
  \end{equation}

  We now prove by substitution that
  \begin{equation}
    K_\ell(g,u) = (1 + u\, Q(g,u))^2 k_\ell(z(g,u),u), \qquad z(g,u):=g\left(1+u\,Q(g,u)\right)^2.
    \label{eq:substKl}
  \end{equation}
  We shall proceed similarly to Subsection~\ref{ssec:blockweightedquad}, but we only squeeze $2$-cycles which do not separate the root edge from the marked edge.
  More precisely, let us consider a quadrangulation $\cQ$ enumerated by $K_\ell$,
  and the map $\mathcal K$ obtained by inflating the root edge and the marked edge into $2$-gons.
  We say that a $2$-cycle $\mathcal C$ of $\mathcal K$ is \emph{non-separating}
  if the two $2$-gons lie on the same side of $\mathcal C$;
  the side containing none of the $2$-gons is called the \emph{inside} of $\mathcal C$.
  A non-separating $2$-cycle $\mathcal C$ is \emph{maximally non-separating}
  if it is not contained in the inside of another non-separating $2$-cycle.
  Squeezing the inside of all the maximally non-separating $2$-cycles of $\mathcal K$ and the two $2$-gons yields
  a quadrangulation $\mathfrak q$ with the same root and marked edges, enumerated by $k_\ell$.
  Each edge $e$ of $\mathfrak q$ different from the root and the marked edge
  corresponds to either an original edge of $\cQ$,
  or to a maximally non-separating $2$-cycle $\mathcal C_e$ whose inside is an arbitrary quadrangulation,
  which we use as a decoration of $e$ in $\mathfrak q$.
  This leads us to assign the weight $(1+ u\, Q(g,u))$ to each edge $e$ of $\mathfrak q$ different from the root and the marked edge.
  As for the marked edge and the root edge, instead of one (possibly empty) decoration,
  we must associate to each of them \emph{two} (possibly empty) decorations, one on each side of that edge.
  This yields in the generating function $K_\ell$ a total of four $(1 + u\, Q(g,u))$ factors instead of only two.
  Altogether, we are thus led to assigning the weight $g$ per face and $(1+ u\, Q(g,u))$ per edge,
  or equivalently the weight $z(g,u)=g\left(1+u\,Q(g,u)\right)^2$ per face of $\mathfrak q$,
  and to adding a global prefactor $(1+ u\, Q(g,u))^2$, which yields equation \eqref{eq:substKl}.

  The wanted equation \eqref{eq:substhatK} comes from summing \eqref{eq:substKl} over $\ell$ and using \eqref{eq:hatk_to_hatKs}. 
\end{proof}

The interest of Proposition~\ref{prop:hatKsubst} is that, combined
with a result from~\cite{quadwithnoME}, it gives an explicit
expression for $\hat K(t,g,u)$:

\begin{thm}
  \label{thm:hatKexpr}
  The distance-dependent two-point function of block-weighted planar
  quadrangulations is given by
  \begin{equation}
    \hat K(t,g,u) = \frac{z(g,u)}g \cdot \frac{\hat h(t,z(g,u))-1}{u + (1-u) \hat h (t, z(g,u))}, \qquad z(g,u):=g\left(1+u\,Q(g,u)\right)^2
    \label{eq:hatKexpr}
  \end{equation}
  with
  \begin{equation}
    \hat h(t,z) = r^2 \frac{(1-x^2)^2}{x^2} \sum_{k \geq 1} k x^{2k} \frac{1-x^{k}}{1-t x^{k}}
    \label{eq:hathexpr}
  \end{equation}
  where $r,x$ are the unique power series in $z$ satisfying
  \begin{equation}
    r = 1+z\, r^3, \qquad x+\frac 1 x + 1 = \frac 1 {z\, r^2}.
    \label{eq:rxzeqs}
  \end{equation}
\end{thm}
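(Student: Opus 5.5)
The plan is to combine Proposition~\ref{prop:hatKsubst} with an explicit formula for the two-point function $\hatKs(t,z)$ of simple quadrangulations taken from \cite{quadwithnoME}. Since $z/g=(1+uQ(g,u))^2$ by the definition of $z(g,u)$, the prefactor in \eqref{eq:substhatK} already has the required form. It therefore suffices to establish the identity
\begin{equation*}
  \hatKs(t,z) = \frac{\hat h(t,z)-1}{\hat h(t,z)} = 1-\frac{1}{\hat h(t,z)}
\end{equation*}
with $\hat h$ as in \eqref{eq:hathexpr}. Granting this, we get $1-u\hatKs = \bigl(u+(1-u)\hat h\bigr)/\hat h$. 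Dividing then gives $\hatKs/(1-u\hatKs) = (\hat h-1)/(u+(1-u)\hat h)$, which is exactly \eqref{eq:hatKexpr}.

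To prove the identity I would specialize Proposition~\ref{prop:hatKsubst} to $u=1$, i.e.\ to general quadrangulations. There $z(g,1)=g(1+Q(g,1))^2$, and \eqref{eq:substhatK} gives $\hat K(t,g,1)=\frac{z}{g}\,\hatKs(t,z)/(1-\hatKs(t,z))$. Inverting this relation yields $1/(1-\hatKs(t,z)) = 1+\frac{g}{z}\hat K(t,g,1)$. The left side is what we want to identify with $\hat h(t,z)$. The task is thus to compute $1+\frac{g}{z}\hat K(t,g,1)$ from the known explicit two-point function of general quadrangulations \eqref{eq:Rellexplic}, and to express it in terms of $z$.

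Concretely, $K_\ell(g,1)$ counts quadrangulations with a root vertex and a marked edge of type $(\ell-1,\ell)$. This is $R_\ell - R_{\ell-1}$ up to edge/vertex marking factors: in a quadrangulation each vertex at distance $\ell$ has edges to distance $\ell-1$, and counting via the well-labeled tree (Schaeffer) bijection gives a difference of $R_\ell$'s. The convention for $\ell=1$ in the definition of $K_\ell$ has to be handled here. Then I expand $1/(1-x^{\ell+i})$ geometrically, as in the footnote, and sum over $\ell$ with weight $t^{\ell-1}$. This produces a sum of the form $\sum_k c_k x^{2k}(1-x^k)/(1-tx^k)$.

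The final step is the change of parametrization. For general quadrangulations one has $R=1+3gR^2$ and $x+1/x+1=1/(gR^2)$. I would check that $r:=R/(1+Q(g,1))$ satisfies $r=1+zr^3$ and $zr^2=gR^2$, so that the same $x$ is obtained. This is plausible because $gR^2=zr^2$ forces $r^2=R^2/(1+Q)^2$, and the standard relation $1+Q=R(4-R)/3$-type formulas should confirm $r=1+zr^3$. I expect the main obstacle to be the bookkeeping: getting the exact coefficient $k\,r^2(1-x^2)^2/x^2$ and the constant $1$ to come out right, including the boundary term at $\ell=1$ and the consistency of $R$ with $1+Q$. Alternatively, if \cite{quadwithnoME} states $\hatKs$ directly in this form, I would simply cite it and verify the algebra.
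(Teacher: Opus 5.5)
Your algebraic reduction is correct and is the same as the paper's. By \eqref{eq:substKl} at $u=1$ one has $\frac{g}{z}\hat K(t,g,1)=\hat k(t,z,1)$, so your inversion gives $1/(1-\hatKs)=1+\hat k(t,z,1)$. The paper's $\hat h$ is exactly $1+\hat k(t,z,1)$; your ``constant $1$'' is the $\delta_{\ell,1}$ in $h_\ell=k_\ell(z,1)+\delta_{\ell,1}$. Your reparametrization also works: $1+Q(g,1)=R(4-R)/3=R-gR^3$, so $r=R/(1+Q)$ satisfies $r=1+zr^3$ and $zr^2=gR^2$.

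\textbf{The gap.} The step that fails is the claim that $K_\ell(g,1)$ is ``$R_\ell-R_{\ell-1}$ up to edge/vertex marking factors''. By the description accompanying \eqref{eq:Rellexplic}, $R_\ell-R_{\ell-1}$ counts quadrangulations with a marked \emph{vertex} and a marked edge at distance $\ell$. By contrast, $K_\ell(g,1)$ counts \emph{rooted} ones: the root is a corner, i.e.\ the origin together with a choice of one of its incident edges. Passing from one to the other weights each configuration by the degree of the origin, which depends on the configuration and is not a global factor. Concretely, $\sum_\ell(R_\ell-R_{\ell-1})=R=1+3g+18g^2+\cdots$. On the other hand, \eqref{eq:hatK1_combi} with $Q(g,1)=2g+9g^2+\cdots$ gives $\hat K(1,g,1)=2g+23g^2+\cdots$, so the coefficients are not in constant ratio.

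At fixed $\ell$ the mismatch is structural. Expanding a difference of $R_\ell$'s as in the footnote produces $\sum_k a_k/(1-tx^k)$ from simple poles in $x^\ell$. The factor $k$ in \eqref{eq:hathexpr} instead comes from the double poles $(1-x^{\ell+1})^{-2}(1-x^{\ell+2})^{-2}$ of \eqref{eq:klFromMinbus}, via $x^m/(1-x^m)^2=\sum_{k\ge 1}k\,x^{km}$. So this is not bookkeeping: the expansion you describe would produce the wrong function.

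\textbf{What is missing.} You need the corner-to-edge two-point function of general quadrangulations itself. In the labeled-tree picture with the root vertex as origin, the root edge corresponds to a corner of label $1$ and the marked edge to a corner of label $\ell$. One must therefore count trees with two marked corners, for instance by decomposing along the branch joining them. This is precisely the content of \cite[Equation~(2.29)]{quadwithnoME}.

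The paper's proof cites that explicit formula \eqref{eq:klFromMinbus} for $h_\ell$. It then applies the partial fraction decomposition \eqref{eq:kl_simpleelements}, sums over $\ell$ as in \eqref{eq:hhatmero}, and uses \eqref{eq:hatk_to_hatKs} at $u=1$ exactly as you do. Your fallback of citing \cite{quadwithnoME} is therefore the actual proof. Note that the reference gives $h_\ell$, not $\hatKs$ in closed form, so the summation step \eqref{eq:hhatmero} is still needed.
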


\begin{proof}
  From \cite[Equation (2.29)]{quadwithnoME}, we know that the series\footnote{The definition of $h_\ell(z)$ in \cite{quadwithnoME} includes an additional term $1$ for $\ell=1$,
  which makes the expression \eqref{eq:klFromMinbus} valid for all $\ell \geq 1$.}
$h_\ell(z) = k_\ell(z,1) + \delta_{\ell,1}$ admits the explicit expression
\begin{equation}
  h_\ell(z) = r^2 x^{\ell -1}\frac{(1-x)(1-x^2)^2(1-x^{2\ell+3})}{(1-x^{\ell+1})^2(1-x^{\ell+2})^2}
  \label{eq:klFromMinbus}
\end{equation}
with $r,x$ determined by~\eqref{eq:rxzeqs}.
Introducing the series $\hat h(t,z) := \sum\limits_{\ell\geq 1}t^{\ell-1}h_\ell(z)$ and 
using the partial fraction decomposition
\begin{equation}
  x^{\ell -1}\frac{(1-x)(1-x^{2\ell+3})}{(1-x^{\ell+1})^2(1-x^{\ell+2})^2}
  =\frac{1}{x^2}  \left( \frac{x^{\ell+1}}{(1-x^{\ell+1})^2} - \frac{x^{\ell+2}}{(1-x^{\ell+2})^2} \right),
  \label{eq:kl_simpleelements}
\end{equation}
we find from~\eqref{eq:klFromMinbus} that $\hat h(t,z)$ admits the expressions
\begin{equation}
  \begin{split}
    \hat{h}(t,z) &= r^2 \frac{(1-x^2)^2}{x^2} \sum_{\ell \geq 1} \left( \frac{x^{\ell+1}}{(1-x^{\ell+1})^2} - \frac{x^{\ell+2}}{(1-x^{\ell+2})^2} \right) t^{\ell-1} \\
                 &= r^2 \frac{(1-x^2)^2}{x^2} \sum_{\ell \geq 1} \sum_{k \geq 1} k \left( x^{k(\ell+1)} - x^{k(\ell+2)} \right) t^{\ell-1} \\
                 &= r^2 \frac{(1-x^2)^2}{x^2} \sum_{k \geq 1} k x^{2k} (1-x^k) \sum_{\ell \geq 1} (t x^{k})^{\ell-1} \\
                 &= r^2 \frac{(1-x^2)^2}{x^2} \sum_{k \geq 1} k x^{2k} \frac{1-x^{k}}{1-t x^{k}}
  \end{split}
  \label{eq:hhatmero}
\end{equation}
leading to \eqref{eq:hathexpr}. Note that all these computations make sense in $\C[[t,z]]$, since $x$ is a series in $z$ with no constant term.
Now, inverting~\eqref{eq:hatk_to_hatKs} at $u=1$ and noting that
$\hat h(t,z) = \hat k(t,z,1) + 1$, we may write
\begin{equation}
  \hatKs (t,z)=
  1 - \frac1{\hat h(t,z)}.
  \label{eq:hatKs_to_hath_u1}
\end{equation}
Plugging this expression into~\eqref{eq:substhatK}, the wanted result follows.
\end{proof}

\begin{rem}
  \label{rem:hath_t_eq_1}
  For later use, we record that, at $t=1$, \eqref{eq:hathexpr} simply evaluates to
  \begin{equation}
    \hat{h}(1,z) = r^2,
    \label{eq:hath_t_eq_1}
  \end{equation}
  which is also visible from the straightforward alternative expression:
  \begin{equation}
    \hat{h}(t,z) = r^2 \left( 1-(1-t)\frac{(1-x^2)^2}{x^2} \sum_{k \geq 1} k\frac{x^{3k}}{1-t x^{k}} \right).
    \label{eq:hhatmeroalt}
  \end{equation}
  By~\eqref{eq:hatKexpr}, we deduce that
  \begin{equation}
    \hat K(1,g,u) = \frac{z(g,u)}g \cdot \frac{r^2-1}{u + (1-u) r^2}
  \end{equation}
  with $r$ taken at $z=z(g,u)$.
  We may check that this expression is consistent with
  \begin{equation}
    \hat K(1,g,u) = 2g \frac{\partial Q}{\partial g}(g,u) - Q(g,u) - u \, Q(g,u)^2
    \label{eq:hatK1_combi}
  \end{equation}
  which follows from the combinatorial definition of $\hat K(1,g,u)$:
  it is the generating function of rooted quadrangulations with an additional marked edge,
  that is neither equal to the root edge
  nor has the same two endpoints.
\end{rem}

\begin{rem}
  The relations~\eqref{eq:reckhat} at $u=1$, and
  \eqref{eq:hatKs_to_hath_u1}, correspond precisely
  to~\cite[Equation~(2.35)]{quadwithnoME} where the series
  $\hat g(t,z)$ is the same as the series $\hatKs (t,z)$ here.
\end{rem}

To conclude this section, let us discuss an interesting variant of
the two-point function, namely the \emph{block two-point function} defined as
the generating function $L_\ell(g,u)$ of those quadrangulations $\cQ$
in the set enumerated by $K_\ell(g,u)$ whose marked edge lies in the root block of $\cQ$.

By arguments similar to the above discussion, we have the following relation:
\begin{equation}
  L_\ell(g,u) = \Ks_\ell (z(g,u))
  \label{eq:substLl}
\end{equation}
with $z(g,u)$ as before.
Note, when comparing with \eqref{eq:substKl}, the absence of the prefactor $(1+ u\, Q(g,u))^2$. 
Indeed, demanding that the marked edge belongs to the root block
forbids adding decorations on the right of the root edge or of the marked edge,
so that both edges belong to the same block.
Forbidding a decoration on the right of the root edge ensures that
the quadrangulation $\mathfrak q$ in the above discussion is the root block;
forbidding a decoration on the right of the marked edge ensures that
this edge belongs to $\mathfrak q$.

Using \eqref{eq:hatKs_to_hath_u1} and \eqref{eq:substLl}, we get the following counterpart of Theorem~\ref{thm:hatKexpr}:
\begin{thm}
  \label{thm:hatLexpr}
  With the above notations, 
  the distance-dependent block two-point function
  $\hat L(t,g,u) := \sum\limits_{\ell\geq 1}t^{\ell-1} L_\ell(g,u)$
reads explicitly
  \begin{equation}
    \hat L(t,g,u) = 1- \frac{1}{\hat h(t,z(g,u))}, \qquad z(g,u):=g\left(1+u\,Q(g,u)\right)^2.
    \label{eq:substhatL}
  \end{equation}
\end{thm}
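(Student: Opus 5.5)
The identity follows from the substitution relation~\eqref{eq:substLl}, $L_\ell(g,u)=\Ks_\ell(z(g,u))$, combined with the closed form~\eqref{eq:hatKs_to_hath_u1} for $\hatKs$. The real work is to justify~\eqref{eq:substLl} combinatorially; afterwards it is a formal substitution.

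First I establish~\eqref{eq:substLl}. Take $\cQ$ counted by $L_\ell(g,u)$ and run the construction from the proof of Proposition~\ref{prop:hatKsubst}. Inflate the root edge and the marked edge into $2$-gons, then squeeze the insides of all maximally non-separating $2$-cycles. This produces a quadrangulation $\mathfrak q$ counted by $k_\ell$, each of whose edges carries decorations.

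Next I use the hypothesis that the marked edge lies in the root block. An edge belongs to the block of the face on its right. So the decoration to the right of the root edge must be empty, since otherwise the root block would be a nested subquadrangulation. Likewise the decoration to the right of the marked edge must be empty.

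I then argue that $\mathfrak q$ contains no $2$-cycle at all. Any $2$-cycle of $\mathfrak q$ would separate the root edge from the marked edge. By the definition of $k_\ell$, all such $2$-cycles contain the marked edge in their interior, which would put the marked edge in an inner block. Hence $\mathfrak q$ is simple and equals the root block, and its generating function is $\Ks_\ell$ rather than $k_\ell$.

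I expect the main care to be needed in this step: checking that the conventions (``face on its right'', orientation away from the root, the rooting of subquadrangulations) indeed rule out exactly the left-over decorations, and that the remaining ones are unrestricted. The remaining decorations sit on the other edges of $\mathfrak q$, and on the left sides of the root and marked edges. For the left sides, I would double-check that these have been absorbed correctly, so that the total count is two factors $(1+uQ)$ per face and no prefactor. Each non-special edge gets weight $(1+uQ)$, so each face gets weight $g(1+uQ)^2=z$. The two surviving one-sided decorations on the root and marked edges account for their edge weights exactly as ordinary edges do, which is why the prefactor $(1+uQ)^2$ of~\eqref{eq:substKl} disappears. This gives $L_\ell(g,u)=\Ks_\ell(z(g,u))$.

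Finally I multiply by $t^{\ell-1}$ and sum over $\ell\ge1$ to get $\hat L(t,g,u)=\hatKs(t,z(g,u))$. Substituting $z=z(g,u)$ into~\eqref{eq:hatKs_to_hath_u1}, $\hatKs(t,z)=1-1/\hat h(t,z)$, yields~\eqref{eq:substhatL}. The substitution is legitimate in $\C[[t,g]]$ because $z(g,u)$ has no constant term in $g$, and because $\hat h(t,z)$ has constant term $1$ (from the $\delta_{\ell,1}$ term), so $1/\hat h$ is a well-defined formal power series.
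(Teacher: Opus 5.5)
Your proposal is correct and follows the paper's route. You obtain $L_\ell(g,u)=\Ks_\ell(z(g,u))$ from the decomposition used for $K_\ell$ by forbidding the decorations on the right of the root edge and of the marked edge (which removes the prefactor $(1+u\,Q)^2$), then sum over $\ell$ and substitute into $\hatKs=1-1/\hat h$. Your explicit check that $\mathfrak q$ has no $2$-cycle (any such cycle would be separating and would put the marked edge in an inner block) is what turns $k_\ell$ into $\Ks_\ell$, and it spells out a point the paper only asserts in passing when it says that $\mathfrak q$ ``is the root block''.
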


\begin{rem}
  Let us note that we have
  \begin{equation}
    \hat L(1,g,0) = \hatKs (1,g) = \left(2g \frac \partial {\partial g} - 1\right) \Qs(g).
    \label{eq:edgeMarkingSimple}
  \end{equation}
  Indeed, if $u=0$ then all our quadrangulations are simple.
  If by setting $t=1$ we do not restrict the root distance of the marked edge,
  then the quadrangulations enumerated by $\hatKs (1,g)$ are nothing but
  the simple quadrangulations with a marked edge different from the root edge,
  in number $2n-1$ if the quadrangulation has $n$ faces.
\end{rem}

\section{Asymptotic enumeration}
\label{sec:asymptQ}

From now on, we fix the block weight $u$ to a nonnegative real value,
and we denote by $Q_n$ the coefficient of $g^n$ in $Q(g,u)$.  Here and
in several notations below, we leave the dependency in $u$ implicit
for brevity. Note that $Q_n$ is a nonnegative real number, since
$Q(g,u)$ is from its very definition~\eqref{eq:Qgudef} a series in $g$
whose coefficients are polynomials in $u$ with nonnegative integer coefficients.
Our purpose is to work out
the asymptotics of $Q_n$ by applying the saddle-point method in a
fully justified manner. Even though the results here could also be
obtained by a routine application of so-called sim-transfer
\cite[Corollary VI.1]{Flajolet2009}, we will later go outside the
domain of validity of this result, and our bottom-up approach will
prove more versatile.
Precisely, we will prove in this section the following result, already found in \cite{ZSPhD}:
\begin{prop}
  \label{prop:Qn_asym}
  The asymptotic behavior of $Q_n$ for $n \to \infty$ is as follows:
  \begin{itemize}
  \item in the subcritical phase $0 \leq u < 9/5$, we have
    \begin{equation} \label{eq:Qn_asymsub}
      Q_n \sim \frac{2}{\sqrt{\pi}} \left( \frac{3+u}{9-5u} \right)^{5/2} \left( \frac{3(3+u)^2}4 \right)^n n^{-5/2},
    \end{equation}
  \item at the critical point $u=9/5$, we have
    \begin{equation} \label{eq:Qn_asymcrit}
      Q_n \sim \frac{2^{7/3}}{9\, \Gamma(1/3)} \left(\frac{432}{25}\right)^n n^{-5/3},
    \end{equation}
  \item in the supercritical phase $u>9/5$, we have:
    \begin{equation} \label{eq:Qn_asymsuper}
      Q_n \sim \frac 2 9 \ys \sqrt{\frac3\pi(1-\ys)(3-\ys)} \left( \frac{432}{\ys(6-\ys)^2} \right)^n n^{-3/2}
    \end{equation}
    where $\ys =  3\left(1- \sqrt{\frac{u-1}u}\right)$.
  \end{itemize}
\end{prop}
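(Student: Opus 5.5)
The plan is to write $Q_n=\frac{1}{2\pi i}\oint Q(g,u)\,g^{-n-1}\,dg$ and then change variables to a rational uniformizing parameter, so that the integrand becomes an explicit rational function times $g(y)^{-n}$. The asymptotics then follow from a local analysis at one dominant point of the new contour.

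\textbf{Step 1: rational parametrization.} From~\eqref{eq:rxzeqs}, $z=(r-1)/r^3$. Proposition~\ref{prop:Qsimplesubstitutionrelation} gives $g=z/(1+u\,Q)^2$ together with $Q=\Qs(z)$. I would use the classical rational parametrization of rooted simple quadrangulations (equivalently, non-separable maps) of~\eqref{eq:Qg10}, in which $\Qs$ is a rational function of $r$. I would reparametrize by a variable $y$, an affine function of $r$ chosen to match the notation $\ys$ of the statement, normalized so that $y=1$ corresponds to $r=3/2$, $z=4/27$. The form of the growth constants suggests
\[
g(y)=\frac{y(6-y)^2}{432}\cdot(\text{correction depending on }u),
\]
but the exact formula is an output of the computation. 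Concretely, I would expect $g=z(y)/(1+u\,\Qs(z(y)))^2$, with both $g$ and $Q$ rational in $y$. On a neighbourhood of $[0,1]$ the map $y\mapsto z$ is increasing, $z$ has a square-root singularity at $y=1$, and $\Qs$ has its $z^{3/2}$-type singularity there, which is regular in $y$.

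\textbf{Step 2: locate the dominant singularity $\rho(u)$ of $Q(\cdot,u)$.} As $y$ increases from $0$, two things can stop the inversion $y\mapsto g$. Either $y$ reaches $1$ first, which is the subcritical case. Or $g'(y)$ vanishes at some $y=\ys<1$ first, which is the supercritical case. Solving $g'(\ys)=0$ should give the stated $\ys=3(1-\sqrt{(u-1)/u})$. The two events coincide exactly when $\ys=1$, which yields $u=9/5$.

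\textbf{Step 3: local singular expansions.}
\begin{itemize}
\item Supercritical case: $g-\rho\sim c\,(y-\ys)^2$, so $Q$ has a square-root singularity, giving the factor $n^{-3/2}$.
\item Subcritical case: $g$ is regular with $g'(1)\ne0$ at the boundary point. The $z^{3/2}$ singularity of $\Qs$ transfers to $Q$, giving $n^{-5/2}$.
\item Critical case: $g'(1)=0$ and the $3/2$ singularity interacts with the quadratic term. This produces $Q-Q(\rho)\sim c\,(\rho-g)^{2/3}$ and hence $n^{-5/3}$, with the factor $\Gamma(1/3)$ coming from $1/\Gamma(-2/3)$.
\end{itemize}

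\textbf{Step 4: contour deformation and error bounds.} In the integral I substitute $g=g(y)$, so that $dg=g'(y)\,dy$, and deform the circle into a contour in the $y$-plane.
\begin{itemize}
\item Supercritical case: a steepest-descent path through the saddle $\ys$ (a genuine saddle point of $g^{-n}$), evaluated by Laplace's method.
\item Subcritical case: a Hankel-type contour around the branch point. In the $y$ variable this is a saddle-type integral with the singularity at an endpoint, after rewriting $\sqrt{\,}$ in terms of $y$.
\item Critical case: a contour through $y=1$ along the directions where $|g(y)|>\rho$ grows fastest. Since $g-\rho\sim c\,(y-1)^3$, these are rays at angles $\pm\pi/3$, and the local integral produces the Gamma factor.
\end{itemize}
In each case I would compute the constants, then verify that $|g(y)|>\rho$ strictly on the rest of the contour, and that the contour stays inside the region where the parametrization is a valid uniformization, so that the remainder is exponentially smaller.

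I expect Step 4 in the critical case to be the main obstacle. There one must exhibit an explicit global contour, leaving $y=1$ at angles $\pm\pi/3$ and closing around $0$, on which $|g(y)|\ge\rho$ with equality only at $y=1$. This must hold despite the branching of $z(y)$, which is what dictates the ``slightly more involved contour''. I would first try straight rays from $1$ at angles $\pm\pi/3$, joined by a large arc or circle, and check the modulus inequality directly from the explicit rational form of $g$.
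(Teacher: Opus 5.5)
Your plan follows the paper's route: the rational parametrization $g(y)=\frac{y(3-y)^2}{3(3+uy(2-y))^2}$ with $Q=\frac{y(2-y)}{3}$, substitution into the Cauchy integral, the saddle points $1$ and $3(1-\sqrt{(u-1)/u})$, and Laplace's method through $\ys$. However, the critical-case contour you propose, which you rightly single out as the crux, would fail. \textbf{First, the departure angles are wrong.} Since $g$ increases along $[0,1]$ up to $\rho=g(1)$, the coefficient in $g-\rho\sim c\,(y-1)^3$ is positive; indeed $\ln\frac{g(y)}{g(1)}\sim\frac{(y-1)^3}{4}$. So $|g|$ grows fastest along $\arg(y-1)\in\{0,\pm2\pi/3\}$. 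On the rays $\arg(y-1)=\pm\pi/3$ one has $(y-1)^3<0$, hence $|g|<\rho$, and $|g|^{-n}$ is exponentially larger than $\rho^{-n}$: these are steepest-ascent directions of the integrand. \textbf{Second, no large arc can close the contour.} In the paper's parametrization $g(y)=O(1/y)$ at infinity (numerator of degree $3$, denominator of degree $4$), so the region $\{|g|\geq\rho\}$ is bounded; at $u=9/5$ one already has $|g(2i)|<\rho$. What is needed is a bounded closed curve leaving $1$ at $\arg(y-1)=\pm2\pi/3$, toward the origin, with $|g|$ monotone on each half. The paper takes the Ceva trisectrix $y(\theta)=1-\frac{e^{i\theta}+e^{-i\theta}+e^{3i\theta}}{2}$, $\theta\in[-\pi/3,\pi/3]$, which passes through $-1/2$, and checks the monotonicity by computer algebra (Lemma~\ref{lem:cevagood}).

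Your diagnosis of the difficulty is also off. There is no branching to respect. $z(y)$ is rational in $y$; it is $y(z)$ that has the square-root singularity, because $z'(1)=0$. After one integration by parts, $Q_n=\frac{1}{3i\pi n}\oint\frac{1-y}{g(y)^n}\,dy$ has a rational integrand with poles only at $y=0$ and $y=3$. Hence any contour in $\C\setminus\{0,3\}$ winding once around $0$ and not around $3$ is allowed. It need not stay where the parametrization uniformizes $Q$; in fact the contour you want lies in $\{|g|\geq\rho\}$, outside the disk of convergence. The special contour is needed only because $1$ is a monkey saddle. The circle $|y|=1$ still satisfies $|g|\geq\rho$ with equality only at $1$ (Lemma~\ref{lem:circlegood} covers $u=9/5$). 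But it crosses $1$ vertically, where the cubic term is purely imaginary, so the integrand decays too slowly. Likewise, in the subcritical case there is no branch point at $y=1$ in the $y$-plane. The point you do not flag is that the amplitude vanishes at the saddle ($g'(1)=0$ in the raw integrand, $1-y$ after one integration by parts), so the leading Laplace term is zero. The paper integrates by parts a second time to get the amplitude $f=\frac{d}{dy}\frac{1-y}{g'(y)}$, with $f(1)=\bigl(\frac{3(3+u)^2}{2(9-5u)}\bigr)^2\neq0$. Otherwise you must push the expansion to higher order to obtain the constant.
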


Our starting point to establish this proposition is the contour integral representation
\begin{equation}
  Q_n = \frac{1}{2i\pi} \oint \frac{Q(g,u) dg}{g^{n+1}}
  \label{eq:Qnintbasic}
\end{equation}
where the contour of integration is initially a small circle around
the origin. Incidentally, note that this requires viewing $Q(g,u)$ as
an analytic function of $g$, instead of a formal power series as was
done in the previous section. It is well-known and apparent
from~\eqref{eq:Qg10} that $Q(g,1)$ has radius of convergence
$\frac{1}{12}$. Hence, $Q(g,u)$ has radius of convergence at least
$\frac1{12}$ for $u\leq 1$, and at least $\frac1{12u}$ for $u \geq 1$
by the naive bound $b(\cQ) \leq f(\cQ)$ holding for any rooted planar
quadrangulation $\cQ$. So, fortunately, the above contour integral makes sense for
any $u \geq 0$.

\subsection{Rewriting the contour integral via rational parametrization}
\label{sec:ratpar}

To analyze the large $n$ behavior of the above integral, it is first
convenient to perform a change of variable, using the following
rational parametrization:

\begin{prop}
  \label{prop:Qratparam}
  Consider the rational function
  \begin{equation}
    g(y):= \frac{y(3-y)^2}{3
      \left(3 + u y(2-y)\right)^2}.
    \label{eq:gydef}
  \end{equation}
  Then, the generating function $Q(g,u)$ of block-weighted planar quadrangulations 
  satisfies
  \begin{equation} \label{eq:Qgyu}
    Q(g(y),u) = \frac{y(2-y)}{3}.
  \end{equation}
\end{prop}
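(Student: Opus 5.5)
The plan is to reduce the statement to a rational parametrization of the simple series $\Qs$, and then transfer it to general $u$ using the substitution relation of Proposition~\ref{prop:Qsimplesubstitutionrelation}. The guiding observation is that, for $g=g(y)$ and the candidate value $Q=y(2-y)/3$, we have $1+uQ=\frac{3+uy(2-y)}{3}$. Hence
\[
z = g(y)\left(1+u\,\tfrac{y(2-y)}{3}\right)^2 = \frac{y(3-y)^2}{27},
\]
which is independent of $u$. The whole $u$-dependence of \eqref{eq:gydef} is therefore produced by the factor $(1+uQ)^2$. It thus suffices to prove the $u=0$ identity
\begin{equation*}
  \Qs\!\left(\frac{y(3-y)^2}{27}\right)=\frac{y(2-y)}{3},
\end{equation*}
and then to argue by uniqueness.

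\textbf{Step 1 (the simple case).} Set $z=y\,/\,\varphi(y)^{-1}$ with $\varphi(y)=27/(3-y)^2$, i.e.\ $y=z\,\varphi(y)$, which defines $y$ as a power series in $z$ with $y=3z+O(z^2)$. Let $H(y)=y(2-y)/3$, so $H'(y)=\frac{2(1-y)}{3}$. Lagrange inversion gives, for $n\ge1$,
\[
[z^n]H(y(z))=\frac1n\,[y^{n-1}]\,H'(y)\,\varphi(y)^n
=\frac{2\cdot 27^n}{3n}\,[y^{n-1}]\,(1-y)(3-y)^{-2n}.
\]
Expanding $(3-y)^{-2n}=3^{-2n}\sum_k 3^{-k}\binom{2n+k-1}{k}y^k$ turns the coefficient into a combination of two binomials. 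One then simplifies this to $\frac{2(3n-3)!}{n!(2n-1)!}$ and compares with \eqref{eq:Qg10}. As a sanity check, the first terms come out as $2z+z^2+\dots$, matching $n=1,2$. This simplification is the only computational step, and I expect it to be the main (routine) obstacle. It collapses because $\binom{3n-2}{n-1}$ and $\binom{3n-3}{n-2}$ combine with the factors $3^{-k}$ and $27^n$.

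Alternatively, one could cite the classical parametrization of rooted non-separable maps (Tutte), which by the bijection recalled after \eqref{eq:Qg10} is exactly $\Qs$.

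\textbf{Step 2 (general $u$).} Since $g(y)=y/3+O(y^2)$ with nonzero linear coefficient, there is a unique power series $y(g)$ with $y(0)=0$ and $g(y(g))=g$. Define $P(g):=y(g)(2-y(g))/3$. By the computation above, $g(1+uP)^2=\frac{y(3-y)^2}{27}$, so Step 1 yields $P=\Qs\bigl(g(1+uP)^2\bigr)$.

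On the other hand, Proposition~\ref{prop:Qsimplesubstitutionrelation} says that $Q(g,u)$ satisfies the same equation $Q=\Qs\bigl(g(1+uQ)^2\bigr)$. This equation has a unique solution in $g\,\C[[g]]$. Indeed, because the argument of $\Qs$ carries a factor $g$ and $\Qs$ has no constant term, the coefficient of $g^n$ on the right-hand side depends only on the coefficients of $Q$ of order $<n$. The map $Q\mapsto\Qs(g(1+uQ)^2)$ is thus a contraction for the $g$-adic topology, and the coefficients are determined inductively. Hence $P=Q(g,u)$ as formal series in $g$, which is \eqref{eq:Qgyu}.

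For the analytic use in \eqref{eq:Qnintbasic}, the identity then holds as analytic functions near $y=0$ (equivalently near $g=0$) by convergence of both sides, and it extends by analytic continuation wherever needed later.
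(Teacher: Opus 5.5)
Your proof is correct and follows essentially the paper's route. You combine the $u=0$ parametrization $\Qs\bigl(y(3-y)^2/27\bigr)=y(2-y)/3$ with Proposition~\ref{prop:Qsimplesubstitutionrelation}, the key point being that $g(y)\bigl(1+u\,y(2-y)/3\bigr)^2=y(3-y)^2/27$ is independent of $u$.

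The differences are minor. First, you derive the simple case by Lagrange inversion from \eqref{eq:Qg10} rather than citing it from the literature. Your coefficient does reduce to $\frac{2}{n}\bigl[\binom{3n-2}{n-1}-3\binom{3n-3}{n-2}\bigr]=\frac{2(3n-3)!}{n!(2n-1)!}$, though $z=y/\varphi(y)^{-1}$ is a slip for $z=y/\varphi(y)$. Second, you close the argument by uniqueness of the fixed point of $Q=\Qs\bigl(g(1+uQ)^2\bigr)$, whereas the paper substitutes $z(g,u)$ into $y(z)$ and uses uniqueness of the compositional inverse of $g(y)$.
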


\begin{proof}
  From~\cite[Proposition~2]{BFSS01} and the bijection between rooted
  planar non-separable maps and rooted planar simple quadrangulations,
  it is known that the series $\Qs(z)$ is given by
  \begin{equation}
    \Qs(z) = \frac{y(z)(2-y(z))}{3}
    \label{eq:Qsparam}
  \end{equation}
  where $y(z)$ is the unique series in $z$ such that
  \begin{equation}
    z = \frac{y(z)(3-y(z))^2}{27}, \qquad y(0)=0.
    \label{eq:zparam}
  \end{equation}
  Let us replace in these relations $z$ by
  $z(g,u):=g \left(1+u\,Q(g,u)\right)^2$. Note that $\Qs(z(g,u))$ and
  $y(z(g,u))$ are well-defined bivariate series, since $z(g,u)$ has
  no constant coefficient. Writing $y$ as a shorthand
  notation for $y(z(g,u))$, \eqref{eq:zparam} yields
    $z(g,u) = \frac{y(3-y)^2}{27}$
  while \eqref{eq:Qsimplesubstitutionrelation} and \eqref{eq:Qsparam}
  yield $Q(g,u) = \frac{y(2-y)}{3}$. From these two relations we may
  write
  \begin{equation}
    g = \frac{z(g,u)}{\left(1+u\, Q(g,u)\right)^{2}}=\frac{\frac{y(3-y)^2}{27}}{
      \left(1 + u \frac{y(2-y)}3\right)^2} = g(y).
  \end{equation}
  Since $y$ is a series in $g$ with zero constant coefficient,
  the equation $g=g(y)$ uniquely determines it.
  We get the statement of the proposition by reversely viewing
  $g=g(y)$ as a series in the variable $y$, which we substitute in
  $Q(g,u)$.
\end{proof}

\begin{rem}
  \label{rem:zy_param}
  For later use, we note that $z(g,u)= g \left(1+u\,Q(g,u)\right)^2$ admits, by the above arguments,
  the rational parametrization
  \begin{equation}
    z(g(y),u)=z(y), \qquad z(y):=\frac{y(3-y)^2}{27}
    \label{eq:zgy}
  \end{equation}
  which is independent of $u$.
\end{rem}

Proposition~\ref{prop:Qratparam} allows us to perform the change of
variable $g=g(y)$ in the contour integral
representation~\eqref{eq:Qnintbasic} of $Q_n(u)$, to yield
\begin{equation}
  Q_n = \frac{1}{2i\pi} \oint \frac{y(2-y)}{3}  \frac{g'(y) dy}{g(y)^{n+1}}.  
  \label{eq:Qnintylong}
\end{equation}
Assuming $n>0$, we integrate by parts to get the simpler expression
\begin{equation}
  Q_n = \frac{1}{3i\pi n} \oint \frac{1-y}{g(y)^n} dy.
  \label{eq:Qnintysimp}
\end{equation}
Again, the initial contour of integration is a small circle around the
origin, but since the integrand is a rational function with poles only
at $y=0,3$, we are free to deform the integration contour as long as
we do not cross these poles.

\subsection{Saddle-point approximation}
\label{ssec:saddle}
\subsubsection{Preliminaries}
\label{sssec:basicssaddle}

We will now evaluate the large $n$ behavior of $Q_n$ by applying the
saddle-point method to the right-hand side of
equation~\eqref{eq:Qnintysimp}.  Generally speaking, the method
consists in deforming the integration contour so that it passes
through a \emph{saddle point} of $g$, that is a zero of $g'$. When the
deformation is done appropriately, it is possible to show that the
large $n$ behavior of the integral is dominated by the vicinity of
the saddle point, whose contribution can be computed explicitly.

Here, the saddle points of $g$ are the roots of
\begin{equation}
  g'(y)=\frac{(3-y)(1-y)(uy^2-6uy+9)}{3\left(3 + u y(2-y)\right)^3}.
  \label{eq:gprime_y_eq_0}
\end{equation}
There are four such roots, namely $y=1$,
$y=3$ and $y=3\left(1\pm \sqrt{\frac{u-1}u}\right)$. The smallest real
root, which we will show to be the relevant one, is
\begin{equation}
  \ys := \begin{cases}
    1 & \text{for $u \leq 9/5$,}\\
    3\left(1- \sqrt{\frac{u-1}u}\right) & \text{for $u \geq 9/5$.}\\
  \end{cases}
  \label{eq:ysaddle}
\end{equation}
Note that the two determinations coincide at the critical point $u=9/5$.

\begin{rem} \label{rem:uysinv} In the supercritical phase $u>9/5$, $u$ may be
  conversely expressed in terms of $\ys$ as
  \begin{equation}
    u=\frac9{\ys(6-\ys)}
    \label{eq:uysinv}
  \end{equation}
  which decreases from $+\infty$ to $9/5$ as $\ys$ increases from $0$
  to $1$.  This relation will be used below to recast expressions
  depending on $u$ and $\ys$ as functions of $\ys$ only.
\end{rem}

Let us observe that
\begin{equation}
  g\left(\ys\right) = \begin{cases}
    \displaystyle\frac{4}{3(3+u)^2} & \text{for $u < 9/5$,}\\
    \displaystyle\frac{25}{432} & \text{for $u = 9/5$,}\\
    \displaystyle\frac{\ys\left(6-\ys\right)^2}{432} & \text{for $u > 9/5$}\\
  \end{cases}
  \label{eq:gysaddle}
\end{equation}
where the last line follows from a first application of
Remark~\ref{rem:uysinv}. Anticipating that the
integral~\eqref{eq:Qnintysimp} is indeed dominated by the vicinity of
$\ys$, we expect to have
\begin{equation}
  \lim_{n \to \infty} Q_n^{1/n} = g\left(\ys\right)^{-1}.
\end{equation}
This explains the exponential factors in Proposition~\ref{prop:Qn_asym}.

\subsubsection{The subcritical and supercritical cases}
\label{sssec:noncritQ}

The purpose of this subsection is to establish
Proposition~\ref{prop:Qn_asym} for $u \neq 9/5$, that is when $\ys$ is
a simple root of $g'$. This corresponds to the most generic situation
for saddle-point approximation:

\begin{prop}
  \label{prop:genericsaddle}
  Consider an integral of the form
  \begin{equation}
    I_n = \oint \frac{f(y)}{g(y)^n} dy
    \label{eq:genIn}
  \end{equation}
  taken over a small circle around the origin, with $f$ analytic over
  a domain containing the closed disk $\{|y|\leq y_s\}$. If
  $u \neq 9/5$ and $f(\ys)\neq 0$ then we have for $n \to \infty$
  \begin{equation}
    I_n \sim i f(\ys) \sqrt{\frac{2\pi g(\ys)}{-g''(\ys)}}\, g(\ys)^{-n}\,  n^{-1/2}.
    \label{eq:genInasy}
  \end{equation}
\end{prop}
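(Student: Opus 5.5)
We prove this with the standard saddle-point method: deform the small circle into a circle through $\ys$, show that $|g|$ is minimal on that circle exactly at $\ys$, and read off the Gaussian contribution there.

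\textbf{Step 1: the contour.} The integrand $f(y)/g(y)^n$ has singularities in $\{|y|\le\ys\}$ only at the zero $y=0$ of $g$. Indeed, the numerator of $g$ vanishes only at $0$ and $3$, while the roots of $3+uy(2-y)$ are $1\pm\sqrt{1+3/u}$, of modulus greater than $\ys$. Hence we may replace the small circle by the circle $\{|y|=\ys\}$, with $f$ analytic on a neighborhood of it.

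\textbf{Step 2: $\ys$ is the unique minimum of $|g|$ on the circle.} Writing $g(y)=\frac{y(3-y)^2}{3(3+uy(2-y))^2}$, we compare $|g(\ys e^{i\theta})|$ with $g(\ys)$ for $\theta\neq0$. We expect this to be the main obstacle, and we would try two routes.

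\emph{First route, via a positive expansion.} Expand $1/g(y)$ in Laurent form around $0$. Set $v=3+uy(2-y)$ and use the parametrization of Proposition~\ref{prop:Qratparam}. If $1/g$, multiplied by $y$, were a power series with nonnegative coefficients, the triangle inequality would give $|1/g|\le 1/g(\ys)$ on the circle.

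\emph{Second route, direct.} Compute
\begin{equation*}
  \frac{|g(\ys e^{i\theta})|^{2}}{g(\ys)^{2}}
\end{equation*}
as a rational function of $c=\cos\theta$. Its numerator and denominator are polynomials in $c$ because $|y|$ is fixed. We then show that the ratio exceeds $1$ for $c<1$ by factoring out $(1-c)$. It remains to check that the leftover polynomial is positive on $[-1,1]$, separately in the regimes $u<9/5$ (where $\ys=1$) and $u>9/5$ (where $u=9/(\ys(6-\ys))$ by Remark~\ref{rem:uysinv}, which reduces everything to the single parameter $\ys\in(0,1)$). The possible obstruction is a secondary minimum near $\theta=\pi$, which we handle by the explicit sign check.

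\textbf{Step 3: local Gaussian analysis.} Near $\ys$ we have $g'(\ys)=0$ and $g''(\ys)\neq0$, since $\ys$ is a simple root of $g'$ when $u\neq 9/5$. Moreover $g''(\ys)<0$, because $g$ increases on $(0,\ys)$ and $\ys$ is a local maximum along the real axis. Parametrize $y=\ys e^{i\theta}$, so that $dy=i\ys e^{i\theta}d\theta$, and write
\begin{equation*}
  \log g(y)=\log g(\ys)-\frac{g''(\ys)\ys^2}{2g(\ys)}\,\theta^2+O(\theta^3),
\end{equation*}
where we used $(y-\ys)^2\approx-\ys^2\theta^2$. Thus $g^{-n}$ behaves like $g(\ys)^{-n}\exp\bigl(-n\frac{-g''(\ys)\ys^2}{2g(\ys)}\theta^2\bigr)$ on $|\theta|\le n^{-2/5}$, where the cubic error is negligible. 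The Laplace method then gives
\begin{equation*}
  I_n\sim i\ys f(\ys)\,g(\ys)^{-n}\int_{\R} e^{-n\frac{-g''(\ys)\ys^2}{2g(\ys)}\theta^2}\,d\theta
  = i f(\ys)\sqrt{\frac{2\pi g(\ys)}{-g''(\ys)}}\,g(\ys)^{-n}n^{-1/2}.
\end{equation*}

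\textbf{Step 4: the tails.} On the rest of the circle, $|\theta|\ge n^{-2/5}$, Step 2 together with the local expansion gives $|g|\ge g(\ys)(1+c\,n^{-4/5})$ for some $c>0$. The tail contribution is therefore $O\bigl(g(\ys)^{-n}e^{-c'n^{1/5}}\bigr)$, which is negligible against the main term since $f(\ys)\neq0$.
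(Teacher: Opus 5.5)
Your proposal is essentially the paper's proof: deform to the circle $|y|=\ys$, show that $|g|$ has a strict global minimum at $\ys$ on it by an explicit rational-function sign check in each regime, then apply Laplace's method on an arc $|\arg y|<n^{\epsilon-1/2}$ with exponentially small tails. (The paper writes $y=\ys(1+it)^2/(1+t^2)$ and shows $|g|$ increases with $t^2$, which is equivalent to your variable $\cos\theta$, and it reduces the supercritical case to the parameter $\ys\in(0,1)$ as you suggest.) Two slips, neither affecting the argument: your first route in Step 2 only covers $u\le 1$, since $y/g(y)=3\bigl(u(1+y)+\frac{1-u}{1-y/3}\bigr)^2$ has $y^3$-coefficient $\frac{4}{9}(1-u^2)<0$ for $u>1$; and in Step 1 the root $1-\sqrt{1+3/u}$ of $3+uy(2-y)$ has modulus at most $\ys$ once $u\ge 1$ (it equals $-1$ at $u=1$), which is harmless because poles of $g$ are zeros of the integrand $f/g^n$.
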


Before establishing this proposition, let us first apply it to the
asymptotics of $Q_n$. The supercritical case $u>9/5$ is
straightforward: in view of~\eqref{eq:Qnintysimp} we just need to
apply the proposition with $f(y)=1-y$. Using Remark~\ref{rem:uysinv}
we find
\begin{equation}
  g''(\ys) = - \frac{(1-\ys)(6-\ys)^2}{288\ys(3-\ys)}
  \label{eq:gsecond_ys_supercrit}
\end{equation}
which, together with~\eqref{eq:gysaddle}, leads directly to the wanted
asymptotics~\eqref{eq:Qn_asymsuper}.

The subcritical case $u<9/5$ adds a slight difficulty, since the
integrand in~\eqref{eq:Qnintysimp} vanishes at the saddle point
$\ys=1$. We may circumvent this issue by performing an extra
integration by parts, to yield
\begin{equation}
  Q_n = \frac{1}{3 i \pi n (n-1)} \oint \frac{f(y)}{g(y)^{n-1}}dy, \qquad f(y) := \frac{d}{dy} \left ( \frac{1-y}{g'(y)} \right ).
  \label{eq:Qn_doubleintbyparts}
\end{equation}
We then get the wanted asymptotics~\eqref{eq:Qn_asymsub} using the
explicit values~\eqref{eq:gysaddle} and
\begin{equation}
  f(\ys)=f(1)=\left ( \frac{3(3+u)^2}{2(9-5u)} \right )^2, \qquad
  g''(\ys) = g''(1) = - \frac{2(9-5u)}{3(3+u)^3}.
  \label{eq:values_subcrit}
\end{equation}

Let us now establish Proposition~\ref{prop:genericsaddle}. As already
mentioned, the saddle-point method consists in deforming the
integration contour in $I_n$ so that it passes through a saddle point
of $g$. A natural choice for such a contour is the circle of radius
$\ys$ centered at the origin (recall that $\ys$ is the smallest real
saddle point), see Figure~\ref{fig:circleArcs}. The following lemma ensures that the vicinity of $\ys$
indeed dominates the large $n$ behavior of $I_n$.

\begin{lem}
  \label{lem:circlegood}
  For any $u\geq0$, as $y$ varies over the circle of radius $\ys$
  centered at the origin, the function $y \mapsto |g(y)|$ admits a
  strict global minimum at $y=\ys$, a strict global maximum at
  $y=-\ys$, and varies monotonically over the two half-circles
  connecting these extrema.
\end{lem}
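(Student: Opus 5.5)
We parametrize the circle as $y=\ys e^{i\theta}$ with $\theta\in[0,\pi]$; the other half-circle follows by complex conjugation, since $g$ has real coefficients and so $|g(\bar y)|=|g(y)|$. The claim then becomes: $\theta\mapsto|g(\ys e^{i\theta})|^2$ is strictly increasing on $(0,\pi)$. Starting from \eqref{eq:gydef}, we factor
\begin{equation*}
  |g(y)|^2=\frac{|y|^2\,|3-y|^4}{9\,|3+uy(2-y)|^4},
\end{equation*}
and note that $|y|=\ys$ is constant on the circle. Write $c:=\cos\theta$. Then $|3-y|^2=9-6\ys c+\ys^2$ is an affine, decreasing function of $c$. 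For the denominator, set $P(y):=3+2uy-uy^2$ and expand
\begin{equation*}
  |P(y)|^2=P(y)P(\bar y)=(3+2uy-uy^2)(3+2u\bar y-u\bar y^2).
\end{equation*}
Using $y+\bar y=2\ys c$, $y\bar y=\ys^2$ and $y^2+\bar y^2=\ys^2(4c^2-2)$, this becomes a quadratic polynomial $D(c)$ in $c$. So it is enough to prove that
\begin{equation*}
  F(c):=\frac{(9-6\ys c+\ys^2)^2}{D(c)^2}
\end{equation*}
is strictly decreasing in $c\in[-1,1]$. Equivalently, $N(c)/D(c)$ should be strictly decreasing, where $N(c)=9-6\ys c+\ys^2>0$. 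Since $c\mapsto\cos\theta$ is strictly monotone on $(0,\pi)$, this yields both the monotonicity on the half-circles and the location of the strict extrema at $\theta=0$ (minimum, $y=\ys$) and $\theta=\pi$ (maximum, $y=-\ys$).

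To prove the monotonicity we differentiate: the sign of $(N/D)'$ is that of $N'D-ND'$. Because $N$ is affine and $D$ is quadratic, $N'D-ND'$ is itself a polynomial of degree at most $2$ in $c$, with coefficients depending on $u$ and $\ys$. The main step is to show that this quadratic is strictly negative on $(-1,1)$. It may vanish at the endpoint $c=1$; this is expected, since $\ys$ is a critical point of $g$ along the real axis. We handle the two regimes separately:
\begin{itemize}
  \item In the subcritical regime, $\ys=1$ and $0\le u\le 9/5$. The quadratic has coefficients polynomial in $u$ alone.
  \item In the supercritical regime, we use Remark~\ref{rem:uysinv} to replace $u$ by $9/(\ys(6-\ys))$, so the coefficients become rational in $\ys\in(0,1]$.
\end{itemize}
In each case we factor out the root at $c=1$ when it is present, writing the quadratic as $(c-1)\,\ell(c)$ with $\ell$ affine. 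It then suffices to check the sign of $\ell$ at $c=\pm1$, which is a one-variable inequality in $u$ or in $\ys$.

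An alternative organisation avoids the explicit expansion. One checks that $\frac{d}{d\theta}\log|g(\ys e^{i\theta})|=-\operatorname{Im}\bigl(y\,g'(y)/g(y)\bigr)$ and uses the factorization \eqref{eq:gprime_y_eq_0}:
\begin{equation*}
  \frac{y\,g'(y)}{g(y)}=\frac{(1-y)(uy^2-6uy+9)}{(3-y)\,P(y)}.
\end{equation*}
Since the numerator factors through $1-y$ and through the roots of $uy^2-6uy+9$, one of which is $\ys$ in the supercritical case, the sign analysis reduces to showing that $\operatorname{Im}$ of a product of a few explicit factors has constant sign for $\theta\in(0,\pi)$.

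The main obstacle is this sign verification. It must hold uniformly in $u\ge0$, and in particular it must treat the critical case $u=9/5$, where the saddle is a double root and $N'D-ND'$ vanishes to higher order at $c=1$. There I would argue by continuity in $u$ together with an explicit evaluation.
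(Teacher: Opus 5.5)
Your argument is the paper's proof written in a different coordinate, but one step of your plan, the way you propose to certify the sign of the quadratic, is wrong as stated.

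\textbf{Same route as the paper.} The paper parametrizes the circle by $y=\ys(1+it)^2/(1+t^2)$ and works with $x=t^2=\tan^2(\theta/2)=(1-c)/(1+c)$. This is a M\"obius reparametrization of your $c=\cos\theta$. Its $\mathrm{Rat}$ is just $|g|=\ys N/(3D)$ written in the variable $x$. After substituting $c=(1-x)/(1+x)$ and multiplying by $(1+x)^2$, your quadratic $N'D-ND'$ becomes a negative multiple of the quadratic in $x$ displayed in the paper. Your reduction to the monotonicity of $N/D$ is correct, and so is your target inequality $N'D-ND'<0$ on $(-1,1)$.

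\textbf{Where the plan goes wrong.} You expect the quadratic to vanish at $c=1$ because $\ys$ is a critical point of $g$. That is false for $u\neq 9/5$. Criticality only kills the $\theta$-derivative at $\theta=0$, and the factor $\sin\theta$ in $\frac{d}{d\theta}=-\sin\theta\,\frac{d}{dc}$ already does that. Write $\log|g|=f(c)$ with $f$ smooth. Comparing $f(\cos\theta)=f(1)-\frac12 f'(1)\theta^2+O(\theta^4)$ with the Taylor expansion of $\log g$ at the saddle gives
\[
\frac{(N/D)'(1)}{(N/D)(1)}=f'(1)=\frac{\ys^2\,g''(\ys)}{g(\ys)},
\]
which is nonzero except at $u=9/5$.

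So the factorization $(c-1)\ell(c)$ exists only at the critical point, where the root is simple rather than of higher order. In the generic case, checking signs at $c=\pm1$ does not control a quadratic on the whole interval. The missing ingredient is the location of the vertex. Explicitly, $N'D-ND'=-2\ys\,G(c)$ with
\[
G(c)=36u\ys^2c^2-12u\ys(9+\ys^2)\,c+27+54u+24u\ys^2-6u^2\ys^2+u^2\ys^4 .
\]
For $u>0$ the vertex lies at $c^*=(9+\ys^2)/(6\ys)\geq 5/3$, and for $u=0$ one has $G\equiv 27$. Hence $G$ is nonincreasing on $[-1,1]$, strictly decreasing when $u>0$, and it suffices to check $G(1)\geq 0$.

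\textbf{The endpoint values.}
\begin{itemize}
\item For $\ys=1$, $G(1)=(3+u)(9-5u)$.
\item In the supercritical regime, using \eqref{eq:uysinv}, $G(1)=108(1-\ys)(3-\ys)^3/\bigl(\ys(6-\ys)^2\bigr)$.
\end{itemize}
These are, up to positive factors, exactly the constant terms of the paper's polynomials. So $G>0$ on $[-1,1)$ for every $u\geq 0$, including $u=9/5$, and no continuity argument in $u$ is needed.

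The paper's variable $x$ makes this step automatic. After the M\"obius change, every coefficient is nonnegative, so positivity on $x>0$ can be read off directly.
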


\begin{proof}
  It is convenient to use the rational parametrization of the circle
  \begin{equation}
    y(t) = \ys \frac{(1+it)^2}{1+t^2},\ t \in \R
  \end{equation}
  which is such that $y(0)=\ys$ and
  $\lim_{t \to \pm \infty} y(t)=-\ys$.

  As $y(-t)$ is the complex conjugate of $y(t)$ for $t$ real, the
  squared modulus of $g$ is simply given by $g(y(t))g(y(-t))$, which
  turns out to be the square of a rational function
  $\mathrm{Rat}(t^2)$ in $t^2$ with polynomial coefficients in $u$ and
  $\ys$ (which itself depends on $u$). We do not give its explicit
  expression here, as all that we write is best checked on a computer
  algebra system. We want to prove that $\mathrm{Rat}(x)$, where we write $x:=t^2$, is strictly
  increasing for $x \in \R_+$, which we do by computing its
  derivative.

  For $u \leq 9/5$, $\ys=1$, we find that $\mathrm{Rat}'(x)$ has
  the sign of
  \begin{equation}
    (3+u)(9-5u) + 2 (9-u)(3+5u) x + (27+234u-5u^2) x^2.
  \end{equation}
  It is straightforward to check that each coefficient of this
  expansion in $x$ is positive for $u \in [0,9/5)$, and only the first
  one vanishes for $u=9/5$. Hence, we have $\mathrm{Rat}'(x)>0$ for
  $x>0$, and the claim of the lemma is proved for $u \leq 9/5$.

  For $u > 9/5$, we use Remark~\ref{rem:uysinv} to recast
  $\mathrm{Rat}(x)$ as a rational function in $x$ with polynomial
  coefficients in $\ys \in (0,1)$. We now find that $\mathrm{Rat}'(x)$
  has the sign of
  \begin{equation}
    (1-\ys)(3-\ys)^3 + 2 (3-\ys)^2 (3+2\ys) x + (27 + 54 \ys + 18 \ys^2 + 2 \ys^3 - \ys^4) x^2.
  \end{equation}
  Again each coefficient of this expansion in $x$ is positive for
  $\ys \in (0,1)$; for the last one observe that the term $\ys^4$ is
  smaller than any of the other terms. So we also have
  $\mathrm{Rat}'(x)>0$ for $x>0$ and the claim of the lemma is now
  proved for all $u>0$.
\end{proof}

Taking the circle $|y|=\ys$ as integration contour in
\eqref{eq:genIn}, Lemma~\ref{lem:circlegood} enables us to obtain the
asymptotics of $I_n$ using Laplace's method. This method is covered in
classical textbooks such as~\cite{Dieudonne1986}, but we here give a
self-contained derivation of the estimate~\eqref{eq:genInasy}.
The general idea is to split the integration contour in two parts:
\begin{itemize}
\item a \emph{central arc} surrounding $\ys$ and of size tending to
  $0$ as $n \to \infty$, that captures the dominant contribution $I_n^{\mathrm{central}}$ to
  the integral,
\item the remaining \emph{tail arc}, whose contribution
  $I_n^{\mathrm{tail}}$ is exponentially negligible.
\end{itemize}
Here, we specifically take as central arc the intersection of the
integration circle with the cone
$\left\lvert\arg(y)\right\rvert<n^{\epsilon-1/2}$, for some small
$\epsilon>0$ which we will specify later. We work with the convention
that $\arg$ takes values between $-\pi$ and $\pi$.
\begin{figure}[h]
  \centering
  \begin{minipage}{.45\textwidth}
    \centering
    \includegraphics[width=\textwidth]{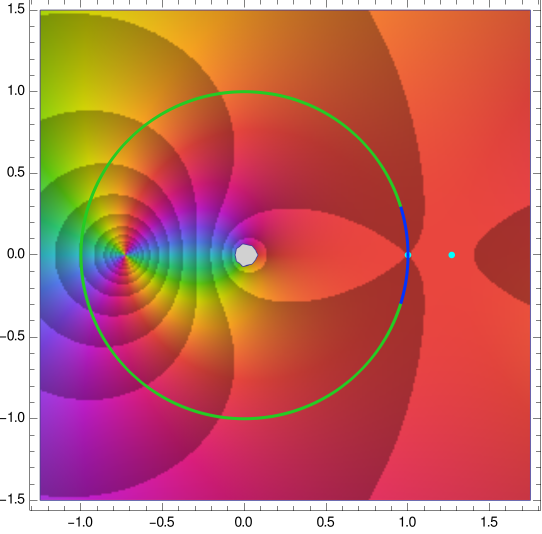}
  \end{minipage}
  \begin{minipage}{.45\textwidth}
    \centering
    \includegraphics[width=\textwidth]{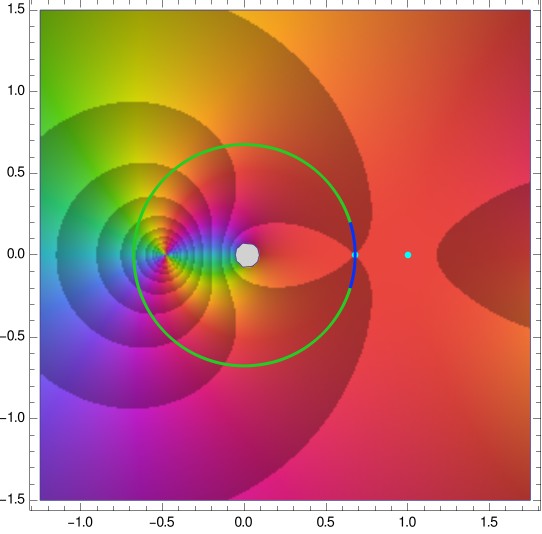}
  \end{minipage}
  \caption{The integration contour, in the $y$ complex plane,
    chosen for computing \eqref{eq:genIn},
    is the circle of radius $\ys$ centered at $0$.
    Left : $u=1.5$ hence $\ys=1$, a subcritical case.
    Right : $u=2.5$ hence $\ys \simeq 0.676$, a supercritical case.
    The integral $I_n^{\mathrm{central}}$ corresponding to the central arc
    $|\arg(y)| < n^{\epsilon - 1/2}$ (here $|\arg(y)| <0.3$, drawn in blue)
    asymptotically dominates over $I_n^{\mathrm{tail}}$ (corresponding to the arc drawn in green).
    The contour is here displayed on top of
    a colored representation of the complex function $\frac{1}{g(y)}$:
    the hue indicates the argument 
    and the shade varies with the modulus and marks its level curves.
    The function $\frac{1}{g(y)}$ has a pole at $0$,
    and two saddles seen on those plots (cyan dots), at $y=1$ and $y=3 \left( 1 - \sqrt{\frac{u-1}{u}} \right)$.
  }
  \label{fig:circleArcs}
\end{figure}

In the computations below, we set
\begin{equation}
  c := -\frac{\ys^2 g''(\ys)}{2g(\ys)}
\end{equation}
which is positive since $g(\ys)>0$ and $g''(\ys)<0$. Their explicit
expressions in the subcritical and supercritical phases have been
given above in \eqref{eq:gysaddle}, \eqref{eq:gsecond_ys_supercrit} and \eqref{eq:values_subcrit}. The quantity $c$ appears in the basic estimate
$\ln \frac{g(y)}{g(\ys)} \sim c (\arg (y))^2$ for $y \to \ys$.

\paragraph{Tail bound.} By the basic estimate above, there exists
$a \in (0,\pi]$ such that, for $\lvert\arg(y)\rvert < a$, we have
\begin{equation}
  \ln \left | \frac{g(y)}{g(\ys)} \right | \geq \frac {c} 2 \left |\arg (y)\right |^2
\end{equation}
Setting $c' = \frac{c a^2}{2 \pi^2}$ and using
Lemma~\ref{lem:circlegood}, we check straightforwardly that we have
\begin{equation}
  |g(y)| \geq g(\ys) e^{c' (\arg(y))^2}
  \label{eq:gycabound}
\end{equation}
for all $y$.  Using this lower bound for $\lvert\arg(y)\rvert \geq n^{\epsilon-1/2}$, we
immediately deduce that
\begin{equation}
  I_n^{\mathrm{tail}} = O\left( g(\ys)^{-n} e^{-c' n^{2\epsilon}} \right).
  \label{eq:tailboundsuper}
\end{equation}

\paragraph{Central approximation.} In the integral
$I_n^{\mathrm{central}}$, we perform the change of variable
$\nobreak{y=\ys e^{i s n^{-1/2}}}$, with $s$ ranging in the interval
$[-n^\epsilon,n^\epsilon]$. We have the Taylor expansion
\begin{equation}
  \log g(y) = \log g(\ys) + c \frac{s^2}n + O\left( \frac{s^3}{n^{3/2}} \right),
\label{eq:approxlog_g_noncrit}
\end{equation}
with $c>0$ as above, giving
\begin{equation}
  g(y)^{-n} = g(\ys)^{-n} e^{-c s^2} \left( 1 + O(n^{3\epsilon-1/2})\right)
  \label{eq:gyncentral}
\end{equation}
where the big $O$ is uniform in $s$. We shall therefore take
$\epsilon<1/6$ to have this error term tending to zero. Noting also
that
\begin{equation}
  f(y) \frac{dy}{ds}=\frac{i\ys f(\ys)}{n^{1/2}} \left( 1 + O(n^{\epsilon-1/2})\right)
\end{equation}
we get that
\begin{equation}
  I_n^{\mathrm{central}} = \frac{i\ys f(\ys)}{n^{1/2} g(\ys)^n} \int_{-n^\epsilon}^{n^\epsilon} ds \, e^{-c s^2} \left( 1 + O(n^{3\epsilon-1/2})\right).
\end{equation}
By dominated convergence the integral above converges for
$n \to \infty$ to
\begin{equation}
  \int_{-\infty}^{\infty} ds \, e^{-c s^2} = \sqrt{\frac{\pi}{c}}
\end{equation}
hence, by~\eqref{eq:tailboundsuper}, we have
\begin{equation}
  I_n \sim I_n^{\mathrm{central}} \sim \frac{i\ys f(\ys) \sqrt{\frac{\pi}{c}}}{n^{1/2} g(\ys)^n} =  i f(\ys) \sqrt{\frac{2\pi g(\ys)}{-g''(\ys)}}\, g(\ys)^{-n}\,  n^{-1/2}.
  \label{eq:integralresultsuper}
\end{equation}
This completes the proof of Proposition~\ref{prop:genericsaddle}.

\subsubsection{The critical case}
\label{sssec:critQ}

In the critical phase $u=9/5$, we need to use a different contour in the integral~\eqref{eq:Qnintysimp}.
Indeed, even though Lemma~\ref{lem:circlegood} holds, it is tedious to apply Laplace's method along the circle, since the integrand does not decay fast enough.

We shall rather use a contour that leaves the (now multiple) saddle point $\ys=1$
along directions of \emph{steepest descent} of $|g(y)|^{-1}$.
We choose for such a contour a shifted and scaled portion of the so-called \emph{Ceva trisectrix},
namely the curve parametrized by:
\begin{equation}
  y(\theta) := 1 - \frac{e^{i\theta} + e^{-i\theta} + e^{3i\theta}}{2}, \qquad \theta \in \left [- \frac \pi 3 ; \frac \pi 3 \right ] ;
  \label{eq:cevaTheta}
\end{equation}
see Figure~\ref{fig:CevaTrixArcs} for an illustration.
This somewhat arbitrary choice turns out to be convenient for explicit computations.
We have the following analog of Lemma~\ref{lem:circlegood}:
\begin{lem}
  As $y$ varies over the parametrized curve \eqref{eq:cevaTheta}, the function $y\mapsto |g(y)|$ admits a strict global minimum at $y(\theta=\pm \frac \pi 3)=\ys=1$, a strict global maximum at $y(\theta=0)=-\frac 1 2$, and varies monotonically over the two arcs connecting these extrema.
  \label{lem:cevagood}
\end{lem}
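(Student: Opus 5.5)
The plan is to mimic the proof of Lemma~\ref{lem:circlegood}: reduce $|g|^2$ along the curve to a real rational function of a single real parameter, then prove monotonicity by a sign analysis of its derivative. First I simplify $g$ at $u=9/5$. Since $3+\frac95 y(2-y)=\frac35\left(8-3(1-y)^2\right)$, the substitution $w:=1-y$ together with $y(3-y)^2=(1-w)(2+w)^2=4-3w^2-w^3$ gives
\begin{equation*}
  g(y)=\frac{25\,(4-3w^2-w^3)}{27\,(8-3w^2)^2},
\end{equation*}
so the triple zero of $g-g(\ys)$ at $w=0$ is apparent. On the curve~\eqref{eq:cevaTheta} we have $w(\theta)=\cos\theta+\frac12 e^{3i\theta}$, with $w(\pm\pi/3)=0$ (so $y=1$) and $w(0)=3/2$ (so $y=-\frac12$).

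Next, $y(-\theta)=\overline{y(\theta)}$, so $|g(y(\theta))|$ is even in $\theta$ and it suffices to treat $\theta\in[0,\pi/3]$. I write $|g|^2=g(y(\theta))\,g(y(-\theta))$. Every quantity involved ($\operatorname{Re} w$, $|w|^2$, $\operatorname{Re} w^2$, $\operatorname{Re} w^3$, $\operatorname{Re}(w\bar w^2)$, and so on) is a trigonometric polynomial that is even in $\theta$, hence a polynomial in $c:=\cos\theta$. Therefore $|g(y(\theta))|^2=\mathrm{Rat}(c)$ is a rational function of $c\in[\frac12,1]$, which I will compute with computer algebra. Since $c$ decreases in $\theta$ on $[0,\pi/3]$, the claim reduces to $\mathrm{Rat}$ being strictly increasing on $(\frac12,1)$. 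Evaluating at the endpoints gives $\mathrm{Rat}(1)=|g(-\frac12)|^2$ and $\mathrm{Rat}(\frac12)=g(1)^2=(25/432)^2$, and the monotonicity yields simultaneously the strict minimum at $y=1$, the strict maximum at $y=-\frac12$, and monotonicity on both arcs. Along the way I also check that the denominator $8-3w^2$ does not vanish on the curve, and that the curve, which is closed since $y(\pi/3)=y(-\pi/3)=1$, encircles $y=0$ but not the pole $y=3$. This justifies the deformation of the contour in~\eqref{eq:Qnintysimp}.

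The main obstacle is the sign analysis of $\mathrm{Rat}'(c)$. The numerator of $\mathrm{Rat}'(c)$ should carry a power of $(2c-1)$, since $y=1$ is a degenerate saddle: $g-g(\ys)\propto w^3$ and $w$ vanishes linearly at $\theta=\pi/3$. I will first factor this power out. For the remaining polynomial factor, I will substitute $c=\frac{1+s}{2+s}$ or $c=\frac12+\frac{s}{2(1+s)}$ with $s\in(0,\infty)$ and hope that all coefficients in $s$ become positive, exactly as in the proof of Lemma~\ref{lem:circlegood}. If some coefficients turn out negative, I will instead use a Sturm sequence or an explicit root isolation on $[\frac12,1]$ to show that there is no root in the open interval. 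The denominator is a square, or a product of squared moduli, and so is positive.
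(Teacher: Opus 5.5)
Your proposal is correct and is essentially the paper's proof: you reduce $|g(y(\theta))|^2$ along the trisectrix to a rational function of one real parameter and certify its monotonicity by computer algebra, and your factorization $g=\frac{25(4-3w^2-w^3)}{27(8-3w^2)^2}$ and the predicted factor $(2c-1)^2$ in the numerator of the derivative both check out. The only difference is the parameter: the paper uses $t^2=\tan^2(\theta/2)\in[0,\frac13]$ rather than your $c=\cos\theta\in[\frac12,1]$, with $c=\frac{1-t^2}{1+t^2}$, and the direction you require (increasing in $c$, i.e.\ decreasing in $t^2$) is the one needed for the minimum at $y=1$.
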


\begin{proof}
  We proceed very similarly to the proof of Lemma~\ref{lem:circlegood}:
  it is again convenient to use the rational parametrization $e^{i\theta} = \frac{(1 + i t)^2}{1+t^2}$ with $t$ now varying in the interval $[-\frac 1 {\sqrt{3}};\frac 1 {\sqrt{3}}]$.
  With this parametrization, the squared modulus of $y(\theta)$
  is again the square of a rational function $\mathrm{Rat}(t^2)$ in $t^2$,
  now with integer coefficients.
  We may straightforwardly verify on a computer algebra system that $\mathrm{Rat}$ is an increasing function on $[0;\frac 1 3]$ as wanted.
\end{proof}

We now apply Laplace's method to analyze the asymptotics of $Q_n$,
using the integral representation~\eqref{eq:Qnintysimp} with the above
contour. As in Section~\ref{sssec:noncritQ}, the general idea consists
in splitting the contour into a central arc surrounding the saddle
point $\ys=1$, and a remaining tail arc. More precisely,
viewing~\eqref{eq:cevaTheta} as a change of variable, we may write
\begin{equation}
  Q_n = \frac{J_n}{3i\pi n}, \qquad  J_n := \int_{-\pi/3}^{\pi/3} \,\frac{1-y(\theta)}{g(y(\theta))^n}\,  y'(\theta) d\theta.
  \label{eq:Jndef}
\end{equation}
Since $\ys$ is actually reached at the two endpoints $\pm \pi/3$ of
the interval for $\theta$, we split $J_n$ into three terms
$J_n^+ + J_n^- + J_n^{\mathrm{tail}}$, with:
\begin{itemize}
\item $J_n^+$ the contribution of the interval
  $\theta \geq \frac \pi 3 - n^{\epsilon-\frac 1 3}$ corresponding to
  the first half of the central arc,
\item $J_n^-$ the contribution of the interval
  $\theta \leq - \frac \pi 3 + n^{\epsilon-\frac 1 3}$ corresponding
  to the second half of the central arc,
\item and $J_n^{\mathrm{tail}}$ the remaining contribution of the tail arc.
\end{itemize}
\begin{figure}[h]
  \centering
  \begin{minipage}{.45\textwidth}
    \centering
    \includegraphics[width=\textwidth]{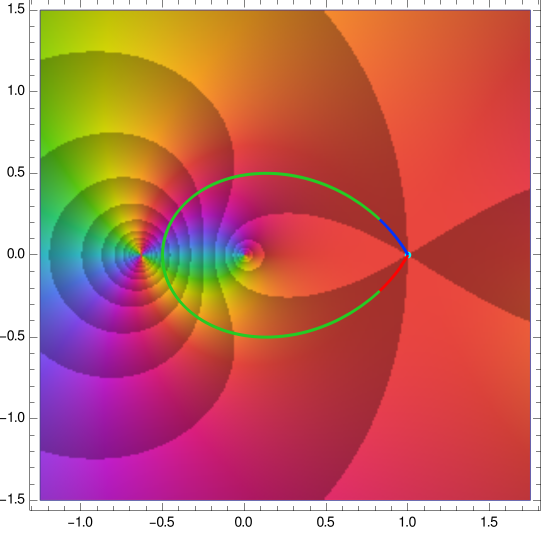}
  \end{minipage}
  \caption{The integration contour, in the $y$ complex plane,
    parametrized by Equation~\eqref{eq:cevaTheta}.
    We colored the Ceva trisectrix according to the subdivision in the text 
    (here for $n^{\epsilon - \frac{1}{3}} = .15$):
    the tail arc corresponding to $J_n^{\mathrm{tail}}$ in green,
    the first (respectively second) half of the central arc
    corresponding to $J_n^+$ (resp. $J_n^-$) in red (resp. blue).
    Again, the contour is displayed on top of a colored representation
    of the complex function $\frac{1}{g(y)}$,
    where the saddle $y=1$ is now a monkey saddle.
    Note that the central arcs reach that saddle point with an angle $\pm \frac{2 \pi}{3}$.}
  \label{fig:CevaTrixArcs}
\end{figure}

Again $\epsilon$ is a small positive real number and, as we will see
below, we may choose $\epsilon = 1/24$.  Note that we have
\begin{equation}
  J_n^+ = - \left( J_n^- \right)^*.
  \label{eq:Jnpmconj}
\end{equation}
We now mimic the reasoning
done at the end of Section~\ref{sssec:noncritQ}, using here the basic
estimate
\begin{equation}
  \ln \frac{g(y)}{g(\ys)} \sim \frac{(y-1)^3}4, \qquad y \to \ys = 1.
\end{equation}

\paragraph{Tail bound.} By the basic estimate above and by
Lemma~\ref{lem:cevagood}, we may find a positive constant $c$ such
that
\begin{equation}
  |g(y)| \geq g(\ys) e^{c |y-1|^3}
\end{equation}
for all $y$ on the integration contour.
Using this lower bound for $y$ on the tail arc, we deduce that there exists $c'>0$ such that
\begin{equation}
  J_n^{\mathrm{tail}} = O\left( g(\ys)^{-n} e^{-c' n^{3\epsilon}} \right).
  \label{eq:tailboundcrit}
\end{equation}

\paragraph{Central approximation.} Observe that, by
\eqref{eq:Jnpmconj}, it suffices to compute the large $n$ asymptotics
of $J_n^-$. Setting $\theta := -\frac{\pi}{3} + s n^{-1/3}$ with
$s \in [0, n^\epsilon]$, we have the Taylor expansion:
\begin{equation}
\log g(y) = \log \left( \frac{25}{432} \right) + \frac {3 \sqrt{3}} 4 \frac{s^3}{n} + O \left( \frac{s^4}{n^{4/3}} \right)
\label{eq:approxlog_g_crit}
\end{equation}
giving
\begin{equation}
  g(y)^{-n} = \left( \frac{25}{432} \right)^{-n} \exp \left( - \frac{3\sqrt{3}}{4} s^3 \right) \left( 1+ O\left( n^{4\epsilon - 1/3} \right) \right)
  \label{eq:gyncentralcrit}
\end{equation}
with the big $O$ uniform in $s$.  Taking $\epsilon = 1/24$ ensures that this error term tends to zero.
Noting also that
\begin{equation}
  (1-y)\frac{dy}{ds} = \frac{3 (1+ i\sqrt{3})}{2} \cdot \frac{s}{n^{2/3}} \left( 1+O(n^{\epsilon-1/3}) \right),
\end{equation}
we obtain that
\begin{equation}
  J_n^- = \frac{3 (1+ i\sqrt{3})}{2 n^{2/3}} \left( \frac{25}{432} \right)^{-n} \int_{0}^{n^\epsilon} ds \exp \left( - \frac{3\sqrt{3}}{4} s^3 \right) s \left( 1+ O\left( n^{4\epsilon - 1/3} \right) \right).
\end{equation}
By dominated convergence the integral above converges for
$n \to \infty$ to
\begin{equation}
  \int_{0}^{\infty} ds \exp \left( - \frac{3\sqrt{3}}{4} s^3 \right) s = \frac{2^{4/3} \Gamma(2/3)}{9}
\end{equation}
so that
\begin{equation}
  J_n^- \sim \frac{(1+ i\sqrt{3})\, 2^{1/3} \Gamma(2/3)}{3\, n^{2/3}} \left( \frac{25}{432} \right)^{-n}.
\end{equation} 
Using~\eqref{eq:Jndef}, \eqref{eq:Jnpmconj}, and~\eqref{eq:tailboundcrit}, we deduce
\begin{equation}
  Q_n = \frac 1 {3 i \pi n} J_n \sim \frac{2^{4/3} \Gamma(2/3)}{3\sqrt{3}\, \pi\, n^{5/3}} \left( \frac{25}{432} \right)^{-n}
  \label{eq:integralresultcrit}
\end{equation}
yielding the wanted asymptotic behavior~\eqref{eq:Qn_asymcrit} by noting that $\Gamma(2/3)\Gamma(1/3) = 2 \pi/\sqrt{3}$.

\section{Asymptotic analysis of the two-point function}
\label{sec:asymptKl}

In this section, we adapt the saddle-point method of
Section~\ref{sec:asymptQ} to study the asymptotics of the
distance-dependent two-point function. More precisely, fixing again $u$
to a nonnegative real value, let us denote by $K_\ell^{(n)} = K_\ell^{(n)}(u)$ the
coefficient of $g^n$ in the series $K_\ell(g,u)$ introduced in
Section~\ref{ssec:blockweightedTPF}. In other words, $K_\ell^{(n)}$ is
the sum of the weight $u^{b(\cQ)-1}$ over all rooted quadrangulations
$\cQ$ with $n$ faces and an additional marked edge\footnote{Recall that we also require that the marked edge and the root edge do not have the same two endpoints.} at root distance
$\ell$.
Introducing the quantity $K^{(n)} = K^{(n)}(u) := \sum_{\ell \geq 1} K_{\ell}^{(n)} = [g^n]\hat K(1,g,u)$, it is natural to consider the ratio 
\begin{equation}
  p_{\ell}^{(n)} = \frac{K_{\ell}^{(n)}}{K^{(n)}}
\end{equation}
which is the probability that, in a random block-weighted quadrangulation
with $n$ faces and an additional marked edge,
the root distance of the marked edge is $\ell$.
We will see that, for any $u \geq 0$,
there exists an exponent $\nu$ (depending on $u$) and a non-trivial function $\rho_u$ such that
\begin{equation}
  n^{\nu} p_{\lfloor d \, n^{\nu} \rfloor}^{(n)} \to \rho_u(d)
\end{equation}
uniformly for $d$ on any compact set of $\mathbb R_{+}^{*}$.
The limit $\rho_u$ is a continuous probability density on $\mathbb R_{+}$,
which we call the \emph{distance profile}.
More precisely, our main asymptotic result is summarized in the following proposition:
\begin{prop}
  The distance profile $\rho_u$ is as follows:
  \begin{itemize}
    \item in the subcritical phase $0 \leq u \leq 9/5$, we have $\nu = 1/4$ and
      \begin{equation}
        \rho_u(d) = \frac{1}{D} \  \rho_{\mathrm{sub}}\!\left( \frac{d}{D} \right),
  \qquad D = \left( \frac{9-5u}{3+u} \right)^{1/4}
        \label{eq:distanceprofile_subcrit}
      \end{equation}
      with
      \begin{equation}
        \rho_{\mathrm{sub}}(x) = \frac{8}{x \sqrt{\pi}}
        \int_{0}^{+\infty} s^2 (2 s^2 - 3) e^{- s^2}
        \left( 1 - 6 \frac{1 - \cosh\left( x\sqrt{3 s} \right) \cos\left( x\sqrt{3 s} \right)}{\left( \cosh\left( x\sqrt{3 s} \right) - \cos\left( x\sqrt{3 s} \right) \right)^2} \right)ds;
      \end{equation}
    \item in the critical phase $u = 9/5$, we have $\nu = 1/3$ and
      \begin{equation}
        \rho_u(d)  = \rho_{\mathrm{crit}}(d) = 3^{1/3} \Gamma(1/3) \frac{d}{D^2} \Airy \left(\frac{d}{D} \right),
        \qquad D = \frac{2 \pi^2 - 15}{6^{2/3}}
        \label{eq:distanceprofile_crit}
      \end{equation}
      with $\Airy$ the Airy function;
    \item in the supercritical phase $u \geq 9/5$, we have $\nu = 1/2$ and
      \begin{equation}
        \rho_u(d) = \frac{1}{D} \rho_{\mathrm{sup}}\!\left( \frac{d}{D} \right),
        \qquad D = \frac{\partial \hat h}{\partial
        t}(1,\zs) \frac{\sqrt{(3-\ys)^5(1-\ys)}}{6 \sqrt{6} \, \ys}
        \label{eq:distanceprofile_supercrit}
      \end{equation}
      with
      \begin{equation}
        \rho_{\mathrm{sup}}(x) = x e^{-\frac{1}{2} x^2}
        \label{eq:rho_sup_Rayleigh_distance_profile}
      \end{equation}
      the Rayleigh distribution.
  \end{itemize}
  \label{prop:distanceprofiles}
\end{prop}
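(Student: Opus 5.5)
We start from a double contour integral for $K_\ell^{(n)}$:
\[
K_\ell^{(n)}=\frac{1}{(2i\pi)^2}\oint\oint \frac{\hat K(t,g,u)}{t^{\ell}g^{n+1}}\,dt\,dg .
\]
We then change variables $g=g(y)$ as in Proposition~\ref{prop:Qratparam}. Remark~\ref{rem:zy_param} gives $z=z(y)=y(3-y)^2/27$ independently of $u$, so Theorem~\ref{thm:hatKexpr} expresses the integrand through $\hat h(t,z(y))$, $r$ and $x$ at $z(y)$, which are explicit functions of $y$. The normalisation $K^{(n)}=[g^n]\hat K(1,g,u)$ is handled exactly as $Q_n$ in Section~\ref{sec:asymptQ}: using Remark~\ref{rem:hath_t_eq_1}, it is a single $y$-integral, and the saddle-point analysis (Proposition~\ref{prop:genericsaddle} or the Ceva contour) gives exponents $n^{-3/2}$, $n^{-4/3}$ and $n^{-1/2}$ in the three phases. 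Indeed $K^{(n)}\approx 2nQ_n$ by \eqref{eq:hatK1_combi}. For the numerator, we do the $t$-integral by moving the contour outward, or equivalently read $K_\ell$ as a coefficient in $t$. Then we scale $\ell=d n^{\nu}$ and $y$ near $\ys$, and apply Laplace's method in $y$ with the same contours (circle of radius $\ys$, Ceva trisectrix), using Lemmas~\ref{lem:circlegood} and~\ref{lem:cevagood} for the tail bounds. The ratio then gives $\rho_u$.

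\textbf{Supercritical case.} Here $\ys<1$, so $\zs=z(\ys)<4/27$ and $x(\zs)<1$. Hence $\hat h(\cdot,\zs)$ is meromorphic with simple poles at $t=x^{-k}$, all far from $t=1$. The singularity of $\hat K$ in $t$ closest to the origin is then the zero $t_*(y)$ of the denominator $u+(1-u)\hat h(t,z(y))$, which equals $1$ at $y=\ys$. To check this, combine $\hat h(1,z)=r^2$ with $u+(1-u)r^2=0$ at the saddle; this is the same square-root singularity that produces the $n^{-3/2}$ of $Q_n$. Taking the residue at this simple pole gives $K_\ell\approx C(y)\,t_*(y)^{-\ell}$. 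Near $\ys$ we have $\log t_*(y)\approx \alpha (y-\ys)$, with $\alpha$ determined by $\partial_t\hat h(1,\zs)$, which is where the constant $D$ comes from. The phase factor $e^{-\ell\alpha(y-\ys)}$ combines with the Gaussian $e^{-cn(y-\ys)^2}$. With $y=\ys e^{isn^{-1/2}}$ and $\ell=dn^{1/2}$, we get a Gaussian integral in $s$ whose Fourier-type evaluation yields $x e^{-x^2/2}$. Since a zero of the residue prefactor at the saddle must be handled by integration by parts as in \eqref{eq:Qn_doubleintbyparts}, that is what produces the factor $x$.

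\textbf{Subcritical and critical cases.} Here $\ys=1$ gives $z(1)=4/27$, which is exactly the singularity of $r,x$ where $x\to 1$. The poles $t=x^{-k}$ therefore coalesce at $t=1$, and this is the main obstacle. First we need a local expansion of $x$ near $y=1$: from \eqref{eq:rxzeqs}, $x=e^{-\lambda}$ with $\lambda\sim \kappa(1-y)^{1/2}$ or similar. Then we need a scaling form of $\hat h$ in the regime $1-t\sim\lambda$: the sum in \eqref{eq:hhatmeroalt} becomes a Riemann sum approximated by an integral, giving an explicit scaling function. Equivalently, we work directly with the explicit $h_\ell$ of \eqref{eq:klFromMinbus}, whose scaling form at $\ell\lambda=O(1)$ involves $\sinh$ ratios. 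This is the source of the $\cosh\cdot\cos$ kernel after the $y$-rotation to the steepest-descent direction. I would do the $t$-extraction exactly via $h_\ell$ where possible: $\hat K=\frac{z}{g}\frac{\hat h-1}{u+(1-u)\hat h}$ is a geometric-type convolution in $\ell$. I would justify uniform bounds on $\hat h$ on a $t$-contour of radius $1-\delta n^{-\nu}$, which is the analysis deferred to the appendix on the singularity of $\hat h$.

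In the subcritical case the denominator stays away from $0$. With $\ell\sim dn^{1/4}$, $y-1\sim sn^{-1/2}$ and $\lambda\sim n^{-1/4}$, the integrand reduces to the Brownian-map type kernel, yielding $\rho_{\mathrm{sub}}$ with scale $D$ coming from $g''(1)$. In the critical case the denominator $u+(1-u)\hat h$ vanishes at $t=1$, $y=1$ simultaneously. The combination of the cubic saddle (scale $n^{-1/3}$) with the linear-in-$(1-t)$ vanishing gives $\ell\sim n^{1/3}$ and a pole $t_*(y)$ with $\log t_*\propto (1-y)$. The Ceva-contour integral $\int s\,e^{-cs^3-ids}\,ds$ then produces an Airy function, and $D$ involves $\sum k^{-2}$-type constants, i.e.\ $\pi^2$. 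The obstacle I expect is controlling the $\hat h$ expansion uniformly across the coalescence regime together with the tails. My first approach would be explicit Euler–Maclaurin bounds on the sum in $k$.
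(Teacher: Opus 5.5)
Your outline follows the paper's route: a double contour integral with $g=g(y)$, circle and Ceva contours with Lemmas~\ref{lem:circlegood} and~\ref{lem:cevagood}, a simple pole $t_0(y)=1-C(y/\ys-1)+\dots$ in the supercritical phase, a pole at $t\approx 1+\delta^2/(2\pi^2-15)$ at criticality, and $K^{(n)}\sim 2nQ_n$. However, for $u\le 9/5$ the step carrying the difficulty is left open (you flag it yourself), and the tools you propose for it fail.

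\textbf{The window $t-1=\tau\delta$.} Here the sum in~\eqref{eq:hhatmeroalt} is not a Riemann sum. The terms with $k=O(1)$ behave like $\frac{k}{(k-\tau)\delta}$ and carry the poles at $\tau\approx k$. An integral approximation only reproduces $H_\alpha(1+\tau\delta)$ (see~\eqref{eq:hphiexp}), whose Taylor coefficients are the $\delta$-independent $H_\ell$. It therefore cannot produce the $\cosh/\sinh^3$ crossover at $\ell\delta=O(1)$. Nor can it exclude the spurious zeros of $\hat h-\frac94$ near $\tau=k+\frac{12k\delta}{2\pi^2-15}$ at criticality. That information is the term $-9\tau^2\psi(-\tau)\delta^2$ of Proposition~\ref{prop:hexptsmall}. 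It is obtained by summing the terms $k<K$ exactly and using Euler--Maclaurin only for $k\ge K$, with a cutoff $1\ll K\ll|\delta|^{-1}$; Euler--Maclaurin over all $k$ does not work.

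\textbf{The $t$-contour.} A contour of radius $1-\delta' n^{-\nu}$ stays inside all singularities of $\hat K(\cdot,g(y),u)$. It only yields an $O(1)$ Cauchy bound, whereas $K_\ell=\Theta(\delta^3)$, so the contour must be pushed through the string of poles near $1+k\delta$.

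\textbf{The missing key estimate.} What you need, and never state, is the subcritical estimate
\[
K_\ell(g(y),u)=c(u)\,h_\ell(z(y))+O(\delta^{19/6}),\qquad c(u)=\frac{z(1)}{g(1)}\Big(u+(1-u)\tfrac94\Big)^{-2}=\frac{16}{9}\Big(\frac{3+u}{9-5u}\Big)^2 .
\]
The paper proves it by deforming the $t$-contour into small circles around $1+k\delta$, $k<|\delta|^{-1/16}$, plus one large circle. On these, $\hat K-c(u)\hat h$ is regular up to $O(\delta^{(7-5b)/3})$. At criticality the analogous deformation (circles $|t-1|=|\delta|^2$ and $|t|=e^{|\delta|/2}$) uses all three regimes of Section~\ref{ssec:hsingtext}.

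Your convolution remark could give a more elementary subcritical proof. Indeed $\frac{|1-u|}{|u+(1-u)h_1|}\sum_{\ell\ge2}|h_\ell|\to\frac{4|1-u|}{5-u}$, which is $<1$ exactly when $u<9/5$. So the expansion of $\hat K$ in powers of $\hat h-h_1$ converges absolutely, and a one-big-jump estimate on convolution powers of the explicit $h_\ell$ would yield the same proportionality. But that estimate has to be proved, and the argument breaks down at $u=9/5$, where the ratio equals $1$.

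\textbf{Concrete errors.}
\begin{itemize}
\item At $u=9/5$ one has $K^{(n)}\sim 2nQ_n\asymp n^{-2/3}$, not $n^{-4/3}$. With your exponent, $n^{1/3}K^{(n)}_\ell/K^{(n)}$ would diverge like $n^{2/3}$.
\item In the Rayleigh case, the factor $x$ does not come from a zero of the residue prefactor $\frac{z}{g}\frac{1}{(u-1)^2\partial_t\hat h(t_0(y),z(y))}$, which is nonzero at $\ys$. It comes from the Jacobian: $g'(\ys)=0$ gives $\frac{d\log g}{ds}=\frac{2cs}{n}$, hence $\int s\,e^{iCds-cs^2}ds\propto d\,e^{-C^2d^2/(4c)}$, with no integration by parts.
\item At criticality this Jacobian factor is $\propto s^2$ (monkey saddle) and the phase is $e^{-i\pi/3}sd$. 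It is the real part of $\frac{1}{2i\pi}\int_0^\infty s^2e^{-\frac{3\sqrt3}{4}s^3-\frac{3\sqrt3}{2\pi^2-15}e^{-i\pi/3}sd}ds$ that gives a multiple of $d\,\Airy(d/D)$, since only then do the two conjugate half-arcs combine into a single Airy contour. Your $\int s\,e^{-cs^3-ids}ds$ does not give this.
\item In the supercritical case you must show that $t_0(y)$ is the only zero of $\hat h(\cdot,z(y))-\frac{u}{u-1}$ in a disk of radius $>1$, uniformly for $y$ near $\ys$. The paper does this using positivity of the coefficients $\beta_k$ at $y=\ys$ and Rouch\'e's theorem. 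Without it, the remainder in the residue formula is not $O(\xs^{\ell/2})$.
\end{itemize}
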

\begin{rem}
  The scaling factor $D$ in \eqref{eq:distanceprofile_supercrit}
  can be made slightly more explicit by noting that,
  from Equation~\eqref{eq:hhatmeroalt} with $\xs=x(\ys)=\frac{3-\ys-\sqrt{9-6\ys-3\ys^2}}{2\ys}$, we have :
  \begin{equation}
    \begin{split}\frac{\partial \hat h}{\partial t}(1,\zs) 
      &= \frac{27(1-\ys)(3+\ys)}{\ys^2(3-\ys)^2} \sum_{k \geq 1} k \frac{\xs^{3 k}}{1-\xs^k} \\
    &= \frac{27(1-\ys)(3+\ys)}{\ys^2(3-\ys)^2} \sum_{m \geq 3} \frac{1}{2 T_m\left( \frac{3}{2 \ys} - \frac{1}{2} \right) - 2}\end{split}
    \label{eq:dhathtz}
  \end{equation}
  with $T_m$ the $m$-th Chebychev polynomial of the first kind.
  Note that at large $u$, $\ys \to 0$ and the term $m=3$ dominates in \eqref{eq:dhathtz},
  and thus $\frac{\partial \hat h}{\partial t}(1,\zs) \sim \frac{27 \times 3}{3^2 \ys^2} \frac{\ys^3}{27}$ and $D \underset{u\to \infty}{\longrightarrow} \frac{1}{2\sqrt{2}}$.
  \label{rem:scaling_supercrit_limit}
\end{rem}
Figure~\ref{fig:scaling_factors_and_distance_profiles_Kl} presents a plot of the scaling factors $D$ in the different regimes, and displays instances of the distance profile in the three regimes.
\begin{figure}[h]
  \centering
  \includegraphics[width=.49\textwidth]{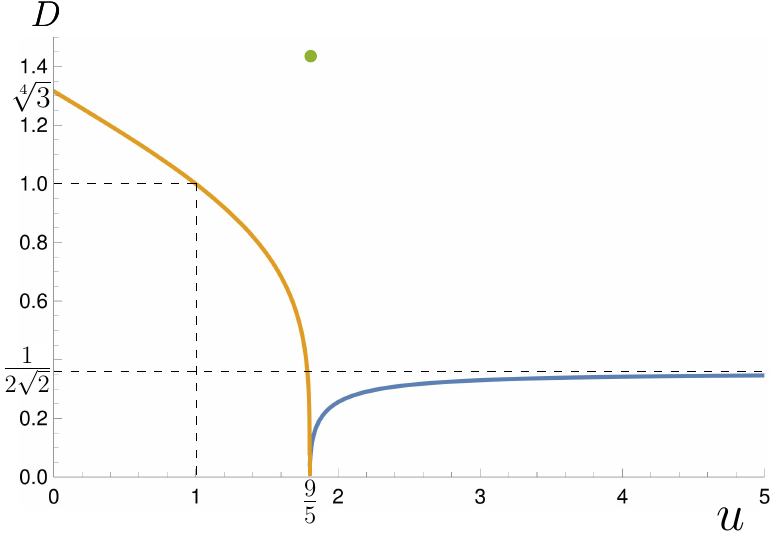}
  \includegraphics[width=.49\textwidth]{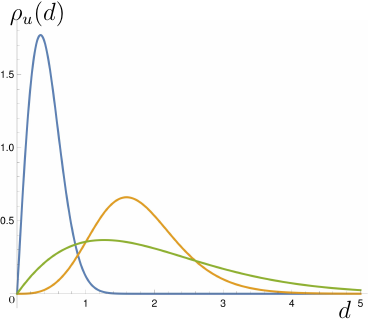}
  \caption{Left: the scaling factors $D$ for the different regimes of Proposition~\ref{prop:distanceprofiles}. \\
  Right: the distance profile $\rho_{u}(d)$ for $u = 1$ (subcritical, in orange), $u = 9/5$ (critical, in green), and $u = 4$ (supercritical, in blue).}
  \label{fig:scaling_factors_and_distance_profiles_Kl}
\end{figure}
The proof of Proposition~\ref{prop:distanceprofiles} is the topic of the coming subsections.
Our starting point is the contour integral representation
\begin{equation}
  K_\ell^{(n)} = \frac{1}{2i\pi} \oint \frac{K_\ell(g,u)dg}{g^{n+1}}
  \label{eq:kellnu}
\end{equation}
where, as in~\eqref{eq:Qnintbasic}, the contour of integration is
initially a small circle around the origin. Indeed, since
$K_\ell(g,u)$ is the generating function of rooted quadrangulations
with an additional marked edge at root distance $\ell$, its radius of
convergence in $g$ cannot be smaller than that of $Q(g,u)$, which is
nonzero for any $u$ as discussed above. So, again, the contour
integral makes sense.

Before entering into details, let us sketch our general
strategy. First, we use the rational parametrization $g=g(y)$ of
Proposition~\ref{prop:Qratparam} to rewrite~\eqref{eq:Qnintbasic} as
an integral over $y$. We then deform the contour of integration in $y$
as in Section~\ref{sec:asymptQ}, so that it passes through the
saddle-point $\ys$ of $g(y)$. The dominant contribution to the
integral comes from the vicinity of the saddle-point: to compute it we
shall find an estimate of $K_\ell$ in this region, with $\ell$ itself
scaling as a power of $n$. This is what we shall call \emph{key
  estimates} in the following.  The difficulty is that we only know
$K_\ell$ through its generating function $\hat K(t)$: to overcome
this, we write $K_\ell$ itself as a contour integral over $t$, whose
asymptotics for $y$ near $\ys$ is related with the singular behavior
of $\hat{K}(t)$ near $t=1$. It turns out that this behavior changes
drastically at the phase transition, which forces us to study the
three subcritical, critical and supercritical cases separately. In the
two latter cases, $\hat{K}(t)$ has a single dominant pole near $t=1$,
while in the first case infinitely many poles contribute.

\subsection{Analytic preliminaries}
\label{ssec:asymptKl_prelim}

To perform the asymptotic analysis of the contour
integral~\eqref{eq:kellnu}, we shall combine many of the ingredients
introduced above. First, we use the
rational parametrization $g=g(y)$ of Proposition~\ref{prop:Qratparam},
to yield
\begin{equation}
  K_\ell^{(n)} = \frac{1}{2i\pi} \oint \frac{K_\ell(g(y),u)g'(y)dy}{g(y)^{n+1}}
  \label{eq:Klnuy_integraldef}
\end{equation}
with initial contour a small circle around the origin. Second, we have by~\eqref{eq:defhatK}
\begin{equation}
  K_\ell(g(y),u) = \frac{1}{2i\pi} \oint \frac{\hat K(t,g(y),u)dt}{t^\ell}
  \label{eq:Kellgyu}
\end{equation}
where, by Theorem~\ref{thm:hatKexpr} and Remark~\ref{rem:zy_param}, $\hat K(t,g(y),u)$ reads
\begin{equation}
  \hat K(t,g(y),u) = \frac{z(y)}{g(y)} \cdot \frac{\hat h(t, z(y)) - 1}{u + (1-u) \hat h (t, z(y))}, \qquad \frac{z(y)}{g(y)} =  \left(1 + u \, \frac{y(2-y)}{3}\right)^2.
  \label{eq:Ktgyu}
\end{equation}
Note that, in~\eqref{eq:Kellgyu}, we may take the contour of
integration to be the circle $|t|=1$ for $y$ small enough, since the
series $\hat K(t,g(y),u)$ counting quadrangulations with an additional
marked edge is then absolutely convergent. As for the quantity
$\hat h(t, z(y))$ appearing in the above display, it is explicitly
given by the following proposition:

\begin{prop}
  \label{prop:hathzy}
  We have
  \begin{equation}
    \hat h(t,z(y)) = \frac{27(1-y)(3+y)}{y^2(3-y)^2} \sum_{k \geq 1} k  x(y)^{2 k} \frac{1-x(y)^k}{1-t x(y)^k}
    \label{eq:hathtzy}
  \end{equation}
  with
  \begin{equation}
    x(y) = \frac{3-y-\sqrt{9-6y-3y^2}}{2y} = \sum_{n \geq 0} m_n \left( \frac{y}{3} \right)^{n+1},
    \label{eq:xyexplicit}
  \end{equation}
  $m_n$ denoting the $n$-th Motzkin number.
\end{prop}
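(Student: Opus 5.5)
Starting from Theorem~\ref{thm:hatKexpr}, the expression \eqref{eq:hathexpr} for $\hat h(t,z)$ contains $r$ and $x$ only through the prefactor $r^2(1-x^2)^2/x^2$ and the sum. The proof therefore reduces to substituting $z=z(y)=\frac{y(3-y)^2}{27}$ of Remark~\ref{rem:zy_param} and making $r$ and $x$ explicit rational or algebraic functions of $y$.

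\emph{Step 1: solve for $r$.} The equation $r=1+z r^3$ with $z=y(3-y)^2/27$ has the power-series solution
\[
r=\frac{3}{3-y},
\]
since then $z r^3=\frac{y}{3-y}=r-1$. This root is the one with $r=1+O(y)$, so it is the correct series root.

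\emph{Step 2: solve for $x$.} We get
\[
z r^2=\frac{y}{3},
\]
so the second equation of \eqref{eq:rxzeqs} becomes $x+1/x+1=3/y$, that is $y x^2-(3-y)x+y=0$. The root vanishing at $y=0$ is the one with the minus sign, giving the closed form in \eqref{eq:xyexplicit}. For the Motzkin expansion, set $w=y/3$. Dividing the quadratic by $3$ gives
\[
x=w(1+x+x^2),
\]
which is exactly the functional equation of $w\,M(w)$, where $M$ is the Motzkin generating function satisfying $M=1+wM+w^2M^2$. Hence $x=\sum_n m_n w^{n+1}$. Uniqueness of the power-series solution with zero constant term justifies both identifications. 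Since $z\mapsto y(z)$ is an invertible series with $y\sim 27z/9\cdot\ldots$ (zero constant term and nonzero linear term), composing the series is legitimate.

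\emph{Step 3: the prefactor.} It remains to compute $r^2(1-x^2)^2/x^2=r^2(1/x-x)^2$. From the quadratic,
\[
\frac1x+x=\frac{3-y}{y},
\]
so
\[
\left(\frac1x-x\right)^2=\left(\frac{3-y}{y}\right)^2-4=\frac{(3-y)^2-4y^2}{y^2}=\frac{9(1-y)(3+y)}{y^2}\cdot\frac{1}{3}\cdot 3,
\]
that is, $\frac{(3-3y)(3+y)}{y^2}=\frac{3(1-y)(3+y)}{y^2}$. Multiplying by $r^2=9/(3-y)^2$ gives $\frac{27(1-y)(3+y)}{y^2(3-y)^2}$, as claimed.

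No step is a real obstacle. The only care needed is choosing the correct branch in Steps 1 and 2, which is fixed by requiring the power series to have the right constant term.
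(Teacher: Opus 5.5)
Your proof is correct and follows essentially the paper's route: substitute $z=z(y)$, identify $r=\frac{3}{3-y}$ and $zr^2=\frac{y}{3}$, recognize the Motzkin equation $x=\frac{y}{3}(1+x+x^2)$, and compute the prefactor. The only difference is that the paper derives these values via $w:=zr^2$, $r=\frac{1}{1-w}$, $z=w(1-w)^2$ instead of verifying them directly; also, in Step 3 the intermediate expression $\frac{9(1-y)(3+y)}{y^2}\cdot\frac13\cdot 3$ is off by a factor $3$, although the value $\frac{3(1-y)(3+y)}{y^2}$ you then state, and hence the final prefactor, is correct.
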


\begin{proof}
  We start from the expression~\eqref{eq:hathexpr} for $\hat
  h(t,z)$. We claim that, substituting $z=z(y)$, the series $x$ and
  $r$ appearing in this expression evaluate respectively to $x(y)$ as
  above, and to
  \begin{equation}
    r(y)=\frac 3 {3-y}.
    \label{eq:ryexp}
  \end{equation}
  Indeed, introducing the series $w:=z r^2$, \eqref{eq:rxzeqs} allows
  us to write $r = 1 + w r$, hence $r = \frac{1}{1-w}$ and thus
  $z = \frac{w}{r^2} = w (1-w)^2$.  Substituting $z=z(y)$, $w$
  evaluates to $w(y)$ determined by the conditions
  $z(y) = w(y) (1-w(y))^2$ and $w(y)=O(y)$. Comparing with the
  expression $z(y)=\frac{y}{3}\left( 1-\frac{y}{3} \right)^2$ of
  Remark~\ref{rem:zy_param}, it jumps to the eyes that
  $w(y)=\frac{y}{3}$.  We immediately deduce the above expression for
  $r(y)$.  Now, from $x + \frac{1}{x} + 1 = \frac{1}{w}$, we find that
  $x(y)$ satisfies
  \begin{equation}
    x(y) = \frac{y}{3}(1 + x(y) + x(y)^2).
  \end{equation}
  We recognize the classical equation satisfied by the generating
  function of Motzkin numbers, in the variable $y/3$, which
  establishes~\eqref{eq:xyexplicit}. It is straightforward to check
  that the prefactor $r^2 \frac{(1-x^2)^2}{x^2}$
  in~\eqref{eq:hathexpr} evaluates for $r=r(y)$ and $x=x(y)$ to that
  in~\eqref{eq:hathtzy}.
\end{proof}

A priori $\hat h(t,z(y))$ is defined for $y$ small, but the above
proposition provides an interesting analytic continuation. Indeed,
with the standard determination of the square-root, $x(y)$ extends to
an analytic function over the domain
\begin{equation}
  Y := \C \setminus  \left( (-\infty,-3] \cup [1,+\infty) \right).
\end{equation}
We claim that $|x(y)| < 1$ for all $y \in Y$: indeed, from the relation $x(y)+x(y)^{-1}=\frac3y-1$ it appears that $|x(y)| \neq 1$ for all $y \in Y$, and we have
$|x(y)|<1$ for $y$ small. From~\eqref{eq:hathtzy}, we deduce that $\hat h(t,z(y))$
extends to a bivariate meromorphic function over $\C \times Y$. For a
fixed $y \in Y$, its poles are at
$t=x(y)^{-1}, x(y)^{-2}, x(y)^{-3}, \ldots$.

Now, by~\eqref{eq:Ktgyu}, the function
$(t,y) \mapsto \hat K(t,g(y),u)$ is also meromorphic over
$\C \times Y$. Its singularities in this domain correspond to the solutions to the equation
\begin{equation}
  \varphi_y(t) := \hat h (t, z(y)) - \frac{u}{u-1} = 0.
  \label{eq:ktgyupoles}
\end{equation}
For any fixed $y$ such that $t=0$ is not a root of this equation,
$t \mapsto \hat K(t,g(y),u)$ admits a power series expansion, whose
coefficients are given by the contour integral~\eqref{eq:Kellgyu}
taken over a small circle around the origin. From this integral
representation, we see that the coefficients are holomorphic in
$y$. Noting that
\begin{equation}
  \hat h (0, z(y)) = h_1(z(y)) = \frac{9-3y-y^2}{(3-y)^2},
\end{equation}
is real if and only if $y$ is real, varies between $\frac14$ and
$\frac54$ for $y$ going from $-3$ to $1$, and takes the value $1$ at
$y=0$, we see that the equation $\hat h (0, z(y)) = \frac{u}{u-1}$
admits no solution $y \in Y$ for $u \in [0,5]$, and a unique solution
$y^*\in (0,1)$ for $u>5$ (i.e.\ $\frac{u}{u-1}<\frac54$). We deduce
that, for any $\ell \geq 1$, the function $y \mapsto K_\ell(g(y),u)$
is analytic in $Y$ for $u \leq 5$, and in $Y \setminus \{y^*\}$ for
$u>5$. In this latter case, we may see that the pathological value
$y^*$ corresponds to a pole of $K_\ell(g(y),u)$, but for our purposes
we only need to note that $y^*$ is larger than the saddle-point $\ys$
of \eqref{eq:ysaddle}. This follows from an elementary reasoning using
the relation $\frac{u}{u-1}=\frac{9}{(3-\ys)^2}$ valid in the
supercritical regime.

\subsection{The supercritical case}
\label{ssec:supercritKl}

In this section, we assume $u>9/5$. From the previous discussion, we
see that the integration contour for $y$
in~\eqref{eq:Klnuy_integraldef} can be deformed into the circle
$|y|=\ys \in (0,1)$, as was done in Section~\ref{sssec:noncritQ} to
analyze $Q_n$. We start with a lemma about the analytic properties of
$t \mapsto \hat K(t, g(y), u)$ for $y$ close to $\ys$.
\begin{lem}
  \label{lem:t0yexpansuper}
  Set $\xs := x(\ys) \in (0,1)$, $\zs := z(\ys)$, and
  $\mathcal D:=\{t:|t| \leq \xs^{-1/2}\}$. There exists a neighborhood $N(\ys)$
  of $\ys$ whose each element $y$ is such that the meromorphic
  function $t \mapsto \hat K(t, g(y), u)$ admits in $\mathcal D$ a unique pole
  $t_0(y)$, which is simple and does not belong to $\partial \mathcal D$. For
  $y \to \ys$ we have
  \begin{equation}
    t_0(y) = 1 - C\left( \frac{y}{\ys}-1 \right) + O\left( (y-\ys)^2 \right),
    \qquad C := \frac{18 \, \ys}{(3-\ys)^3 \frac{\partial \hat h}{\partial t}(1,\zs)}.
    \label{eq:t0yexpan}
\end{equation}
\end{lem}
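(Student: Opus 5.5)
The plan is to reduce the pole structure of $t\mapsto \hat K(t,g(y),u)$ to the zeros of $\varphi_y(t)=\hat h(t,z(y))-\frac{u}{u-1}$ from \eqref{eq:ktgyupoles}, and to analyze these zeros first at $y=\ys$ and then by perturbation. The poles of $\hat h(t,z(y))$ themselves are not poles of $\hat K$: by \eqref{eq:Ktgyu}, $\hat K$ is a Möbius transform of $\hat h$, which stays finite where $\hat h=\infty$. Moreover, for $y$ near $\ys$ these poles satisfy $|t|\geq |x(y)|^{-1}>\xs^{-1/2}$, since $|x(y)|\to\xs<\xs^{1/2}$. Hence on $\mathcal D$ the function $\hat h(t,z(y))$ is holomorphic for $y$ in a neighborhood of $\ys$, jointly in $(t,y)$, and the poles of $\hat K$ in $\mathcal D$ are exactly the zeros of $\varphi_y$ there. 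The prefactor $z(y)/g(y)$ is harmless near $\ys$. At a zero, the numerator $\hat h-1=\frac{1}{u-1}\neq0$, so a simple zero of $\varphi_y$ gives a simple pole of $\hat K$.

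\textbf{Step 1: the unperturbed picture at $y=\ys$.} By Remark~\ref{rem:hath_t_eq_1} and \eqref{eq:ryexp}, $\hat h(1,\zs)=r(\ys)^2=\frac{9}{(3-\ys)^2}$, which equals $\frac{u}{u-1}$ in the supercritical regime, as recalled at the end of Section~\ref{ssec:asymptKl_prelim}. So $t=1$ is a zero of $\varphi_{\ys}$. To show it is the only zero in $\mathcal D$, and that it is simple, I would use positivity. All coefficients $k\,\xs^{2k}(1-\xs^k)$ in \eqref{eq:hathexpr} are positive, so $\hat h(t,\zs)=\sum_{\ell}h_\ell(\zs)t^{\ell-1}$ with $h_\ell(\zs)>0$ by \eqref{eq:klFromMinbus}. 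This series converges for $|t|<\xs^{-1}$. Then:
\begin{itemize}
\item For real $t\in(0,\xs^{-1/2}]$, $\hat h$ is strictly increasing, so $t=1$ is the unique positive root and $\partial_t\hat h(1,\zs)>0$, giving simplicity.
\item For $|t|\leq 1$ with $t\neq 1$, $|\hat h(t)|<\hat h(|t|)\leq\hat h(1)$ by strict triangle inequality, since $h_1,h_2>0$ and the terms are not all aligned.
\item For $1<|t|\leq\xs^{-1/2}$, the same comparison does not bound $|\hat h|$ from above. I would instead use \eqref{eq:hhatmeroalt}, $\hat h=r^2\bigl(1-(1-t)S(t)\bigr)$, and show $(1-t)S(t)\neq0$ on the annulus. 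This amounts to $S(t)=\frac{(1-\xs^2)^2}{\xs^2}\sum_k k\,\xs^{3k}/(1-t\xs^k)\neq0$ there. Each term has positive real part when $|t\xs^k|<1$, so $\operatorname{Re}$ of each summand is positive and $S\neq0$.
\end{itemize}
The last item in fact handles all $t\in\mathcal D\setminus\{1\}$ uniformly, because $|t\xs^k|\leq\xs^{1/2}<1$. This is the cleanest route: $\varphi_{\ys}(t)=-r^2(1-t)S(t)$, and $S$ has no zeros on $\mathcal D$. In particular there are no zeros on $\partial\mathcal D$.

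\textbf{Step 2: persistence under perturbation and the expansion.} Since $\varphi_{\ys}$ has no zero on $\partial\mathcal D$ and exactly one simple zero inside, Rouché's theorem (equivalently the argument principle, using continuity of $\varphi_y$ in $y$ uniformly on $\mathcal D$) gives a neighborhood $N(\ys)$ on which $\varphi_y$ has exactly one zero $t_0(y)$ in $\mathcal D$. This zero is simple and stays away from $\partial\mathcal D$. The implicit function theorem makes $t_0$ analytic, with
\[
t_0'(\ys)=-\frac{\partial_y \hat h(t,z(y))}{\partial_t\hat h(t,z(y))}\Big|_{t=1,\,y=\ys}.
\]
For the numerator I would use that $\hat h(1,z(y))=r(y)^2=\frac9{(3-y)^2}$ identically in $y$. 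Its $y$-derivative is $\frac{18}{(3-y)^3}$, which gives $t_0'(\ys)=-\frac{18}{(3-\ys)^3\partial_t\hat h(1,\zs)}$. Writing $y-\ys=\ys(y/\ys-1)$ yields exactly the constant $C$ of \eqref{eq:t0yexpan}.

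The main obstacle I expect is Step 1's uniqueness on all of $\mathcal D$, including $|t|>1$. The positive-real-part trick on the representation \eqref{eq:hhatmeroalt} is what I would try first, and the restriction to radius $\xs^{-1/2}$ is chosen precisely so that every $|t\xs^k|<1$.
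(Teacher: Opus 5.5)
Your proposal is correct and follows the paper's proof. You identify $t=1$ as the zero of $\varphi_{\ys}$ via $\hat h(1,z(y))=r(y)^2$, show by positivity that it is the unique simple zero in $\mathcal D$, propagate to nearby $y$ by Rouch\'e's theorem and the implicit function theorem, and read off $C$ from $\frac{d}{dy}r(y)^2=\frac{18}{(3-y)^3}$. The only variation is in the uniqueness step at $y=\ys$. The paper combines monotonicity on the real axis with the fact that $\varphi_{\ys}(t)$ has imaginary part of the same sign as that of $t$. You instead factor $\varphi_{\ys}(t)=-r^2(1-t)S(t)$ using \eqref{eq:hhatmeroalt} and observe that $\Re S(t)>0$ because $|t\xs^k|<1$ on $\mathcal D$, which gives uniqueness and simplicity at once.
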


\begin{figure}[h]
  \centering
  \includegraphics{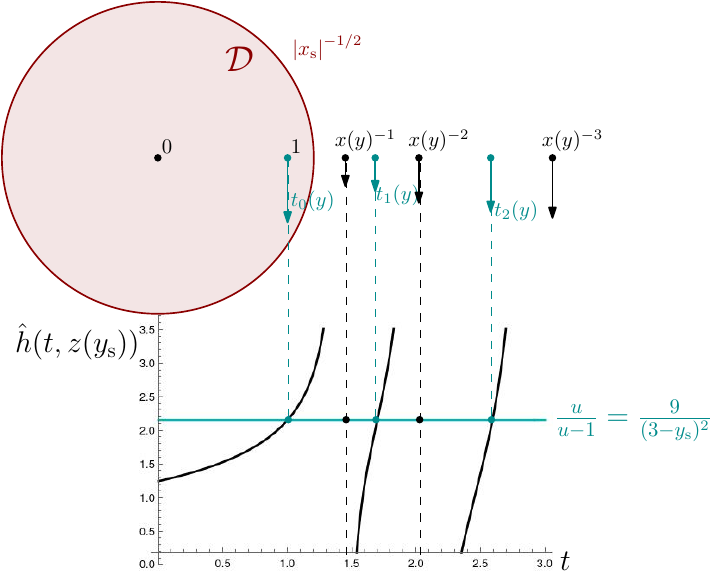}
  \caption{Top: representation in the complex $t$ plane of the domain $\mathcal D$,
    of the poles $x(y)^{-i}$ of $\hat h(t, z(y))$, $i = 1,2,3,\ldots$,
    and of the poles $t_{m}(y)$ of $\hat K(t,g(y),u)$ for $m = 0,1,2,\ldots$ (cyan).
    The dots correspond to $y=\ys$, and the arrows the first order displacement
    for $y = \ys + i \varepsilon$ with small positive $\varepsilon$,
    as in the central arc of the integration contour in $y$.
    Bottom: the (real) values of the function $\hat h(t,z(\ys))$ for real $t$.
    The $t_m(\ys)$ correspond to the crossing of this function
    with the horizontal line with ordinate $\frac{u}{u-1}$.
    Here $u\simeq 1.86$.
    For $y$ in a neighborhood $N(\ys)$,
    there is a unique pole $t_0(y)$ in the domain $\mathcal D$.
  }
  \label{fig:localisation_poles_super}
\end{figure}

\begin{proof}
  See Figure~\ref{fig:localisation_poles_super} for a graphical reference.
  As seen above, the poles of $t \mapsto \hat K(t, g(y), u)$ correspond to the solutions
  of~\eqref{eq:ktgyupoles}, i.e.\ to the zeros of $\varphi_y$.

  Let us first consider the case $y=\ys$. By
  Remark~\ref{rem:hath_t_eq_1},~\eqref{eq:ryexp},
  and~\eqref{eq:ysaddle}, we have
  \begin{equation}
    \hat h (1,z(\ys)) = r(\ys)^2 = \left( \frac{3}{3-\ys} \right)^{2}= \frac{u}{u-1}.
    \label{eq:hath_1_ys}
  \end{equation}
  Thus, $1$ is a zero of $\varphi_{\ys}$. Furthermore,
  by~\eqref{eq:hathtzy}, $\hat h(t, z(\ys))$ is of the form
  $\sum_{k \geq 1} \frac{\beta_k}{1-t \xs^{k}}$ with the $\beta_k$
  positive, which has the following consequences:
  \begin{itemize}
  \item On the real interval $(-\infty,\xs^{-1})$, $\varphi_{\ys}$ is
    increasing and has a nonzero derivative: $1$ is its only (simple)
    zero in this interval.
  \item If $t$ has a non-zero imaginary part, then $\varphi_{\ys}(t)$
    has a non-zero imaginary part of the same sign: there are no zeros of $\varphi_{\ys}$
    outside the real axis.
  \end{itemize}
  This reasoning shows that $t=1$ is the unique, simple, pole of
  $\hat K (t, g(\ys), u)$ inside $\mathcal D$.

  Now, for $y$ close to $\ys$, we argue by deformation.  First, since
  $\frac{\partial \hat h}{\partial t}(t,\zs) = \sum_{k \geq 1}
  \frac{\beta_k \xs^k}{(1-t \xs^{k})^2}$ is positive at $t=1$, the
  implicit function theorem ensures that $\varphi_y$ admits for $y$
  close to $\ys$ a zero $t_0(y)$ close to $1$, varying
  analytically. Let us show that this is the unique zero inside
  $\mathcal D$. Denote by $\tilde{\varphi}_y$ the restriction of $\varphi_y$ to
  $\partial \mathcal D$. For $y$ varying in a small closed ball $B$ around
  $\ys$, the Heine-Cantor theorem entails that the family
  $(\tilde{\varphi}_y)_{y \in B}$ is continuous with respect to the
  uniform norm. Since $|\tilde{\varphi}_{\ys}|$ remains bounded away
  from $0$, we have
  $|\tilde{\varphi}_y-\tilde{\varphi}_{\ys}|<|\tilde{\varphi}_{\ys}|$
  for $y$ in some neighborhood $N(\ys)$ of $\ys$, and Rouché's theorem allows us to conclude that
  $\varphi_y$ and $\varphi_{\ys}$ have then the same number of zeros,
  as wanted.

  The series expansion~\eqref{eq:t0yexpan} is obtained by
  differentiating the relation $\varphi_y(t_0(y))=0$ to yield
  \begin{equation}
    C = - \ys t'_0(\ys) = \frac{\ys}{\frac{\partial \hat h}{\partial t} (1,\zs)} \left. \frac{\partial \hat h(1,z(y))}{\partial y} \right\vert_{y=\ys}.
  \end{equation}
  We get the slightly more explicit expression for $C$ given in the
  statement upon writing
  $\frac{\partial \hat h(1,z(y))}{\partial y}
  \overset{\eqref{eq:hath_t_eq_1}}{=} \frac{d r(y)^2}{d y}
  \overset{\eqref{eq:ryexp}}{=} \frac{18}{(3-\ys)^3}$.
  \end{proof}

From this lemma we get a key estimate for $K_\ell(g(y),u)$ near
$\ys$. Namely, for any $y \in N(\ys)$, deforming the
contour in~\eqref{eq:Kellgyu} and applying the residue theorem, we get
\begin{equation}
  \begin{split}
    K_\ell(g(y),u) &= - \mathrm{Res}_{t=t_0(y)} \frac{\hat K(t,g(y),u)dt}{t^\ell} + \frac{1}{2i\pi} \oint_{\partial \mathcal D} \frac{\hat K(t,g(y),u)dt}{t^\ell} \\
    &= \frac{z(y)}{g(y)} \cdot \frac{\hat h(t_0(y), z(y)) - 1}{(u-1) \frac{\partial \hat h}{\partial t}(t_0(y),z(y))} \cdot \frac{1}{t_0(y)^{\ell}} + O\left( \xs^{\ell/2} \right) \\
    &= \frac{z(y)}{g(y)} \cdot \frac{1}{(u-1)^2 \  \frac{\partial \hat h}{\partial t}(t_0(y),z(y))} \cdot \frac{1}{t_0(y)^{\ell}} + O\left( \xs^{\ell/2} \right)
  \end{split}
  \label{eq:Kellresiduesuper}
\end{equation}
where we get to the last line using $\hat h(t_0(y), z(y)) - 1=\frac{1}{u-1}$. Note that the big $O$ is uniform in any compact subset of $N(\ys)$. It is interesting to see that this estimate allows to read off informally the relevant scaling regime for $\ell$. Indeed, from~\eqref{eq:t0yexpan}, we see that $\ell$ should scale as the inverse of $|y-\ys|$, which itself scales as $\sqrt{n}$ from the discussion of the central approximation in Section~\ref{sssec:noncritQ}. We make this intuition rigorous and precise in the following result.

\begin{prop}
  \label{prop:K_ellnasympt}
  For any $d>0$, we have as $n \to \infty$
  \begin{equation}
    K_{\lfloor d n^{1/2} \rfloor}^{(n)} = \frac{4}{9} \ys \sqrt{\frac3\pi(1-\ys)(3-\ys)} \left( \frac{432}{\ys(6-\ys)^2} \right)^n n^{-1} \times \frac{d}{D^2} e^{-\frac{1}{2} \left( \frac{d}{D} \right)^2} \times \left(1+o(1)\right)
    \label{eq:K_ellnasympt}
  \end{equation}
  with
  $D^2:=\frac{2 c}{C^2} = \left( \frac{\partial \hat h}{\partial
      t}(1,\zs) \right)^2 \frac{(3-\ys)^5(1-\ys)}{216 \, \ys^2}$.
\end{prop}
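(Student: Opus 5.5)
We plan to follow the scheme of Section~\ref{sssec:noncritQ}, applied to the integral representation~\eqref{eq:Klnuy_integraldef} with $\ell=\lfloor d n^{1/2}\rfloor$. As discussed at the end of Section~\ref{ssec:asymptKl_prelim}, for $u>9/5$ the function $y\mapsto K_\ell(g(y),u)$ is analytic in $Y$ (minus $y^*>\ys$ when $u>5$), so we may take the circle $|y|=\ys$ as contour. We then split this circle into a central arc $|\arg y|<n^{\epsilon-1/2}$ and a tail arc.

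\textbf{Tail bound.} On the tail arc we need a bound on $|K_\ell(g(y),u)|$ that is at most subexponential in $n$, uniformly in $\ell$. Since $K_\ell$ has nonnegative coefficients as a series in $g$, we have $|K_\ell(g(y),u)|\le K_\ell(|g(y)|,u)\le \hat K(1,|g(y)|,u)$. Combining Lemma~\ref{lem:circlegood} ($|g(y)|\ge g(\ys)$ on the circle) with Remark~\ref{rem:hath_t_eq_1}, we want to bound this by $\hat K(1,g(\ys),u)$. This is finite, since $g(\ys)$ is the radius of convergence and $\hat K(1,\cdot)$ is a derivative-type series of $Q$, whose coefficients grow like $n^{-1/2}g(\ys)^{-n}$; alternatively we can just use $n Q$-type bounds. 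Hence $|K_\ell|=O(n)$. This is the step I expect to need care: more safely, I will write $|K_\ell(g,u)|\le [\text{coeffs}]$ and use that $K^{(m)}\le 2mQ_m$, giving $|K_\ell(g(y))|\le \sum_m 2mQ_m|g(y)|^m$. Since $|g(y)|\le g(\ys)e^{c'\arg(y)^2}$ fails (it is in the wrong direction), I will instead bound $|K_\ell(g(y))|$ directly from the series at modulus $g(\ys)$: because $|g(y)|$ may exceed $g(\ys)$, I will use the analytic expression~\eqref{eq:Ktgyu} together with compactness away from singularities on the circle. The multiplicative factor $e^{-c' n^{2\epsilon}}$ from~\eqref{eq:tailboundsuper} then kills any polynomial bound, and the tail contributes $o(g(\ys)^{-n}n^{-1})$.

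\textbf{Central arc.} Set $y=\ys e^{isn^{-1/2}}$ with $|s|\le n^\epsilon$. For $n$ large the arc lies in $N(\ys)$, so~\eqref{eq:Kellresiduesuper} applies. The error term $O(\xs^{\ell/2})$ is exponentially small in $n^{1/2}$, hence negligible. By~\eqref{eq:t0yexpan},
\[
t_0(y)^{-\ell}=\exp\bigl(C\ell(e^{isn^{-1/2}}-1)+O(\ell s^2/n)\bigr)=e^{iCds}\bigl(1+o(1)\bigr)
\]
uniformly for $|s|\le n^\epsilon$ with $\epsilon<1/4$. The prefactor tends to the constant
\[
A:=\frac{z(\ys)}{g(\ys)}\,\frac{1}{(u-1)^2\,\partial_t\hat h(1,\zs)}.
\]
We also have $g'(y)/g(y)^{n+1}=g(\ys)^{-n-1}e^{-cs^2}(1+o(1))\,g'(y)$, where $g'(y)\,dy/ds=g''(\ys)\,(i\ys s n^{-1/2})\,(i\ys n^{-1/2})(1+o(1))$. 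This gives a factor $-\ys^2 g''(\ys)\,s/n$. By dominated convergence,
\[
K_\ell^{(n)}\sim \frac{A\,(-\ys^2g''(\ys))}{2i\pi\, g(\ys)^{n+1} n}\int_{\R} s\,e^{-cs^2+iCds}\,ds .
\]
The Gaussian integral equals $i\frac{Cd}{2c}\sqrt{\pi/c}\,e^{-C^2d^2/(4c)}$. This produces the Rayleigh shape $\frac{d}{D^2}e^{-d^2/(2D^2)}$ with $D^2=2c/C^2$.

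\textbf{Constant.} It remains to simplify $A$, $c$ and $g''(\ys)$ using~\eqref{eq:gysaddle},~\eqref{eq:gsecond_ys_supercrit} and Remark~\ref{rem:uysinv}. This should yield the prefactor in~\eqref{eq:K_ellnasympt}, which is $2\times$ the constant of~\eqref{eq:Qn_asymsuper} times $\sqrt n$. That matches the expectation $K^{(n)}\sim 2nQ_n$, and the simplification is routine computer-algebra. The main obstacle is the rigorous tail estimate for $K_\ell(g(y),u)$ uniformly in $\ell$ away from $\ys$. For this I will bound $\hat K(t,g(y),u)$ on the circle $|t|=\rho$ for some fixed $\rho<1$ close to $1$ (taken after the residue at $t_0(y)$ when $y$ is near $\ys$, and with no pole inside $|t|\le1$ elsewhere, by a positivity argument as in Lemma~\ref{lem:t0yexpansuper}). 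This gives $|K_\ell|\le M\max(1,|t_0|^{-\ell})$, i.e.\ at most $e^{O(n^{1/2})}\ll e^{c'n^{2\epsilon}}$ once we choose $\epsilon\in(1/4,\dots)$. I will therefore revisit the choice of $\epsilon$, or refine using $|t_0(y)|\ge1$ on the circle, to make the exponents compatible.
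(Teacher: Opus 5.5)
Your central-arc analysis is the same as the paper's: residue at $t_0(y)$ via \eqref{eq:Kellresiduesuper}, $t_0(y)^{-\ell}\to e^{iCds}$, $\frac{g'(y)}{g(y)}\frac{dy}{ds}\approx 2cs/n$, and the Gaussian integral giving the Rayleigh shape with $D^2=2c/C^2$. For the constant, the paper simply uses that your prefactor $A$ equals $\frac{8\ys(3-\ys)}{9}C$ and that $c=\frac34\frac{1-\ys}{3-\ys}$. Matching $2nQ_n$ is only a sanity check, not a derivation.

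The genuine gap is the tail estimate. You flag it but do not close it, and none of your options works.
\begin{itemize}
\item The positivity bound fails, as you note. Off $y=\ys$ we have $|g(y)|>g(\ys)$, which is the radius of convergence in $g$.
\item For the same reason, and because $x(y)$ is complex, no positivity argument excludes poles of $\hat K$ in $|t|\le 1$ for complex $y$ on the circle. Lemma~\ref{lem:t0yexpansuper} relies on $\ys$ being real.
\item A single bound $|K_\ell(g(y),u)|\le e^{O(\sqrt n)}$ on the whole tail cannot beat the gain $e^{-c'n^{2\epsilon}}$ available at the inner edge of the tail. Your own central approximation forces $\epsilon<1/6$ (uniform error in \eqref{eq:gyncentral}) and $\epsilon<1/4$ (expansion of $t_0^{-\ell}$). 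Enlarging the central arc to $\epsilon>1/4$ only moves the problem: you would then need a dominating bound on $|t_0(y)|^{-\ell}$ for $|s|$ up to $n^{\epsilon}$.
\item $|t_0(y)|\ge 1$ on the circle does not follow from \eqref{eq:t0yexpan}; it depends on the unknown second-order coefficient.
\item With a circle $|t|=\rho<1$ there is no residue to extract, since $t_0(y)\approx 1$ lies outside it. The paper integrates over $\partial\mathcal D$, of radius $\xs^{-1/2}>1$.
\end{itemize}

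The missing idea is to split the tail into two arcs and use the residue formula quantitatively near $\ys$.

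On the near tail $n^{\epsilon-1/2}<|\arg y|<a$, chosen inside $N(\ys)$, the first-order term $\frac{y}{\ys}-1=i\arg(y)+O(\arg(y)^2)$ in \eqref{eq:t0yexpan} is purely imaginary. Hence $\ln|t_0(y)|\ge -c''\arg(y)^2$, i.e.\ $|t_0(y)|^{-\ell}\le e^{c''\ell\arg(y)^2}$. Combining this with \eqref{eq:Kellresiduesuper} and \eqref{eq:gycabound}, the integrand is $O\bigl(e^{(-c'n+c''\ell)\arg(y)^2}g(\ys)^{-n}\bigr)$. Since $\ell=O(\sqrt n)=o(n)$, this is $O\bigl(g(\ys)^{-n}e^{-c'n^{2\epsilon}/2}\bigr)$. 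The loss from $t_0^{-\ell}$ is quadratic in the angle, like the gain from $g^{-n}$, but with coefficient $\ell\ll n$.

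On the far tail $|\arg y|>a$, a crude bound suffices. By compactness, $t\mapsto \hat K(t,g(y),u)$ is holomorphic on $|t|\le\tau$ for some small $\tau>0$, with no positivity needed. So $K_\ell(g(y),u)=O(\tau^{-\ell})=e^{O(\sqrt n)}$. This is beaten by the factor $\bigl(g(\ys)/|g(\ys e^{ia})|\bigr)^n$, which is exponentially small in $n$ by Lemma~\ref{lem:circlegood}.
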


\begin{proof}
  Let us set $\ell := \lfloor d n^{1/2} \rfloor$ throughout this
  proof. Recall that $K_{\ell}^{(n)}$ is given by the contour integral
  representation~\eqref{eq:Klnuy_integraldef}, taken along the circle
  $|y|=\ys$. To analyze this integral, we proceed similarly to
  Section~\ref{sssec:noncritQ}, but since the integrand has a more
  complicated form, we now split the circle into \emph{three} parts:
  \begin{itemize}
  \item the \emph{central arc} given again by
    $\left\lvert\arg(y)\right\rvert<n^{\epsilon-1/2}$, for some
    $\epsilon \in (0,\frac16)$,
  \item a \emph{near tail arc} remaining with the neighborhood
    $N(\ys)$ of Lemma~\ref{lem:t0yexpansuper}: we take it of the form
    $n^{\epsilon-1/2} < \left\lvert\arg(y)\right\rvert < a$ for some
    fixed $a$ independent of $n$ and $\ell$,
  \item a remaining \emph{far tail arc} given by $\left\lvert\arg(y)\right\rvert > a$.
  \end{itemize}
  
  \paragraph{Tail bounds.} Let us show that the contributions of the
  tail arcs to~\eqref{eq:Klnuy_integraldef} are exponentially smaller
  than $g(\ys)^{-n}$.
  
  We start with the far tail arc: by a compactness argument we may find
  $\tau>0$ (possibly smaller than $1$) such that
  $t \mapsto \hat K(t,g(y),u)$ is holomorphic in the disk
  $|t|\leq \tau$ for all $y$ on the arc. By~\eqref{eq:Kellgyu} we
  deduce that $K_\ell(g(y),u)=O(\tau^{-\ell})$ uniformly in $y$. Using
  Lemma~\ref{lem:circlegood}, we find that the contribution of
  the far tail arc is $O\left(\tau^{-\ell} g(\ys e^{ia})^{-n}\right)$
  which, since $\ell$ is taken of order $\sqrt{n}$, is indeed
  exponentially smaller than $g(\ys)^{-n}$.

  As for the near tail arc, we use the
  estimate~\eqref{eq:Kellresiduesuper}. The error term
  $O(\xs^{\ell/2})$ is of no concern. In the series
  expansion~\eqref{eq:t0yexpan}, the factor $\frac{y}{\ys}-1$ is
  purely imaginary at first order as we follow the circle $|y|=\ys$,
  and therefore we may find a constant $c''$ such that
  $\ln |t_0(y)| \geq -c'' \arg(y)^2$ for all $y$ on the near tail arc.
  Combining with~\eqref{eq:gycabound} and~\eqref{eq:Kellresiduesuper},
  we find that the modulus of the integrand
  in~\eqref{eq:Klnuy_integraldef} is bounded by a constant times
  $e^{(-c' n+c'' \ell) \arg(y)^2} g(\ys)^{-n}$. As
  $\arg(y)^2 \geq n^{2\epsilon-1}$, the contribution of the near tail
  arc is indeed exponentially smaller than $g(\ys)^{-n}$.

  \paragraph{Central approximation.} As in Section~\ref{sssec:noncritQ},
  we perform the change of variable $y=\ys e^{i \, s \, n^{-1/2}}$
  with $s$ ranging in the interval
  $[-n^{\epsilon},n^{\epsilon}]$. From the
  expansion~\eqref{eq:t0yexpan}, we find that
  \begin{equation}
    t_0(y)^{-\ell} = e^{i\, C \, d \, s} + O(n^{2\epsilon-1/2})
  \end{equation}
  where the error is uniform in $y$. Plugging into the
  estimate~\eqref{eq:Kellresiduesuper}, we get
  \begin{equation}
    \begin{split}
      K_\ell(g(y),u) &= \frac{z(\ys)}{g(\ys)} \cdot \frac{1}{(u-1)^2 \  \frac{\partial \hat h}{\partial t}(1,z(\ys))} \cdot e^{i\, C \, d \, s} + O(n^{2\epsilon-1/2}) \\
      &= \frac{8 \ys (3-\ys)}9 \cdot C e^{i\, C \, d \, s} + O(n^{2\epsilon-1/2})
    \end{split}
  \end{equation}
  where the prefactor is simplified using the above expressions for
  $z(\ys),g(\ys),u,C$ in terms of $\ys$.  Combining
  with~\eqref{eq:gyncentral}, noting furthermore that
  \begin{equation}
    \frac{g'(y)}{g(y)} \cdot \frac{dy}{ds} = \frac{d \log g(y)}{ds} = \frac{2 c s}{n} + O(n^{2\epsilon-1}),
    \label{eq:gy_derivlog_trick}
  \end{equation}
  with $c$ as in Section~\ref{sssec:noncritQ}, and invoking again the
  dominated convergence theorem, we find that the contribution of the
  central arc to~\eqref{eq:Klnuy_integraldef} is asymptotically
  equivalent to
  \begin{equation}
    \frac{8 \ys (3-\ys)}{9i\pi\, n\, g(\ys)^n}\int_{-\infty}^\infty cs \, C e^{i\, C \, d \, s - c s^2} ds = \frac{8 \ys (3-\ys)}{9\, n\, g(\ys)^n} \sqrt{\frac{c}{\pi}} \times \frac{d}{D^2} e^{-\frac{1}{2} \left( \frac{d}{D} \right)^2}
  \end{equation}
  with $D^2=\frac{2c}{C^2}$ as in the statement of the theorem. Noting
  finally that $c=\frac34 \cdot \frac{1-\ys}{3-\ys}$ in the
  supercritical phase, we get precisely the right-hand side
  of~\eqref{eq:K_ellnasympt}. As seen above the tail arcs give a
  negligible contribution, hence the result is established.
\end{proof}

\begin{rem}
  It appears from the above proof that the $o(1)$ appearing
  in~\eqref{eq:K_ellnasympt} is uniform for $d$ varying in any compact
  subset of $(0,\infty)$.
\end{rem}

\begin{cor}  
  The distance profile $\rho_u(d)$ is given by
  \begin{equation}
    \rho_u(d) = \lim_{n\to \infty} n^{1/2} \frac{K_{\lfloor d \, n^{1/2} \rfloor}^{(n)}(u)}{K^{(n)}(u)} = \frac{d}{D^2} e^{-\frac{1}{2} \left( \frac{d}{D} \right)^2}
    \label{eq:rho_super}
  \end{equation}
  which is a well normalized probability distribution on $[0,\infty)$.
\end{cor}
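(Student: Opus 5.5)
The plan is to combine Proposition~\ref{prop:K_ellnasympt} with an asymptotic estimate for the normalization $K^{(n)}=[g^n]\hat K(1,g,u)$. The latter can be obtained with the tools of Section~\ref{sssec:noncritQ}, applied to the representation~\eqref{eq:hatK1_combi}:
\begin{equation*}
\hat K(1,g,u)=2g\,\partial_g Q-Q-uQ^2 .
\end{equation*}
Extracting the coefficient of $g^n$ gives $K^{(n)}=(2n-1)Q_n-u[g^n]Q^2$. The dominant term is $2nQ_n$, provided $[g^n]Q^2$ is $O(Q_n)$.

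To control $[g^n]Q^2$, I will write it as a contour integral in $y$ via Proposition~\ref{prop:Qratparam}, exactly as in~\eqref{eq:Qnintysimp}. Integrating by parts gives
\begin{equation*}
[g^n]Q^2=\frac{1}{2i\pi n}\oint \frac{\frac{d}{dy}\bigl(y(2-y)/3\bigr)^2}{g(y)^n}\,dy ,
\end{equation*}
and Proposition~\ref{prop:genericsaddle} then yields order $g(\ys)^{-n}n^{-3/2}$, the same as $Q_n$. Hence
\begin{equation*}
K^{(n)}\sim 2nQ_n=\tfrac49\ys\sqrt{\tfrac3\pi(1-\ys)(3-\ys)}\,g(\ys)^{-n}n^{-1/2},
\end{equation*}
using~\eqref{eq:Qn_asymsuper}.

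Dividing~\eqref{eq:K_ellnasympt} by this, the common prefactor $\tfrac49\ys\sqrt{\cdots}\,g(\ys)^{-n}$ cancels and $n^{-1}/n^{-1/2}=n^{-1/2}$. Multiplying by $n^{1/2}$ leaves exactly $\frac{d}{D^2}e^{-d^2/(2D^2)}$, which is~\eqref{eq:rho_super}; the preceding remark gives uniformity on compacts. Normalization is the elementary integral $\int_0^\infty \frac{d}{D^2}e^{-d^2/2D^2}\,dd=1$.

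The only step needing care is the consistency check that the constants in Proposition~\ref{prop:K_ellnasympt} indeed match $2nQ_n$. As a sanity check, $\sum_\ell K^{(n)}_\ell\approx n^{1/2}\int \cdot\,dd$ should reproduce $K^{(n)}$, and it does: $n^{-1}\cdot n^{1/2}\cdot 1$ matches the leading constant. This confirms there is no missing mass at scales other than $\sqrt n$, though the corollary as stated only requires the pointwise ratio.
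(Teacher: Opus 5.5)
Your proposal is correct and follows the paper's proof. You write $K^{(n)}=(2n-1)Q_n-u[g^n]Q^2\sim 2nQ_n$, use~\eqref{eq:Qn_asymsuper}, and divide the asymptotics of Proposition~\ref{prop:K_ellnasympt} by this. The only cosmetic difference is how $[g^n]Q^2=O(Q_n)$ is justified: you use an explicit saddle-point computation via Proposition~\ref{prop:genericsaddle}, with $f(y)=\frac49 y(2-y)(1-y)$ nonzero at $\ys\in(0,1)$, whereas the paper just notes that $Q^2$ has the same square-root singularity as $Q$.
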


\begin{proof}
From~\eqref{eq:hatK1_combi}, we get the estimate 
\begin{equation}
  K^{(n)} = (2n-1) \, Q_n - u [g^n] Q(g,u)^2 \sim 2n \, Q_n
\end{equation}
Indeed, $Q(g,u)^2$ has the same square-root singularity as $Q(g,u)$ for $g \to g(\ys)$,
hence $\frac{[g^n] Q(g,u)^2}{Q_n} = O(1)$.
Note that we could also obtain the asymptotics for $K^{(n)}$ from Remark~\ref{rem:hath_t_eq_1}
with a contour integral, as in Section~\ref{sec:asymptQ}.
Then, from Equation~\eqref{eq:Qn_asymsuper} of Proposition~\ref{prop:Qn_asym},
we find 
\begin{equation}
  K^{(n)} \sim \frac{4}{9} \ys \sqrt{\frac3\pi(1-\ys)(3-\ys)} \left( \frac{432}{\ys(6-\ys)^2} \right)^n n^{-1/2},
\end{equation}
and the expression for the distance profile $\rho_u(d)$ in \eqref{eq:rho_super} and \eqref{eq:distanceprofile_supercrit} follows.
It is easily checked that $\rho_u(d)$ integrates to $1$.
\end{proof}

\subsection{Approximations of the auxiliary function $\hat{h}$ near its singularity}
\label{ssec:hsingtext}

We now consider the case $u \leq 9/5$. This poses a new challenge,
since the saddle point of the function $g(y)$, ruling the large $n$
behavior of the integral~\eqref{eq:Klnuy_integraldef}, is at $\ys=1$,
which corresponds to a singularity of the bivariate function
$\hat h(t,z(y))$. Namely, from Proposition~\ref{prop:hathzy} we see
that $x(y)$ is singular and tends to the value $1$, which means that
the poles $(x(y)^{-k})_{k \geq 1}$ of $t \mapsto \hat h(t,z(y))$
densify along a logarithmic spiral. The purpose of this section is to
provide precise approximations of the function $\hat h$ in this
limit. We only state them here, the proofs being given in
Appendix~\ref{app:hsing}. It is convenient to express the results in
terms of the quantity
\begin{equation}
  \delta:=\sqrt{3(1-y)}
\end{equation}
which is such that $y=1-\delta^2/3$ and $x(y)=1-\delta+O(\delta^2)$.

We start by the case where $t$ is fixed: it is important to note that
the limit depends on the angle at which $\delta$ approaches $0$. More
precisely, we fix an arbitrary $\alpha \in (-\frac\pi2,\frac\pi2)$ and assume that
$\arg \delta \to \alpha$, i.e.\ $\arg(1-y) \to 2\alpha$.
Note that we do not consider the situation where $y$ approaches $1$
tangentially to the half-line $[1,+\infty)$.  
\begin{prop}
  Assume that $t$ does not belong to the curve
  \label{prop:hexptfin}
  $\mathcal S_\alpha:=\{ e^{e^{i\alpha} v}, v > 0\}$.
  Then, as $\delta \to 0$ with $\arg\delta \to \alpha$, we have
  \begin{equation}
    \label{eq:hexptfin}
     \hat h\left(t,z(1-\delta^2/3)\right) = H_\alpha(t) + O(\delta)
  \end{equation}
  where
  \begin{equation}
    \label{eq:Halphadef}
    H_\alpha(t) := \frac94 + 9(t-1) \Phi_\alpha(t), \qquad
    \Phi_\alpha(t) := \int_{e^{i\alpha} \R_+} dw \frac{w e^{-3 w}}{1-t e^{-w}}.
  \end{equation}
  The error term $O(\delta)$ above is uniform in $t$ within any
  compact subset of $\C \setminus \mathcal S_\alpha$. Furthermore,
  under the assumption that $\arg \delta=\alpha+O(\delta)$, the
  approximation still holds for $t$ approaching $1$ in such a way that
  $|t-1| \gg |\delta|$ and that $\arg(t-1)$ remains away from
  $\alpha$.
\end{prop}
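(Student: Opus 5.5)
We start from the alternative representation \eqref{eq:hhatmeroalt}, rewritten via Proposition~\ref{prop:hathzy}:
\[
\hat h(t,z(y)) = r(y)^2\Bigl(1-(1-t)\frac{(1-x^2)^2}{x^2}\sum_{k\geq1}k\frac{x^{3k}}{1-tx^k}\Bigr),\qquad r(y)=\frac{3}{3-y},\ x=x(y).
\]
At $y=1-\delta^2/3$ we have $r^2=9/(2+\delta^2/3)^2=\frac94+O(\delta^2)$. Writing $x=e^{-\eta}$, we get $\eta=\delta+O(\delta^2)$, analytically in $\delta$, since $x(y)=1-\delta+O(\delta^2)$. Also $(1-x^2)^2/x^2=4\eta^2(1+O(\eta^2))$. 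The claim therefore reduces to showing that
\[
S(\eta):=\eta^2\sum_{k\geq1}k\frac{e^{-3k\eta}}{1-te^{-k\eta}}=\Phi_\alpha(t)+O(\eta),
\]
uniformly for $t$ in compacts avoiding $\mathcal S_\alpha$. The constants then match: $-\frac94\cdot(1-t)\cdot4\,\Phi_\alpha=9(t-1)\Phi_\alpha$. Since $\arg\eta\to\alpha$, we set $\eta=|\eta|e^{i\alpha'}$ with $\alpha'\to\alpha$.

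The key step is to view $S(\eta)$ as a Riemann sum. With $F(w):=\frac{we^{-3w}}{1-te^{-w}}$ we have $S(\eta)=\eta\sum_{k\geq1}F(k\eta)$, a Riemann sum with complex step $\eta$ along the ray $e^{i\alpha'}\R_+$. The poles of $F$ sit at $w\in\log t+2i\pi\Z$. Because $t\notin\mathcal S_\alpha$ lies in a compact set $K$, no pole lies on the ray $e^{i\alpha}\R_{>0}$. There may be a pole at $w=0$ when $t=1$, but $F$ has the factor $w$, which cancels the simple pole, so $F$ is analytic near $0$. Hence for $\alpha'$ close to $\alpha$ the function $F$ is analytic in a fixed sector-neighbourhood of the ray. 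On this ray $F$ decays like $e^{-3\Re w}$, with $\Re w=|w|\cos\alpha'$ and $\cos\alpha>0$.

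We then apply Euler--Maclaurin to $G(s):=F(se^{i\alpha'})$, $s\geq0$. This gives
\[
\eta\sum_{k\geq1}G(k|\eta|)=\int_0^\infty F\,dw-\frac{\eta}{2}F(0)+O\Bigl(|\eta|^2\int_0^\infty|G''|\Bigr).
\]
Here $F(0)=0$ when $t\neq1$, and $F(0)=1$ when $t=1$; either way the boundary term is $O(\eta)$. The integrand $G''$ is integrable uniformly in $t\in K$ and $\alpha'$ near $\alpha$, by the exponential decay and the distance from the poles. Finally, Cauchy's theorem rotates the ray $e^{i\alpha'}\R_+$ back to $e^{i\alpha}\R_+$, since no poles lie in the thin sector between them and there is decay at infinity. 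This yields $S=\Phi_\alpha(t)+O(\eta)$, so the error is $O(\delta)$ uniformly on compacts. This gives \eqref{eq:hexptfin}.

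The main obstacle is the final claim, where $t\to1$ with $|t-1|\gg|\delta|$ and $\arg(t-1)$ stays away from $\alpha$. In this regime the pole $w_0=\log t\approx t-1$ of $F$ approaches the origin, so the uniform bound on $G''$ fails near $0$. Here we rely on the prefactor $(1-t)$ and on the assumption $\arg\delta=\alpha+O(\delta)$, which lets us take $\alpha'=\alpha$ up to an $O(\delta)$ rotation. We split $F=\frac{w}{w-w_0}\cdot\frac{e^{-3w}(w-w_0)}{1-te^{-w}}$. The second factor is smooth, so only the singular piece $\frac{w}{w-w_0}$ needs care. For it, the error in the Riemann sum is controlled by $|\eta|^2\int|\partial_w^2(\cdot)|\sim|\eta|^2/\operatorname{dist}(w_0,\text{ray})\sim|\eta|^2/|t-1|$. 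Here the distance is $\asymp|t-1|$ because $\arg(t-1)$ is bounded away from $\alpha$. After multiplying by $(t-1)$, this error is $O(|\eta|^2)$; together with the boundary terms, which contribute $O(\eta)$, the remainder of $(t-1)S$ is $o(1)$, in fact $O(\delta)$. I would first try to make this precise with the exact formula $\sum_k\frac{1}{k-a}$-type comparisons, namely a digamma/partial-fraction estimate of the sum against the integral. If that becomes messy, I would fall back on the Euler--Maclaurin remainder with the explicit bound above.
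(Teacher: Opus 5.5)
Your proposal is correct and follows essentially the same route as the paper's appendix: the rewriting $\hat h=\frac{9}{(3-y)^2}\bigl(1-4(1-t)\phi(t,x)\bigr)$ with $x=e^{-\eta}$ (the paper's $x=e^{-\omega\ve}$), an Euler--Maclaurin comparison of the sum with the integral along the ray of argument $\arg\eta$ followed by a rotation back to $e^{i\alpha}\R_+$, and, for $t\to1$, an error bound of order $1/|t-1|$ compensated by the prefactor $(1-t)$. The differences are cosmetic: you use the second-order remainder $|\eta|^2\int|G''|$ where the paper uses the first-order one $\frac{\ve}{2}\int|f'|$, and you correctly note that the boundary term does not vanish at $t=1$ (the paper states $f(0)=0$), which is harmless since that term is $O(\ve)$.
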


\begin{rem} \label{rem:Htexp} The functions $\Phi_\alpha(t)$ and
  $H_\alpha(t)$ are analytic continuations to the domain
  $\C \setminus \mathcal S_\alpha$ of the respective series
  \begin{equation}
    \label{eq:Htexp}
    \Phi(t) := \sum_{n=0}^\infty \frac{t^n}{(n+3)^2} = \frac{\Li_2(t)-t-\frac{t^2}4}{t^3}, \qquad
    H(t) := \sum_{\ell=1}^\infty \frac{9(2\ell+3)}{(\ell+1)^2 (\ell+2)^2} t^{\ell-1}
      \end{equation}
  which have radius of convergence $1$ (here $\Li_2$ is the
  dilogarithm).  The coefficient of $t^{\ell-1}$ in $H(t)$ is nothing
  but the limiting value of $h_\ell(z)$ at $z=z(1)=\frac{4}{27}$, as
  may be seen from \eqref{eq:klFromMinbus} (take $r=3/2$ and
  $x \to 1$). Deducing that $\hat h(t,z)$ tends to $H(t)$ for $|t|<1$
  requires a justification for the exchange of limit and summation,
  given in Appendix~\ref{app:hsing}. We note finally that, since all
  the coefficients of $H(t)$ are positive, we have
  \begin{equation}
    |H(t)| < H(1)=\frac94, \qquad \text{if } |t|\leq 1 \text{ and } t\neq 1
    \label{eq:Daffodil_crit}
  \end{equation}
  as seen for instance by the Daffodil Lemma~\cite[Lemma
  IV.1]{Flajolet2009}.
\end{rem}

We then consider the case where $t$ is at a distance of order $\delta$
from $1$:
\begin{prop}
  Given $\tau \in \C \setminus \Z_{>0}$, we have as $\delta \to 0$ 
  \label{prop:hexptsmall}
  \begin{multline}
    \label{eq:hexptsmall}
  \hat h\left(1+\tau \delta,z(1-\delta^2/3)\right) = \frac{9}{4} + \frac34 (2\pi^2-15) \tau \delta - 9 \tau^2 \delta^2 \ln \delta \\
  -9 \left( \tau^2 \psi(-\tau) + \frac{2\pi^2-13}{4} \tau^2 - \frac\tau2 + \frac1{12} \right) \delta^2 + O\left(\delta^{7/3}\right)
\end{multline}
where $\psi=\frac{\Gamma'}{\Gamma}$ is the digamma function, the
logarithmic derivative of the gamma function. The error term
$O(\delta^{7/3})$ is uniform for $\tau$ in any compact subset of
$\C \setminus \Z_{>0}$. It furthermore remains negligible for $\tau$
mildly large in the following sense: for any $b \in (0,\frac15)$ and
$\vr>0$, the above expansion holds with an error term
$O(\delta^{(7-5b)/3})$ uniform over all $\tau$ such that
$|\tau| \leq \delta^{-1/b}$ and $|\tau-k| \leq \vr$ for all $k \in \Z_{>0}$.
\end{prop}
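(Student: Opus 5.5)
We start from the explicit form of Proposition~\ref{prop:hathzy}, rewritten with \eqref{eq:hhatmeroalt}:
\[
\hat h(t,z(y)) = r(y)^2\Bigl(1-(1-t)\,\frac{(1-x^2)^2}{x^2}\sum_{k\ge1}\frac{k\,x^{3k}}{1-t x^k}\Bigr),\qquad x=x(y),\quad r(y)=\frac{3}{3-y}.
\]
With $y=1-\delta^2/3$ we have $r^2 = 9/(2+\delta^2/3)^2=\frac94-\frac34\delta^2+O(\delta^4)$. We write $x=e^{-\eta}$, where $\eta=\delta+O(\delta^3)$ is an explicit odd analytic function of $\delta$; this follows from $x+1/x = 3/y-1$, which gives $2\cosh\eta = 2+\delta^2+O(\delta^4)$. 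The prefactor $(1-x^2)^2/x^2=(2\sinh\eta)^2=4\eta^2+\frac43\eta^4+\dots$ is explicit. With $t=1+\tau\delta$, everything therefore reduces to the asymptotics of
\[
S(\eta,t):=\sum_{k\ge1}\frac{k\,e^{-3k\eta}}{1-t e^{-k\eta}},
\]
which has to be known up to relative order $\delta^{2}\ln\delta$, since it is multiplied by $(1-t)\cdot 4\eta^2=-4\tau\delta\eta^2$.

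\textbf{Main step: Mellin analysis of the sum.} We set $1-te^{-k\eta}=1-e^{-k\eta}-\tau\delta e^{-k\eta}$. For $k\eta\ll1$ this is approximately $\eta(k-\tau\delta/\eta)$, which produces the poles at $\tau\approx k$ and explains the exclusion $\tau\notin\Z_{>0}$. We split the sum at $k\approx K:=\delta^{-2/3}$ (this choice is what should yield the $\delta^{7/3}$ error).

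For $k\le K$ we Taylor expand exactly: $\frac{k e^{-3k\eta}}{1-te^{-k\eta}} = \frac{1}{\eta}\cdot\frac{k}{k-\tau'}\,(1+O(k\eta))$, where $\tau' := \tau\delta/\eta\cdot(\text{correction})$ is determined by the expansion of $(1-e^{-k\eta})/(\eta e^{-k\eta})$. The sum $\sum_{k\le K}\frac{k}{k-\tau}=K+\tau\sum_{k\le K}\frac1{k-\tau}$ gives $\tau(\ln K-\psi(1-\tau))+\dots$. Using $\psi(1-\tau)=\psi(-\tau)-1/\tau$ produces the $\psi(-\tau)$ and $\ln\delta$ terms.

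For $k> K$ we expand in powers of $\tau\delta$: $\frac{1}{1-te^{-k\eta}}=\sum_j \frac{(\tau\delta)^j e^{-jk\eta}}{(1-e^{-k\eta})^{j+1}}$. Each term is a smooth sum approximated by Euler–Maclaurin or Mellin, namely $\eta^{-2}\int_{K\eta}^\infty \frac{w e^{-3w}}{1-e^{-w}}\,dw$ and its analogues. The $j=0$ integral gives the constant $\frac94$ via $\zeta(2)$-type values (hence the $\pi^2$), and $j=1,2$ give the linear term $\frac34(2\pi^2-15)\tau\delta$ and the $\delta^2$ corrections. The dependence on the cutoff $K$ must cancel between the two pieces; we use this cancellation as a check. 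Truncation errors are of order $(K\eta)^2\cdot\delta$, $1/K$-type boundary terms times $\delta^2$, and $(\tau\delta)^3/(K\eta)^{3}$-type remainders; balancing these gives the $O(\delta^{7/3})$ bound.

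\textbf{Uniformity and the main obstacle.} The hard part is the bookkeeping of the double expansion, in particular getting the constant $-\frac{2\pi^2-13}{4}$ and the $\frac1{12}$ correctly. It comes from the Euler–Maclaurin boundary terms and from the discrepancy between $\delta$ and $\eta$, and errors there are easy to make. An alternative we would try first for the constant terms is to use the Mellin transform in $\eta$ of $\sum_k k e^{-3k\eta}(1-e^{-k\eta})^{-j-1}e^{-jk\eta}$, reading the residues at $s=2,1,0$ and the double pole giving $\ln\eta$. For the uniform statement with $|\tau|\le\delta^{-1/b}$ and distance $\ge\vr$ from $\Z_{>0}$, we would redo the error estimates keeping track of powers of $|\tau|$:
\begin{itemize}
\item the small-$k$ remainder terms gain factors of $|\tau|$, which forces moving the cutoff to $K\approx\delta^{-2/3}|\tau|$-dependent;
\item the large-$k$ series in $\tau\delta$ converges only when $|\tau|\delta\ll K\eta$.
\end{itemize}
Choosing $K$ accordingly gives an error of $O(\delta^{(7-5b)/3})$ under the constraint $b<1/5$. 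The bound $|1/(k-\tau)|\le 1/\vr$ controls the near-resonant terms, and the digamma asymptotics, which hold uniformly away from the poles, control the $\psi(-\tau)$ term.
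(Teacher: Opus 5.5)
Your route is the paper's own (Appendix~\ref{app:hsing}): cut the sum at $K$, expand the terms $k<K$ termwise so that they reduce to $\sum_{k<K}\frac{k}{k-\tau}=K+\tau\psi(K-\tau)-\tau\psi(-\tau)$, handle $k\ge K$ by Euler--Maclaurin after expanding in $t-1$, check that $K$ drops out, and balance the errors. The paper avoids your unspecified $\tau'$ by writing $x=e^{-\omega\ve}$ and $t=x^{-\tau}$ exactly. The summand is then $\frac{k e^{-3\omega\ve k}}{1-e^{-\omega\ve(k-\tau)}}$, with poles exactly at $\tau\in\Z_{>0}$, and going back to $t=1+\tau\delta$ only affects $\hat h$ at order $\delta^3\ln\delta$.

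The concrete problem is your cutoff $K=\delta^{-2/3}$, which makes the argument fail as stated.
\begin{itemize}
\item With the first-order expansion $1+O(k\eta)$ on $k\le K$, the accumulated error in $S$ is $O(K^2)$ for bounded $\tau$.
\item After the factors $4\eta^2$ and $1-t=-\tau\delta$, this is $O(\delta^3K^2)$ in $\hat h$ (your own ``$(K\eta)^2\delta$'').
\item For $K=\delta^{-2/3}$ that is $\delta^{5/3}$, which swamps the $\delta^2\ln\delta$ and $\delta^2$ terms you are trying to identify.
\end{itemize}
Balancing it against the large-$k$ remainder $O(\delta^2/K)$ forces $K\sim\delta^{-1/3}$ and yields $O(\delta^{7/3})$. (That remainder is the second-order term in $t-1$, whose integrand behaves like $v^{-2}$ at the lower limit $K\eta$.) For $|\tau|\le T$ the paper takes $K\sim(T/\ve)^{1/3}$ and gets $O(\ve^{4/3}T^{5/3})$ for the normalized sum $\phi$. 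Requiring this to be $o(\ve)$ gives $T\ll\ve^{-1/5}$, which is where $b<\frac15$ comes from.

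Your attribution of the terms is also off, which is why the constants look harder than they are.
\begin{itemize}
\item $\frac94$ and $-9\cdot\frac1{12}\delta^2$ are just $r^2=\frac94-\frac34\delta^2$, which you already have.
\item The $j=0$ integral $\int_0^\infty\frac{we^{-3w}}{1-e^{-w}}dw=\frac{2\pi^2-15}{12}$ produces the linear term, through the factor $1-t$.
\item The $j=1$ integral, combined with the $\tau\ln K$ from $\psi(K-\tau)$, produces $-9\tau^2\delta^2\ln\delta$ and the $\frac{2\pi^2-13}{4}$ term.
\item The $-\frac\tau2$ comes from the Euler--Maclaurin boundary term $f(K)/2$.
\item At the correct cutoff, $j=2$ contributes only error terms.
\item $\eta-\delta=O(\delta^3)$ plays no role at this order.
\end{itemize}

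Finally, the uniform clause should read $|\tau|\le\delta^{-b}$ and $\min_{k\in\Z_{>0}}|\tau-k|\ge\vr$; this is $T=\delta^{-b}$ in the appendix. With $\delta^{-1/b}$, the point $t$ need not even be near $1$. When you track powers of $|\tau|$, remember that the error in $\phi$ is multiplied by $1-t=-\tau\delta$. The appendix's bound $O(\delta^{(4-5b)/3})$ on $\phi$ then gives $O(\delta^{(7-8b)/3})$ for $\hat h$, not the displayed $O(\delta^{(7-5b)/3})$, because the paper multiplies by $\delta$ only. This weaker bound still suffices for Lemma~\ref{lem:subcritTPF} with $b=\frac1{16}$, where only $o(\delta^3)$ is needed.
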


It is worthwhile to note that the two expansions above are consistent
with one another. Indeed, the function $H_\alpha(t)$ satisfies for
$t=1+\tau \delta$
\begin{equation}
  \label{eq:hphiexp}
  H_\alpha(t) = \frac94 + \frac34 (2\pi^2-15) \tau \delta
  - 9 \tau^2 \delta^2 \ln (-\tau \delta)
  -\frac94 (2\pi^2-13) \tau^2 \delta^2 + o(\delta^2),
\end{equation}
with the cut of $\ln$ depending appropriately on $\alpha$. This
expansion may be recovered from~\eqref{eq:hexptsmall} by taking
$\tau \to \infty$ keeping $\tau \delta$ constant, and using
$\psi(-\tau) \sim \ln (-\tau)$. We note that the difference
between~\eqref{eq:hexptsmall} and \eqref{eq:hphiexp} reads
\begin{equation}
  \hat h(t,z(y)) - H_\alpha(t) = \left( 9 \tau^2 \left(\ln(-\tau) - \psi(-\tau)\right) + \frac{9\tau}{2} - \frac34 \right) \delta^2 + o(\delta^2) 
\end{equation}
which matches~\cite[Equation~(3.28)]{quadwithnoME} that reads, in our
present notations,
\begin{equation}
  \hat h\left( t,z(y) \right) - H_\alpha(t)
  \sim \delta^2 \int_{0}^{\infty}dL e^{L \tau}
  \left( \frac{9}{4} \cdot
    \frac{\cosh (L/2)}{\sinh^3 (L/2)} - \frac{18}{L^3} \right), \qquad
  \Re(\tau)<0.
\end{equation}

We complete our analysis of the singular behavior of $\hat h$ by
observing that, upon taking $\tau=\tau' \delta$ in
Proposition~\ref{prop:hexptsmall} (which is possible thanks to the
uniformity of the error), we get the following corollary for $t$ even
closer to $1$, namely at a distance of order $\delta^2$ i.e.\ $|y-1|$:
\begin{cor}
  \label{cor:hexptverysmall}
  Assume that $t-1 \sim \tau' \delta^2$, with $\tau' \in \C$. Then, for
  $\delta \to 0$ we have
  \begin{equation}
    \label{eq:hexptverysmall}
    \hat h(t,z(y)) = \frac{9}{4} + \frac34 \left((2\pi^2-15) \tau' -1 \right) \delta^2 + O\left(\delta^{7/3}\right)
  \end{equation}
  where the error is uniform for $\tau'$ in any compact subset of
  $\C$.
\end{cor}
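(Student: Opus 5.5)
The plan is to derive Corollary~\ref{cor:hexptverysmall} directly from Proposition~\ref{prop:hexptsmall}, substituting $\tau = (t-1)/\delta$. Under the hypothesis $t-1\sim\tau'\delta^2$, this gives $\tau = \tau'\delta(1+o(1))$. So $\tau\to 0$, and we are in the regime of small $\tau$, away from the excluded set $\Z_{>0}$.

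\emph{Uniformity.} First I check that the error term is uniform there. For $\tau'$ in a compact set $K\subset\C$ and $\delta$ small, all such $\tau$ lie in a fixed small closed disk around $0$. Taking $\vr=1/2$, this disk is a compact subset of $\C\setminus\Z_{>0}$. Proposition~\ref{prop:hexptsmall} therefore applies with error $O(\delta^{7/3})$, uniformly in $\tau'\in K$.

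\emph{Term-by-term evaluation.} Next I bound each term of \eqref{eq:hexptsmall}.
\begin{itemize}
\item The linear term $\frac34(2\pi^2-15)\tau\delta$ equals $\frac34(2\pi^2-15)(t-1)$. This is $\frac34(2\pi^2-15)\tau'\delta^2 + o(\delta^2)$.
\item The term $-9\tau^2\delta^2\ln\delta$ is $O(\delta^4|\ln\delta|)$.
\item The bracket multiplying $-9\delta^2$ contains three kinds of pieces. The pieces $\tau^2\psi(-\tau)$ and $\frac{2\pi^2-13}{4}\tau^2$ need care, since $\psi$ has a simple pole at $0$: $\psi(-\tau)= 1/\tau - \gamma + O(\tau)$. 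Hence $\tau^2\psi(-\tau) = \tau + O(\tau^2) = O(\delta)$, and $\frac{2\pi^2-13}{4}\tau^2 = O(\delta^2)$. The piece $-\tau/2$ is $O(\delta)$. Only the constant $\frac1{12}$ survives at order $\delta^2$, contributing $-9\cdot\frac1{12}\delta^2=-\frac34\delta^2$.
\end{itemize}
Collecting terms gives $\frac94+\frac34\bigl((2\pi^2-15)\tau'-1\bigr)\delta^2$, as in \eqref{eq:hexptverysmall}.

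\emph{Error bookkeeping (main obstacle).} The delicate point is whether the remainders are $O(\delta^{7/3})$ rather than merely $o(\delta^2)$. The pieces coming from $\tau$ inside the bracket give $O(\delta^3)$, which is fine.

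The linear term is the issue. If the hypothesis is read loosely as $t-1=\tau'\delta^2(1+o(1))$, it only yields $o(\delta^2)$. I would therefore interpret the statement as $t=1+\tau'\delta^2$ exactly, with $\tau'$ ranging over a compact set; this is how it will be used. Then the linear term is exactly $\frac34(2\pi^2-15)\tau'\delta^2$, and every remainder is $O(\delta^3|\ln\delta|)+O(\delta^{7/3})=O(\delta^{7/3})$, uniformly in $\tau'$. Under the looser reading, the same argument gives the expansion with error $o(\delta^2)$ plus $O(\delta^{7/3})$.
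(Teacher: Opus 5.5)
Your proposal is correct and takes the same route as the paper. The paper likewise obtains the corollary by substituting $\tau=\tau'\delta$ into Proposition~\ref{prop:hexptsmall}, relying on uniformity of the error for $\tau$ in a small disk around $0$. Your expansion $\tau^2\psi(-\tau)=\tau+O(\tau^2)$, which removes the digamma pole at $0$, fills in the computation the paper leaves implicit.

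Your reading of the hypothesis as the exact equality $t=1+\tau'\delta^2$ is also the one the paper intends. It uses exactly this in the change of variable $\tau'=(t-1)/\delta^2$ in the proof of Lemma~\ref{lem:critTPF_centralApprox}.
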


Plugging the above approximations into~\eqref{eq:Ktgyu} provides in
turn useful approximations for $\hat K(t,g(y),u)$. Namely, noting that
$z(1)/g(1)=(1+u/3)^2$, Proposition~\ref{prop:hexptfin} implies
immediately that, as $\delta \to 0$ with $\arg\delta \to \alpha$, we
have
\begin{equation}
  \label{eq:Kexptfin}
  \hat K(t,g(y),u) = \left(1 + \frac{u}3 \right)^2 \frac{H_\alpha(t)-1}{u+(1-u) H_\alpha(t)} + O(\delta)
\end{equation}
for any
$t \in \C \setminus \mathcal S_\alpha \setminus
H_\alpha^{-1}(\{\frac{u}{u-1}\})$. This holds in particular for any
$t$ in the open unit disk, where $H_\alpha(t)=H(t)$ is independent of
$\alpha$ and satisfies $\Re H(t) \in (\frac 14,\frac 94)$ --- as seen by
exploiting again the positivity of the coefficients in~\eqref{eq:Htexp}
to write
$|H(t)-\frac 54| \le \sum_{\ell=2}^\infty \frac{9(2\ell+3)}{(\ell+1)^2 (\ell+2)^2} =1$
 --- while we have
$\frac{u}{u-1} \notin (\frac 14,\frac 94)$ for $u \in
[0,\frac95]$. By~\eqref{eq:Kellgyu} and by the uniformity of the error
term in a neighborhood of the origin, we deduce that, for any integer
$\ell$, $K_\ell(g(y),u)$ has a finite limit for $y \to 1$, given by
the coefficient of $t^{\ell-1}$ in the expansion of the right-hand
side of~\eqref{eq:Kexptfin} around $t=0$. This justifies that we may
take, in the integral representation~\eqref{eq:Klnuy_integraldef} for
$K_\ell^{(n)}$, the \emph{same} contours as those considered in
Section~\ref{sec:asymptQ}, which pass through the saddle point $\ys=1$.

We conclude this subsection by observing that, for $t$ close to $1$,
plugging~\eqref{eq:hexptsmall} or~\eqref{eq:hexptverysmall}
into~\eqref{eq:Ktgyu} gives rise to approximations for
$\hat K(t,g(y),u)$ that differ completely between the critical
($u=9/5$) and subcritical ($u<9/5$) cases. Indeed, since
$\hat h(t,z(y))$ gets close to $9/4$, we find that
$\hat K(t,g(y),u)$ diverges in the former case, and remains finite in
the latter case. We explore these two respective situations in the following subsections.

\subsection{The critical case}
\label{ssec:critKl}

Let us discuss the critical case $u = 9/5$. From
Corollary~\ref{cor:hexptverysmall} we find that, in the regime
$t-1 \sim \tau' \delta^2$, we have
\begin{equation}
  \label{eq:Kexptverysmall}
  \hat K(t,g(y),9/5) = - \frac{16}{3(2\pi^2 - 15)} \cdot \frac1{\tau' - \frac{1}{2\pi^2-15}} \cdot \delta^{-2} + O(\delta^{-1}).
\end{equation}
Intuitively speaking, this means that $\hat K(t,g(y),9/5)$ has a pole
at $t=1+\frac{\delta^2}{2\pi^2-15}+O(\delta^3)$, which is very
analogous to the situation described by Lemma~\ref{lem:t0yexpansuper}
in the supercritical regime (in some sense this lemma remains valid
for $\xs=1$). The situation is however here more complex, as $\hat K$
has other poles near $t=1$ which are
induced\footnote{From~\eqref{eq:hexptsmall} we see heuristically that
  the equation $\hat h=\frac94$ has a solution at
  $\tau=k+\frac{12 k \delta}{2\pi^2-15}+o(\delta)$ for every
  $k \in \Z_{>0}$: because of its pole the digamma term
  $-9 \tau^2 \psi(-\tau) \delta^2$ is in fact of order $\delta$ and
  compensates precisely the other first-order term. In the variable
  $t$ such solution is at a distance of order $\delta$ from $1$, hence
  much further than the dominant solution which is at a distance of
  order $\delta^2$.} by those of $\hat h$. We will show below that
this does not affect the asymptotics of $K_\ell$ at leading order. We
may again read off informally the relevant scaling regime for $\ell$:
since the dominant pole in $t$ of $\hat K$ is at a distance $\delta^2$
from $1$, if we want the corresponding residue in~\eqref{eq:Kellgyu}
to vary, we should take $\ell$ of order
$\delta^{-2} \propto |y-1|^{-1}$, which should itself scale as
$n^{1/3}$ from the discussion of Section~\ref{sssec:critQ}. Precisely,
we have the following asymptotics.

\begin{prop}    \label{prop:K_ellnasymptcrit}
  For any $d>0$, we have as $n \to \infty$
  \begin{equation}
    K_{\lfloor d n^{1/3} \rfloor}^{(n)} \sim \frac{2^{10/3}}{9 \Gamma(1/3)} \left( \frac{432}{25} \right)^n n^{-1}
    \times 3^{1/3} \Gamma(1/3) \frac{d}{ D^2} \Airy \left(\frac{d}{D} \right)
    \label{eq:K_ellnasymptcrit}
  \end{equation}
  with $\Airy$ the Airy function, and $D := \frac{2 \pi^2 - 15}{6^{2/3}}$.
\end{prop}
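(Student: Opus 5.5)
We follow the same scheme as the proof of Proposition~\ref{prop:K_ellnasympt}, now using the Ceva trisectrix contour~\eqref{eq:cevaTheta} of Section~\ref{sssec:critQ} for the $y$-integral~\eqref{eq:Klnuy_integraldef}. We set $\ell=\lfloor d n^{1/3}\rfloor$. This contour is legitimate by the discussion at the end of Section~\ref{ssec:hsingtext}: $K_\ell(g(y),u)$ is analytic in $Y$ and has a finite limit at $y=1$.

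We split the contour into three pieces.
\begin{itemize}
\item A central part $|\theta\mp\pi/3|\le n^{\epsilon-1/3}$, which we parametrize by $\theta=\mp\pi/3\pm s n^{-1/3}$.
\item A near tail, which stays within a fixed small neighbourhood of $y=1$.
\item A far tail.
\end{itemize}
On the central arcs we have $1-y\approx e^{\pm 2i\pi/3}\cdot$const$\cdot s n^{-1/3}$. Hence $\delta=\sqrt{3(1-y)}$ has $|\delta|\asymp s^{1/2}n^{-1/6}$ and $\arg\delta\to\pm\pi/3$. This is the case $\alpha=\pm\pi/3$ of Section~\ref{ssec:hsingtext}, and $\ell\delta^2$ is of order one.

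The key estimate is the analogue of~\eqref{eq:Kellresiduesuper}. We write $K_\ell(g(y),u)=\frac1{2i\pi}\oint \hat K(t)\,t^{-\ell}dt$ and deform the $t$-contour to a circle $|t-1|=\kappa|\delta|$ around $1$, with $\kappa$ small and fixed, completed by an outer contour. We show the following.
\begin{enumerate}
\item Inside $|t-1|\le\kappa|\delta|$, Corollary~\ref{cor:hexptverysmall} and Rouché's theorem (as in Lemma~\ref{lem:t0yexpansuper}) give a unique simple pole $t_0=1+\frac{\delta^2}{2\pi^2-15}+O(\delta^{7/3})$. By~\eqref{eq:Kexptverysmall} its residue is $-\frac{16}{3(2\pi^2-15)^2}+o(1)$, so it contributes $\frac{16}{3(2\pi^2-15)^2}\,t_0^{-\ell}(1+o(1))$, with $t_0^{-\ell}\approx\exp(-\ell\delta^2/(2\pi^2-15))$.
\item Along the outer contour, $\hat K$ stays bounded by $O(\delta^{-1})$ or better. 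We use Proposition~\ref{prop:hexptsmall} on $|t-1|\asymp|\delta|$: it keeps $\hat h-\frac94$ of order $\delta$, provided we thread the contour between the secondary poles $\tau\approx k$, which occur only for $\arg\tau\approx 0$ while $\arg\delta\approx\pm\pi/3$. We use Proposition~\ref{prop:hexptfin} at larger distances, together with~\eqref{eq:Daffodil_crit} to exclude zeros of $H_\alpha-\frac94$ away from $t=1$. We then push the outer contour to $|t|$ slightly less than $1$ in the direction of $t=1$, i.e.\ into the region $\Re(\tau)<0$. There $|t^{-\ell}|$ is of order one, and the $t$-integral gives an $O(\delta^{-1}\cdot\delta)=O(1)$ times a decaying factor, which is negligible against the pole term after the $y$-integration. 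Precisely, we may need a contour bending into $|t|<1$ with $|t|^{-\ell}\le e^{O(\ell|\delta|)}$ kept controlled. This is the step I expect to be the main obstacle: uniformly bounding $\hat K$ on a $t$-contour that passes between the dense poles $x^{-k}$ of $\hat h$ and the induced poles of $\hat K$ near $\tau=k$, uniformly in $y$ on the central and near-tail arcs. I would try the contour $t=1+\tau\delta$ with $\arg(\tau\delta)$ held near $\pi$, i.e.\ going leftward, exploiting $\Re H$ bounds as in the text.
\end{enumerate}

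With the key estimate in hand, the central arc gives, as in the derivation of~\eqref{eq:integralresultcrit}, the expression
\[
\frac{1}{2i\pi}\int \frac{16}{3(2\pi^2-15)^2}e^{-\ell\delta^2/(2\pi^2-15)}\,\frac{d\log g}{ds}\,g(y)^{-n}ds .
\]
Here $\frac{d\log g}{ds}\approx\frac{9\sqrt3}{4}s^2/n$, and $g^{-n}\approx(25/432)^{-n}e^{-\frac{3\sqrt3}{4}s^3}$. Summing the two conjugate arcs produces an integral of the form $n^{-1}\int_0^\infty s^2 e^{-as^3}\,\Im(e^{-b d s e^{\pm i\pi/3}})\,ds$ type. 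By rotating the integration ray, this becomes the Airy integral representation $\Airy(x)=\frac1{2\pi i}\int e^{v^3/3-xv}dv$ along the steepest-descent rays $\arg v=\pm\pi/3$, differentiated once, which yields $\frac{d}{D^2}\Airy(d/D)$ with $D=(2\pi^2-15)/6^{2/3}$ after tracking constants.

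The tails are handled as in Proposition~\ref{prop:K_ellnasympt}. On the near tail we have $|t_0^{-\ell}|\le e^{c''\ell|\delta|^2}$ with $\ell|\delta|^2\lesssim n^{1/3}|y-1|$, which is beaten by $e^{-cn|y-1|^3}$ once $|y-1|\ge n^{\epsilon-1/3}$. On the far tail we use $K_\ell=O(\tau^{-\ell})$ with $\ell\sim n^{1/3}$ against Lemma~\ref{lem:cevagood}.
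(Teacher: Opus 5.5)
Your skeleton is the paper's: the Ceva contour in $y$, the residue at the dominant pole $t_0\approx 1+\delta^2/(2\pi^2-15)$, an Airy integral from the two conjugate central arcs, and tails handled as in Proposition~\ref{prop:K_ellnasympt}. But the key estimate, the analogue of Lemma~\ref{lem:critTPF_centralApprox}, is not established, and it fails exactly at the step you flag. The residue term is itself of order one. So whatever remains of the $t$-contour after isolating $t_0$ must carry a factor $t^{-\ell}$ that is exponentially small; an outer contribution of size $O(1)$ is \emph{not} negligible against the pole term. Bending the contour into $|t|<1$ near $t=1$ goes the wrong way. At distance $\asymp|\delta|$ to the left of $1$ you get $|t|^{-\ell}\approx e^{c\ell|\delta|}$, with $\ell|\delta|\asymp d\sqrt{s}\,n^{1/6}\to\infty$. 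So the bound $|t|^{-\ell}\le e^{O(\ell|\delta|)}$ you mention is exponentially large, not controlled, and no decaying factor is available there.

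The missing observation is that no threading between poles is needed. On the arc $\theta=-\frac\pi3+sn^{-1/3}$ one has $y-1\approx\sqrt3e^{2i\pi/3}sn^{-1/3}$, so $1-y$ has argument $-\pi/3$ and $\arg\delta\to-\pi/6$ (and $+\pi/6$ on the other arc), not $\pm\pi/3$: you swapped $y-1$ and $1-y$. Consequently the poles $x^{-k}$ of $\hat h$ and the induced poles of $\hat K$ near $1+k\delta$ all have modulus at least about $e^{\Re\delta}=e^{|\delta|\cos(\pi/6)}$, which is strictly beyond $e^{|\delta|/2}$.

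The paper therefore takes as outer contour the full circle $|t|=e^{|\delta|/2}$, plus a small circle around $1$. Your Rouch\'e circle $|t-1|=\kappa|\delta|$ with $\kappa<\frac12$ would also do; the paper uses radius $|\delta|^2$. Using Propositions~\ref{prop:hexptfin} and~\ref{prop:hexptsmall}, Corollary~\ref{cor:hexptverysmall} and \eqref{eq:Daffodil_crit}, it checks that $\hat h\neq\frac94$ in between. On the outer circle it then has $|\hat K|=O(|\delta|^{-1})$ and $|t^{-\ell}|=e^{-\ell|\delta|/2}$. This gives an error $O\bigl(\frac{n^{1/6}}{\sqrt s}e^{-cdn^{1/6}\sqrt s}\bigr)$. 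Although singular at $s=0$, it contributes only a relative $O(n^{-2/3})$ once weighted by $s^2/n$ and integrated over $s$.

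With your angle $\pm\pi/3$, the first pole would sit exactly on that circle, which probably suggested that threading was needed. In fact any radius $e^{c|\delta|}$ with $0<c<\cos\arg\delta$ works.

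Finally, fix your residue. Near $t_0$ one has $\hat K\approx-\frac{16}{3(2\pi^2-15)}(t-t_0)^{-1}$, so the pole contributes $\frac{16}{3(2\pi^2-15)}t_0^{-\ell}$. The extra factor $(2\pi^2-15)^{-1}$ in your value would make the final constant wrong by that factor.
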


Let us prove this proposition, and set
$\ell := \lfloor d n^{1/3} \rfloor$ from now on.  In the integral
representation~\eqref{eq:Klnuy_integraldef} for $K_\ell^{(n)}$, we
choose the contour in the variable $y$ to be the portion of Ceva
trisectrix $y(\theta)$ considered in Section~\ref{sssec:critQ}.
Splitting the interval for $\theta$ into the same three parts as
before (see again Figure~\ref{fig:CevaTrixArcs}), we denote their
corresponding contributions to $K_\ell^{(n)}$ by $K_{\ell;+}^{(n)}$,
$K_{\ell;-}^{(n)}$ and $K_{\ell;\mathrm{tail}}^{(n)}$ respectively.

\paragraph{Central approximation.} Let us first note that, by symmetry, we have
\begin{equation}
  K_{\ell;+}^{(n)} = (K_{\ell;-}^{(n)})^*.
\end{equation}
Thus, it suffices to study the contribution
$K_{\ell;-}^{(n)}$ of the half of central arc corresponding to
$\theta=-\frac\pi3+s n^{-1/3}$, $s \in [0,n^{\epsilon}]$ with $\epsilon=1/24$.
We have the following key estimate for $K_{\ell}(g(y),9/5)$ on this arc.

\begin{lem}
  For $y=y(-\frac{\pi}{3} + s n^{-1/3})$, $s \in [0,n^{\epsilon}]$, and $\ell=\lfloor d n^{1/3} \rfloor$, we have
  \begin{equation}
\!\!  K_{\ell}(g(y),9/5)
    = \frac{16}{3(2\pi^2 - 15)} e^{-\frac{3\sqrt{3}}{2\pi^2 - 15} e^{-i \pi/3} s d} 
    \!+ O\!\left(\sqrt{s}n^{-1/6}\right)
    \!+ O\!\left(\frac{n^{1/6}}{\sqrt{s}}e^{-c d n^{1/6} \sqrt{s}}\right)
    \label{eq:critTPF_centralApprox}
  \end{equation}
  for some $c>0$. 
  \label{lem:critTPF_centralApprox}
\end{lem}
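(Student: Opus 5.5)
We would write $K_\ell(g(y),9/5)$ through the contour integral \eqref{eq:Kellgyu} in $t$ and deform the contour so that it captures only the dominant pole of $\hat K(t,g(y),9/5)$, located at distance of order $\delta^2$ from $1$. The first step is to fix the local parametrization. On the arc $\theta=-\frac\pi3+sn^{-1/3}$, we have $y-1\sim e^{-2i\pi/3}\cdot\sqrt3\, s n^{-1/3}$ up to a computable constant, read off from \eqref{eq:cevaTheta}. Hence $\delta^2=3(1-y)$ is of order $s n^{-1/3}$, with $\arg\delta^2\to\pi/3$, and $|\delta|\asymp \sqrt{s}\,n^{-1/6}$. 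By \eqref{eq:Kexptverysmall}, the pole sits at
\[
t_0=1+\frac{\delta^2}{2\pi^2-15}+O(\delta^3).
\]
With $\ell=\lfloor dn^{1/3}\rfloor$ this gives $t_0^{-\ell}\approx\exp\bigl(-\frac{3(1-y)}{2\pi^2-15}\,dn^{1/3}\bigr)$. Substituting the expansion of $1-y$ yields exactly $\exp\bigl(-\frac{3\sqrt3}{2\pi^2-15}e^{-i\pi/3}sd\bigr)$. We must check that $\Re(e^{-i\pi/3})>0$, so that this factor is bounded, and that the $O(\delta^3)\ell$ correction is $O(\sqrt{s}\,n^{-1/6})$.

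For the residue computation, we would first establish the existence and simplicity of the pole. Corollary~\ref{cor:hexptverysmall} holds uniformly for $\tau'$ in compacts, and $\hat h$ is analytic in $t$ there. Rouché's theorem on a circle $|\tau'-\frac1{2\pi^2-15}|=\text{const}$ then gives exactly one zero of $\hat h-\frac94$ inside. The residue of $\hat K/t^\ell$ equals
\[
-\frac{z}{g}\cdot\frac{\hat h-1}{(1-u)\partial_t\hat h}\,t_0^{-\ell}.
\]
From the corollary, $\partial_t\hat h\approx\frac34(2\pi^2-15)$ (obtained via Cauchy estimates on the $\tau'$ scale), $\hat h-1\approx\frac54$, $u-1=\frac45$, and $z/g=(1+u/3)^2=\frac{64}{25}$. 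These combine to the constant $\frac{16}{3(2\pi^2-15)}$, consistent with \eqref{eq:Kexptverysmall}, and the errors contribute the $O(\sqrt{s}\,n^{-1/6})=O(\delta)$ term.

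The remaining integral runs over a contour encircling $t_0$ but staying within distance of order $\delta$ of $1$, together with a larger contour. For this we would take the circle $|t-1|=\eta|\delta|$ joined to a contour of radius slightly above $1$ away from $t=1$. The plan is to use the contour $t=1+\tau\delta$ with $\tau$ on a vertical-ish line $\Re(\tau e^{i\arg\delta})=\kappa>0$ that separates $t_0$ (at $\tau\approx\delta/(2\pi^2-15)\to0$) from the poles $\tau\approx k\in\Z_{>0}$. On it, Proposition~\ref{prop:hexptsmall} (with its mildly-large-$\tau$ uniformity) shows that $\hat h-\frac94$ is of order $\delta$ and bounded away from zero relative to $\delta$. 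Hence $\hat K=O(\delta^{-1})$, while $|t|^{-\ell}\le e^{-c\ell|\delta|}=e^{-cd n^{1/6}\sqrt s}$. Integrating over a length of order $|\delta|$ gives a bound $O(1)\cdot e^{-cdn^{1/6}\sqrt s}$. For larger $|\tau|$ we would close the contour with Proposition~\ref{prop:hexptfin}, using $H_\alpha\neq\frac94$ away from $t=1$ (a consequence of \eqref{eq:Daffodil_crit} and its continuation) to bound $\hat K$ by a constant while $|t|^{-\ell}$ decays. The stated form $\frac{n^{1/6}}{\sqrt s}e^{-cdn^{1/6}\sqrt s}$, i.e.\ $\delta^{-1}$ times the exponential, absorbs a cruder count. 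A small region of $s$ near $0$, where $\delta\lesssim n^{-1/6}$, is covered trivially because that error term is then huge.

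The main obstacle is this contour for the non-dominant part. We need to show that along the whole separating contour $\hat h(t,z(y))-\frac94$ has no zeros and is of size $\gtrsim|\delta|$ (or $|\tau|\,|\delta|$). This means controlling the digamma term of \eqref{eq:hexptsmall} between the poles $\tau\approx k$ and matching it with the $H_\alpha$ regime, uniformly in $\arg\delta$ near $\pi/6$. We would first try a line at $\Re\tau=\frac12$ in the rotated variable, where $\psi(-\tau)$ has no poles nearby, and bound the remaining terms directly.
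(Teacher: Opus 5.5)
Your proposal is correct and is essentially the paper's proof. Like you, the paper isolates the single pole of $\hat K$ at distance $\asymp|\delta|^2$ from $1$, though it integrates the approximation \eqref{eq:Kexptverysmall} over a negatively oriented circle of radius $|\delta|^2$ rather than computing the exact residue via Rouch\'e and Cauchy estimates; it then bounds the remainder on the circle $|t|=e^{|\delta|/2}$, which near $t=1$ agrees to leading order with your line $\Re(t-1)=\frac12|\delta|$, using Propositions~\ref{prop:hexptfin}, \ref{prop:hexptsmall}, Corollary~\ref{cor:hexptverysmall} and \eqref{eq:Daffodil_crit} to exclude other zeros of $\hat h-\frac94$ and to get $\hat K=O(\delta^{-1})$ there. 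Fix a sign slip: \eqref{eq:cevaTheta} gives $y-1\sim\sqrt3\,e^{2i\pi/3}s\,n^{-1/3}$, so $\arg\delta^2\to-\pi/3$ and $\arg\delta\to-\pi/6$, which is what actually produces the factor $e^{-i\pi/3}$ in the exponent; also, $\ell\cdot O(\delta^3)$ is really $O(s^{3/2}n^{-1/6})$ rather than $O(\sqrt{s}\,n^{-1/6})$, which is harmless once integrated against $e^{-\frac{3\sqrt3}{4}s^3}$.
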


\begin{proof}
  Recall the notation $\delta := \sqrt{3(1-y)}$.  We start from the
  integral representation~\eqref{eq:Kellgyu} for $K_{\ell}(g(y),9/5)$,
  and deform the contour into the union of the two following circles:
  \begin{itemize}
  \item a small negatively oriented circle $\mathcal C_1$ with center $1$ and radius $|\delta|^2$,
  \item a large positively oriented circle $\mathcal C_2$ with
    center $0$ and radius $e^{|\delta|/2}$.
  \end{itemize}
  We claim that, for $\delta$ small enough, this deformation is valid,
  namely that the region $\mathcal R$ consisting of the interior of
  $\mathcal C_2$ deprived from that of $\mathcal C_1$ contains no pole
  of $t \mapsto \hat K(t,g(y),9/5)$, i.e.\ no solution of
  $\hat h(t,z(y))=\frac94$. To this end, we use the approximations of
  the previous subsection. First, we observe that there exists $a>0$
  such that
  \begin{equation}
    \label{eq:Hineq}
    \left\lvert H_\alpha(t)-\frac94 \right\rvert \geq a |t-1|, \qquad \forall t \in \mathcal R.
  \end{equation}
  Indeed this is the case in a small fixed neighborhood of $t=1$ by
  the expansion~\eqref{eq:hphiexp}, and outside this neighborhood we
  use~\eqref{eq:Daffodil_crit} and note that this inequality extends
  to $|t|\leq e^{|\delta|/2}$, for $\delta$ small enough, by analytic
  continuation. Now, to prove that $\hat h(t,z(y)) \neq \frac94$ for
  $t \in \mathcal R$, we distinguish the following several cases.
  \begin{itemize}
  \item For $|t-1| \gg |\delta|$, we combine~\eqref{eq:Hineq} and
    Proposition~\ref{prop:hexptfin}, noting that $\arg(t-1)$ remains
    away from $\alpha$ for $t \in \mathcal R$.
  \item For $t = 1+O(\delta)$ and $|t-1| \gg |\delta|^2$, we see that
    $h(t,z(y))-9/4$ is small but non-zero, as the first order
    correction in \eqref{eq:hexptsmall} does not vanish, and as
    $\tau$ does not approach integer values for $t$ in
    $\mathcal R$.
  \item For $t = 1 + O(\delta^2)$ we again see that $h(t,z(y))-9/4$ is
    small but non-zero, using now \eqref{eq:hexptverysmall} and noting
    that $2\pi^2 - 15 > 1$ to see that the correction is non-zero
    outside $\mathcal C_1$, which ensures $|\tau'| \geq 1$.
  \end{itemize}
  This shows that the deformation of the contour is valid, and hence
  we have
  \begin{equation}
    K_\ell(g(y),u) = \frac{1}{2i\pi} \oint_{\mathcal C_1} \frac{\hat K(t,g(y),9/5)dt}{t^\ell}
     + \frac{1}{2i\pi} \oint_{\mathcal C_2} \frac{\hat K(t,g(y),9/5)dt}{t^\ell}.
   \end{equation}
   We treat the integral over $\mathcal C_1$ by doing the change of
   variable $t \to \tau'=\frac{t-1}{\delta^2}$ and
   using~\eqref{eq:Kexptverysmall}: recalling that $\mathcal C_1$ is
   negatively oriented and applying the residue theorem yields
   \begin{equation}
     \frac{1}{2i\pi} \oint_{\mathcal C_1} \frac{\hat K(t,g(y),9/5)dt}{t^\ell} =
     \frac{16}{3(2\pi^2 - 15)} \left(1+\frac{\delta^2}{2\pi^2 - 15}\right)^{-\ell} + O(\delta)
   \end{equation}
   which, using $\delta^2=3(1-y)=3\sqrt{3}e^{-i\pi/3}s\, n^{-1/3}+O(s^2n^{-2/3})$, yields the first two terms in the right-hand side
   of~\eqref{eq:critTPF_centralApprox}. As for the integral over
   $\mathcal C_2$, it yields the third (error) term, since along
   $\mathcal C_2$ we have $|t|^\ell \sim e^{-c d n^{1/6} \sqrt{s}}$
   and the modulus of $\hat K(t,g(y),9/5)$ is a
   $O(\delta^{-1})=O(n^{1/6}/\sqrt{s})$ by the approximation results
   of the previous subsection. Note that this error term potentially
   blows up for $s \to 0$, but we will see below that it yields a negligible contribution
   once integrated over $s$.
\end{proof}

Now, to estimate $K_{\ell;-}^{(n)}$, we shall multiply 
the estimate \eqref{eq:critTPF_centralApprox} for $K_{\ell}(g(y),9/5)$
by that \eqref{eq:gyncentralcrit} for $g(y)^{-n}$, and, from \eqref{eq:approxlog_g_crit},
by the factor
\begin{equation}
  \frac{g'(y)}{g(y)} \cdot \frac{dy}{ds} = \frac{d \log g(y)}{ds} = \frac{9 \sqrt{3} s^2}{4 n}\left( 1 + O(n^{\epsilon-1/3}) \right),
  \label{eq:dlog_g_crit_ceva}
\end{equation}
to yield
\begin{multline}
  K_{\ell;-}^{(n)} = \frac{1}{2 i \pi}
    \frac{16}{3 (2 \pi^2 - 15)}
    \frac{9 \sqrt{3}}{4 n} 
    \left( \frac{432}{25} \right)^{\!\! n}
    \int_{0}^{\infty} ds \, s^2 
    e^{- \frac{3\sqrt{3}}{4} s^3 - \frac{3\sqrt{3}}{2\pi^2-15} e^{- i \pi /3} s d}
    \times \\
    \Bigg( 1+ O(n^{\epsilon-1/6}) + \underbrace{O\left( 
          n^{1/6} \int_0^{n^\epsilon} ds \, s^{3/2} e^{-c d n^{1/6} \sqrt{s}}
    \right)}_{O(n^{-2/3})} \Bigg).
\end{multline}
Combining with the symmetric contribution $K_{\ell;+}^{(n)}$,
and using the integral evaluation
\begin{equation}
  \Re\!\left[\frac{1}{2 i \pi} \int_{0}^{\infty}\!\!\! ds \, s^2 e^{- \frac{3\sqrt{3}}{4} s^3 - \frac{3\sqrt{3}}{2\pi^2-15} e^{- i \pi /3} s d} \right] = \frac{2^{5/3}}{3^{11/6}(2 \pi^2 - 15)} d \Airy\left(\frac{6^{2/3}}{2 \pi^2 - 15} d \right)
\end{equation}
we find that $K_{\ell;+}^{(n)}+K_{\ell;-}^{(n)}$ is asymptotically equivalent to the right-hand side of~\eqref{eq:K_ellnasymptcrit}.

\paragraph{Tail bound.} We now complete the proof of
Proposition~\ref{prop:K_ellnasymptcrit} by showing that the
contribution $K_{\ell;\mathrm{tail}}^{(n)}$ of the tail arc is
negligible. We proceed as in the proof of
Proposition~\ref{prop:K_ellnasympt} by splitting it into \emph{near}
and \emph{far} tail regions. More precisely, we note that the
statement and proof of Lemma~\ref{lem:critTPF_centralApprox} remain
valid if we take $s \in [n^{\epsilon},a n^{1/3}]$ for some small
$a>0$. On this interval, corresponding to a half of the near tail
region, $K_{\ell}(g(y),9/5)$ remains bounded, and since we multiply
this quantity by $g(y)^{-n}=O(g(1)^{-n} e^{-c' n^{3\epsilon}})$, we
get a negligible contribution. By symmetry the same conclusion holds
for the other half of the near tail region, corresponding to
$\theta=\frac{\pi}3-s n^{-1/3}$ with $s \in [n^{\epsilon},a
n^{1/3}]$. Finally, the far tail region corresponds to
$\theta \in [-\frac\pi3+a,\frac\pi3-a]$ and we use the same reasoning
as in the proof of Proposition~\ref{prop:K_ellnasympt} to show that it
gives rise to a negligible contribution (the main modification is that
$\ell$ is of order $n^{1/3}$ instead of $n^{1/2}$, which does not
affect the conclusion). Proposition~\ref{prop:K_ellnasymptcrit} is now
established.

Equation  \eqref{eq:distanceprofile_crit} is then obtained by using
$K^{(n)}(9/5) \sim 2n Q_n$ with $Q_n$ as in \eqref{eq:Qn_asymcrit}.

\subsection{The subcritical case}
\label{ssec:subcritKl}

Let us now turn to the subcritical case $u<9/5$. As mentioned at the
end of Section~\ref{ssec:hsingtext}, $\hat K(t,g(y),u)$ now remains
finite for $y$ and $t$ close to $1$. More precisely it has,
by~\eqref{eq:Ktgyu}, an expansion for $t-1$ of order $\delta$ that is
very similar to~\eqref{eq:hexptsmall}, where the different terms get
multiplied by nonvanishing rational functions of $u$ (given explicitly
below). From this, we expect that $K_\ell(g(y),u)$ and $h_\ell(z(y))$,
and in turn $K_\ell^{(n)}$ and $h_\ell^{(n)}:=[z^n] h_\ell(z)$, will
have similar asymptotics. The relevant scaling regime is here known
from~\cite{quadwithnoME} and can be read informally
from~\eqref{eq:klFromMinbus}: as we take
$x=x(y)=e^{-\delta+O(\delta^2)}$, we should take $\ell$ of order
$\delta^{-1} \propto |y-1|^{-1/2}$, which should itself scale as
$n^{1/4}$ from the discussion of Section~\ref{sssec:noncritQ}. The
precise asymptotics are the following.

\begin{prop}
  For any $d>0$, we have as $n \to \infty$
  \begin{equation}
    \label{eq:Knsub}
    K^{(n)}_{\lfloor d n^{1/4} \rfloor} \sim \frac{4}{\sqrt{\pi}} \left( \frac{3+u}{9-5u} \right)^{5/2} \left( \frac{3(3+u)^2}4 \right)^n 
    \frac{\rho_u(d)}{n^{7/4}} 
  \end{equation}
  with $\rho_u(d)$ as in~\eqref{eq:distanceprofile_subcrit}.
\end{prop}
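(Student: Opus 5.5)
We follow the same scheme as for Propositions~\ref{prop:K_ellnasympt} and~\ref{prop:K_ellnasymptcrit}. We set $\ell=\lfloor d n^{1/4}\rfloor$ and write $K_\ell^{(n)}$ via~\eqref{eq:Klnuy_integraldef}, taking the contour in $y$ to be the circle $|y|=\ys=1$ of Section~\ref{sssec:noncritQ}; the end of Section~\ref{ssec:hsingtext} justifies this. The circle is split into a central arc $|\arg y|<n^{\epsilon-1/2}$, a near tail $n^{\epsilon-1/2}<|\arg y|<a$, and a far tail.

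The far tail is handled as in the supercritical proof. Along it, $t\mapsto\hat K(t,g(y),u)$ is holomorphic on a disk $|t|\le\tau$ with $\tau$ possibly below $1$, so $K_\ell=O(\tau^{-\ell})=e^{O(n^{1/4})}$, and this is beaten by Lemma~\ref{lem:circlegood}. On the near tail we need $K_\ell(g(y),u)$ to grow at most like $e^{o(n^{2\epsilon})}$. We will get this from a crude bound on the $t$-integral over a circle of radius $e^{-c|\delta|}$ that avoids all poles. The key point is that in the subcritical case $\hat K$ has no pole near $t=1$ other than those inherited from $\hat h$: indeed $u+(1-u)\hat h$ stays away from $0$, because $\hat h\approx 9/4$ while $\frac{u}{u-1}\ne 9/4$. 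So the bound gives $|K_\ell|\le C|\delta|^{-1}e^{c\ell|\delta|}$, which is negligible against $e^{-c'n^{2\epsilon}}$.

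The heart of the proof is the key estimate on the central arc. There $y=e^{isn^{-1/2}}$, hence $\delta=\sqrt{3(1-y)}\approx \sqrt{3s}\,e^{-i\pi/4}n^{-1/4}$ for $s>0$ (with the conjugate for $s<0$), and $\ell\delta\approx d\sqrt{3s}\,e^{\mp i\pi/4}$ is of order one. Define
\[
F(X):=\frac{(1+u/3)^2(X-1)}{u+(1-u)X},
\]
so that $\hat K=F(\hat h)$ up to the factor $z(y)/g(y)$. Expanding $F$ to second order around $\hat h=9/4$, using~\eqref{eq:hexptsmall}, the linear term in $\hat h-9/4$ carries $F'(9/4)=\frac{4(3+u)^2}{3(9-5u)^2}$. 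The quadratic term only contributes terms of order $\delta^2$ that are polynomial in $\tau$, with no poles. What remains after extracting $[t^{\ell-1}]$ is the non-polynomial part, namely the digamma term $-9\tau^2\psi(-\tau)\delta^2$. Its poles at $\tau=k$, i.e.\ $t\approx x^{-k}$, produce residues. The cleanest route, though, is to avoid the digamma expansion and write
\[
K_\ell(g(y),u)=F'(9/4)\,h_\ell(z(y))+O(\text{smaller}),
\]
using~\eqref{eq:klFromMinbus} directly. With $x=e^{-\delta+O(\delta^2)}$, $r\to3/2$ and $\ell\delta=\xi$, this gives
\[
h_\ell\approx \tfrac94\,\delta^3\,\frac{d}{d\xi}\!\left[\frac{\cosh(\xi/2)}{\sinh^{3}(\xi/2)}\right]\text{-type function}.
\]
The polynomial-in-$\ell$ part of order $\delta^2$, together with $O(\delta)$ remnants, must then be shown to integrate to a negligible contribution. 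Since $s\mapsto\delta$ is analytic apart from $\sqrt s$ factors, their contribution after the Gaussian integration is of lower order than $n^{-7/4}$; this needs care because the main term is itself of order $\delta^3\sim n^{-3/4}$. To justify "$K_\ell\approx F'(9/4)h_\ell$ plus residual", we will deform the $t$-contour as in Lemma~\ref{lem:critTPF_centralApprox}, to a circle of radius $e^{-|\delta|/2}$ together with small circles around the poles $x^{-k}$, $k\lesssim |\delta|^{-1/b}$. We then apply Proposition~\ref{prop:hexptsmall} with its mildly-large-$\tau$ uniformity, and control the sum over the poles $x^{-k}$ by the $e^{-k\ell\delta}$ decay. \emph{This residue-by-residue matching, with uniform errors, is the main obstacle.}

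Finally we assemble. We multiply the central estimate by $g(y)^{-n}\approx g(1)^{-n}e^{-cs^2}$ and by $\frac{d\log g}{ds}\approx 2cs/n$, with $c$ computed from~\eqref{eq:values_subcrit}, and integrate in $s$. Using $\delta^3\propto s^{3/2}n^{-3/4}$ yields the total order $n^{-1-3/4}=n^{-7/4}$. Expanding the hyperbolic expression via the identity
\[
\frac{\cosh(\xi/2)}{\sinh^3(\xi/2)}\;\to\;1-6\,\frac{1-\cosh\cos}{(\cosh-\cos)^2}
\]
after adding the contributions from $\pm s$, i.e.\ $\xi=d\sqrt{3s}\,e^{\pm i\pi/4}$, and rescaling $s$ to absorb $c$, should reproduce $\rho_{\mathrm{sub}}$ with the scale $D=((9-5u)/(3+u))^{1/4}$. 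The prefactor is then matched against $2nQ_n$ from~\eqref{eq:Qn_asymsub}.
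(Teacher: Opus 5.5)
Your architecture matches the paper's: the circle $|y|=1$, comparing $K_\ell(g(y),u)$ with $c(u)\,h_\ell(z(y))$ where $c(u)=F'(9/4)$, the asymptotics of $h_\ell$ from \eqref{eq:klFromMinbus}, and the uniformity of Proposition~\ref{prop:hexptsmall}. However, the step that carries all the difficulty, $K_\ell-c(u)h_\ell=o(\delta^3)$ uniformly on the central arc, is left open. The contour you sketch for it also cannot work. In the subcritical scaling $\ell|\delta|=O(1)$, so an outer circle of radius $e^{\pm|\delta|/2}$, as in Lemma~\ref{lem:critTPF_centralApprox}, has $|t|^{-\ell}=\Theta(1)$. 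It therefore only yields $K_\ell=O(1)$, far above the $\delta^3$ main term. That contour worked in the critical case only because there $\ell|\delta|\asymp n^{1/6}\sqrt{s}\to\infty$. A radius $e^{-|\delta|/2}<1$ moreover encloses none of the poles $x^{-k}$, so it is incompatible with your small circles. The outer circle must pass through $1+(T-\frac12)\delta$ with $T=\lfloor|\delta|^{-b}\rfloor\to\infty$, so that $|t|^{-\ell}\lesssim e^{-(T-\frac12)\ell\,\Re\delta}$ is negligible, with the $T-1$ circles $|t-1-k\delta|=|\delta|/2$ in between. You also cannot rely on the $s$-integration to absorb leftover $O(\delta)$ or $O(\delta^2)$ terms. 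Even an analytic one such as $a(1-y)$ contributes $-\frac{a}{2i\pi n}\oint g(y)^{-n}dy\asymp g(1)^{-n}n^{-3/2}\gg g(1)^{-n}n^{-7/4}$. So the pointwise error genuinely has to be $o(\delta^3)$.

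The missing idea, which replaces residue-by-residue matching, is to integrate the \emph{difference} $(\hat K-c(u)\hat h)\,t^{-\ell}$ over these circles. You then never need to locate the poles of $\hat K$ and $\hat h$, which sit near $\tau\in\Z_{>0}$, exactly where \eqref{eq:hexptsmall} gives no control. Take $c(u)=F'(9/4)=\frac{16}{9}\bigl(\frac{3+u}{9-5u}\bigr)^2$; your value is off by a factor $\frac43$. With this choice the $\tau^2\delta^2\ln\delta$ and $\tau^2\psi(-\tau)\delta^2$ terms cancel, and, as you observed, the quadratic term of $F$ is polynomial in $\tau$. So on each small circle, $\hat K-c(u)\hat h$ is a polynomial in $\tau$ plus $O(\delta^{(7-5b)/3})$, cf.\ \eqref{eq:hatK_hath_expension}. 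The polynomial integrates to zero by Cauchy's theorem, and $|t^{-\ell}|=O(1)$ there. Each circle therefore contributes $O(\delta^{(10-5b)/3})$, and all $T$ of them $O(\delta^{(10-8b)/3})$. This is $o(\delta^3)$ as soon as $b<\frac18$; the paper takes $b=\frac1{16}$, giving $O(\delta^{19/6})$. Hence the number of circles must be of order $|\delta|^{-b}$ (as for $T=\delta^{-b}$ in the appendix), not $|\delta|^{-1/b}$, and no $e^{-k\ell\delta}$ decay is needed. Finally, $h_\ell=\frac94\delta^3\cosh(L/2)/\sinh^3(L/2)+O(\delta^4)$ with $L=\ell\delta$; this is the function itself, not a derivative of it. With the correct $c(u)$ it gives the constant $4\bigl(\frac{3+u}{9-5u}\bigr)^2$ of Lemma~\ref{lem:subcritTPF}. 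Your tail bounds and final assembly then go through as in the paper.
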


Let us prove this proposition, and set
$\ell := \lfloor d n^{1/4} \rfloor$ from now on.  In the integral
representation~\eqref{eq:Klnuy_integraldef} for $K_\ell^{(n)}$, we
choose the contour in the variable $y$ to be the circle of radius
$\ys=1$ as in Section~\ref{sssec:noncritQ}.  Splitting the circle in two
parts as before (see again Figure~\ref{fig:circleArcs}), we denote
their corresponding contributions to $K_\ell^{(n)}$ by
$K_{\ell;\mathrm{central}}^{(n)}$ and $K_{\ell;\mathrm{tail}}^{(n)}$
respectively.

Recall that the central arc
consists of the points whose argument belongs to
$[-n^\epsilon,n^\epsilon]$, with $\epsilon > 0$ to be fixed later. On this arc we have
the following key estimate for $K_\ell(g(y),u)$:

\begin{lem}
  For $y = e^{i s n^{-1/2}}$, $s \in [-n^\epsilon,n^\epsilon]$, and
  \label{lem:subcritTPF}
  $\ell=\lfloor d n^{1/4} \rfloor$, we have
  \begin{equation}
    K_\ell(g(y),u) = 4 \left( \frac{3+u}{9-5u} \right)^{2} 
    \frac{\cosh \!\left( \frac{d}{2} \sqrt{-3 i s} \right)}{\sinh^3 \!\left( \frac{d}{2} \sqrt{-3 i s} \right)}
    \left( \frac{-3 i s}{\sqrt{n}} \right)^{3/2} + O\left( n^{\frac{19}{12}(\epsilon - \frac{1}{2})} \right).
    \label{eq:Kellofy_approxn_subcrit}
  \end{equation}
\end{lem}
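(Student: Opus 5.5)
We start from the contour representation \eqref{eq:Kellgyu}, $K_\ell(g(y),u)=\frac1{2i\pi}\oint \hat K(t,g(y),u)\,t^{-\ell}dt$, with $\delta=\sqrt{3(1-y)}$. On the central arc $y=e^{isn^{-1/2}}$ we have $1-y=-isn^{-1/2}+O(s^2n^{-1})$, so $\delta=\sqrt{-3is}\,n^{-1/4}(1+O(sn^{-1/2}))$, $|\delta|\lesssim n^{\epsilon/2-1/4}$, and $\arg\delta\to\pm\pi/4$ according to the sign of $s$. With $\ell=\lfloor dn^{1/4}\rfloor$, the product $\ell\delta$ is close to $d\sqrt{-3is}$, which is the combination in \eqref{eq:Kellofy_approxn_subcrit}. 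This suggests working at scale $t=1+\tau\delta$, i.e.\ Proposition~\ref{prop:hexptsmall}.

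The first step is to linearize $\hat K$ in $\hat h$ near $9/4$. For $u<9/5$, the map $h\mapsto \frac{h-1}{u+(1-u)h}$ is analytic near $h=9/4$, because $u+(1-u)\tfrac94=\tfrac{9-5u}{4}\neq0$. Its derivative there is $\frac{16}{(9-5u)^2}$. Since $z(y)/g(y)\to(1+u/3)^2=\frac{(3+u)^2}{9}$, we get
\[
\hat K(t,g(y),u)=\text{(analytic in $t$)}+\frac{16(3+u)^2}{9(9-5u)^2}\,\bigl(\hat h(t,z(y))-\text{regular part}\bigr)+\cdots .
\]
The $\ell$-th coefficient of $\hat K$ therefore behaves, up to the factor $\frac{16(3+u)^2}{9(9-5u)^2}$ plus quadratic corrections, like $h_\ell(z(y))$. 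For the latter we have the exact formula \eqref{eq:klFromMinbus}. Rather than expanding $\hat K$ pole by pole, I would write
\[
\hat K=F(\hat h),\qquad F(h)=F(\tfrac94)+F'(\tfrac94)(h-\tfrac94)+R(h),\quad R(h)=O\bigl((h-\tfrac94)^2\bigr).
\]
The linear part then contributes exactly $F'(\tfrac94)\,\tfrac{z}{g}\,h_\ell(z(y))$ for $\ell\ge2$. Using \eqref{eq:klFromMinbus} with $r=3/(3-y)$ and $x(y)=e^{-\delta+O(\delta^2)}$, we get for $\ell\delta$ of order one
\[
h_\ell\approx \tfrac{9}{4}\cdot 4\,\delta^3\,\frac{\cosh(\ell\delta/2)}{\sinh^3(\ell\delta/2)}\cdot\tfrac{1}{4}\cdot(\text{constant}).
\]
Combining constants and substituting $\delta^3=(-3is/\sqrt n)^{3/2}$ should yield the main term $4\bigl(\frac{3+u}{9-5u}\bigr)^2\frac{\cosh}{\sinh^3}(\cdots)$. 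I would verify this constant matching as a routine check. The expansion of $h_\ell$ uses $x^\ell=e^{-\ell\delta}(1+O(\ell\delta^2))$ and relative errors $O(\delta)$, i.e.\ $O(n^{\epsilon/2-1/4})$. Multiplied by the size $\delta^3\sim n^{3(\epsilon-1/2)/2}$ of the main term, these stay well within the claimed error.

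The main obstacle is the nonlinear remainder $[t^{\ell-1}]R(\hat h(t,z(y)))$, which I must show is $O(n^{\frac{19}{12}(\epsilon-\frac12)})$, i.e.\ smaller than $\delta^3$ by a factor about $\delta^{1/6}$. My plan is as follows. Deform the $t$-contour to a circle of radius about $e^{\eta|\delta|}$ (a fixed small multiple), passing between the poles $x^{-k}$ of $\hat h$, and indented near $t=1$ at distance of order $|\delta|$ from the spiral $\mathcal S_\alpha$. First, I must check that $F(\hat h)$ has no poles there, i.e.\ $\hat h\neq\frac{u}{u-1}$. This follows from Propositions~\ref{prop:hexptfin}, \ref{prop:hexptsmall} and the bound $\Re H\in(\frac14,\frac94)$, since $\frac u{u-1}\notin[\frac14,\frac94]$. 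Second, I split the contour into the part $|t-1|\le|\delta|^{1-\kappa}$ and the rest. On the far part, $R(\hat h)$ is analytic across the radius-one region and $t^{-\ell}$ gives the factor $e^{-\eta\ell|\delta|}$. On the near part, $h-\frac94=O(|t-1|)$, so $R=O(|t-1|^2)$. Integrating this over a piece of length $|\delta|^{1-\kappa}$ gives $O(|\delta|^{3(1-\kappa)})$. This is too crude by itself, so I would refine it in one of two ways. One option is to subtract the explicit quadratic term $\frac12F''(\tfrac94)(\tfrac34(2\pi^2-15)(t-1))^2$, which is a polynomial in $t$ with zero high coefficients. The other is to use the uniform expansion with $b<1/5$, which is presumably where the exponent $\frac{19}{12}$ arises: the optimization $\delta^{(7-5b)/3}$ combined with $\ell$-dependence gives $\delta^{3+1/6}$. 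Tuning $\kappa$ and $b$ to balance these errors is the delicate bookkeeping step I expect to take most of the effort.
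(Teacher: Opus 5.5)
Your reduction matches the paper's. Up to an additive constant, $R(\hat h)$ is exactly $\hat K-c(u)\,\hat h$ with $c(u)=\frac{16}{9}\left(\frac{3+u}{9-5u}\right)^2$, the linear part produces $c(u)\,h_\ell(z(y))$, and your constant check $h_\ell\simeq\frac94\,\delta^3\cosh(L/2)/\sinh^3(L/2)$ is correct.

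The gap is in bounding $[t^{\ell-1}]R(\hat h)$. On your circle $|t|=e^{\eta|\delta|}$, the factor $|t^{-\ell}|=e^{-\eta\ell|\delta|}$ gives no decay, because $\ell|\delta|\approx d\sqrt{3|s|}$ is of order one on the relevant range (and tends to $0$ with $s$). On the far arc, $t$ is at distance of order one from $1$ and $R(\hat h)$ is of order one. So your estimate there is $O(1)$, not $o(\delta^3)$. Replacing $\hat h$ by $H_\alpha$ to exploit cancellations does not help, since Proposition~\ref{prop:hexptfin} only controls that difference up to $O(\delta)$. To get a genuinely small factor from $t^{-\ell}$, the bulk of the contour must sit at modulus about $|1+T\delta|$ with $T\to\infty$. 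Getting there forces you to account for the poles of $\hat h$ at $x^{-k}\approx 1+k\delta$, $k<T$, and the poles of $\hat K$ next to them. Your plan has no mechanism for this.

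That mechanism is the heart of the paper's proof. Take $T=\lfloor|\delta|^{-b}\rfloor$ and a large circle through $1+(T-\frac12)\delta$, on which $|t|^{-\ell}\approx e^{-(T-\frac12)\Re(\delta)\,\ell}$ is small. Add $T-1$ negatively oriented small circles $|\tau-k|=\frac12$ (with $t=1+\tau\delta$), each enclosing one pole of $\hat h$ and one of $\hat K$.

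On these circles, Proposition~\ref{prop:hexptsmall} with $b<\frac15$ shows that $\hat K-c(u)\hat h$ equals a polynomial in $\tau$ (a constant plus a multiple of $\tau^2\delta^2$) up to $O(\delta^{(7-5b)/3})$. The singular terms $\tau^2\psi(-\tau)\delta^2$ and $\tau^2\delta^2\ln\delta$ enter only linearly, so subtracting $c(u)\hat h$ removes them exactly. This is also why the residues of the two poles in each circle cancel at leading order.

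The polynomial part integrates to zero on each closed circle, so each circle contributes $O(\delta^{(10-5b)/3})$. The total is $O(\delta^{(10-8b)/3})$, which is $O(\delta^{19/6})$ for $b=\frac1{16}$. With $|\delta|\lesssim n^{\epsilon/2-1/4}$ this gives $O(n^{\frac{19}{12}(\epsilon-\frac12)})$.

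So the exponent comes from summing over the $T$ crossed poles, not from balancing against $\ell$. Your idea of subtracting the quadratic term is the right local ingredient, but it has to be applied on these closed circles around the poles.
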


\begin{proof}
  From its explicit expression \eqref{eq:klFromMinbus},
  with $x$ and $r$ given by \eqref{eq:xyexplicit} and \eqref{eq:ryexp},
  we have for small $\delta$,
  for $y = 1 - \frac{\delta^2}{3}$ and $\ell := \lfloor \frac{L}{\delta} \rfloor$:
  \begin{equation}
    h_{\ell}(z(y)) = \frac{9}{4} \cdot
    \frac{\cosh \!\left( \frac{L}{2} \right)}{\sinh^3 \!\left( \frac{L}{2} \right)}
  \delta^{3} + O\left( \delta^{4} \right).
    \label{eq:hlsubcrit}
  \end{equation}
  
  Let us show that $K_{\ell}(g(y),u)$ is at leading order
  proportional to $h_{\ell}(z(y))$, more precisely:
  \begin{equation}
    K_{\ell}(g(y),u) - c(u) \cdot h_{\ell}(z(y)) = O\left(\delta^{19/6}\right), \qquad c(u) := \frac{16}{9} \left( \frac{3+u}{9-5u} \right)^{2}.
    \label{eq:Kl_hl_prop_relation}
  \end{equation}
  To this end, we write the difference as a contour integral
  \begin{equation}
    K_{\ell}(g(y),u) - c(u) \cdot h_{\ell}(z(y)) = \frac{1}{2i \pi} \oint \frac{\hat K(t,g(y),u) - c(u) \cdot \hat h(t, z(y))}{t^{\ell}} dt.
    \label{eq:hatK_hath_contour_integral}
  \end{equation}
  where the contour of integration, initially a small circle around the origin,
  is deformed into the union of the following contours:
  \begin{itemize}
    \item a collection of $T-1$ small negatively oriented circles
      where $T:=\lfloor \delta^{-b} \rfloor$
      and $b$ a small positive number to be fixed later,
      all with the same radius $\left|\frac{\delta}{2}\right|$,
      and with centers $1 + k \delta$ for $k = 1, \ldots, T-1$.
    \item a large positively oriented circle with center $0$
      and passing through $1+\left( T-\frac{1}{2} \right)\delta$.
  \end{itemize}
  The proof that this deformation is valid works similarly
  to the case of Lemma~\ref{lem:critTPF_centralApprox}:
  we check, using results from Section~\ref{ssec:hsingtext},
  that the poles of the integrand\footnote{except, of course,
  the pole at $t=0$ which we are analyzing.}
  stay outside of that contour.
  Indeed, the poles of $\hat K(t,g(y),u)$
  and of $\hat h(t,z(y))$ both stay contained in the small circles,
  one of each within each circle.

  We now bound the contribution of each circle in the following way.
  For each small circle, we perform a change of variable $t = 1 + \tau \delta$.
  By using \eqref{eq:Ktgyu} and Proposition~\ref{prop:hexptsmall} (assuming $b<1/5$),
  we get
  \begin{equation}
    \hat K(t,g(y),u) - c(u) \cdot \hat h(t, z(y)) = c(u) \left( 
    \frac{9-25 u}{16} + \frac{9(u-1)(2 \pi^2 - 15)^2}{4(9-5u)}\tau^2 \delta^2 + O(\delta^{\frac{7-5b}{3}}) \right).
    \label{eq:hatK_hath_expension}
  \end{equation}
  Multiplying by $t^{-\ell} = (1+\tau\delta)^{-\ell} = O(1)$,
  we see that the non-error terms in the above expansion are regular
  and yield a zero contribution to the contour integral,
  while the error term yields an error $O(\delta^{\frac{10-5b}{3}})$ per small circle,
  hence a total error $O(T\delta^{\frac{10-5b}{3}}) = O(\delta^{\frac{10-8b}{3}})$.
  To get an error term smaller than the leading order $\delta^3$ in \eqref{eq:hlsubcrit},
  we fix $b = 1/16 < 1/8$. 
  As for the large circle, we note that the choice of its radius
  ensures we stay away from the poles of the integrand: as in the proof of Lemma~\ref{lem:critTPF_centralApprox},
  we use Proposition~\ref{prop:hexptfin} for $t$ away from $1$,
  and Proposition~\ref{prop:hexptsmall}  for $t$ close to $1$. 
  The integral over the large circle is then bounded by a constant times
  \begin{equation}
    \frac{1}{\left| 1+(T-1/2)\delta \right|^{\ell}} \sim
  \frac{1}{\left| e^{(T-1/2)\delta\ell} \right|} \sim
  e^{-(T-1/2) \Re(\delta)\ell} = O\left(e^{-a |\delta|^{-1/16}}\right)
    \label{eq:todo}
  \end{equation}
  for some $a > 0$.
  Altogether, we get the estimate \eqref{eq:Kl_hl_prop_relation}.
  Combining with \eqref{eq:hlsubcrit} and setting $\delta = \sqrt{3(1-y)} = \sqrt{\frac{-3is}{\sqrt{n}}}\left(1 + O(\frac{s}{\sqrt{n}})\right)$, we get the wanted result.
\end{proof}

Using \eqref{eq:gyncentral} with $g(\ys)$ given by \eqref{eq:gysaddle}, and \eqref{eq:gy_derivlog_trick},
we get from \eqref{eq:Klnuy_integraldef}:
\begin{equation}
  \begin{split}
     K_{\ell;\mathrm{central}}^{(n)} = \left( \frac{3(3+u)^{2}}{4} \right)^{n} & \frac{1}{2 i \, \pi}
    \int_{-\infty}^{+\infty} ds \frac{2 c s}{n} e^{- c s^2}
    \left( \frac{-3 i s}{\sqrt{n}} \right)^{3/2} \\
    &\times \left\{ \left( \frac{3+u}{9-5u} \right)^{2} 4
    \frac{\cosh(\frac{d}{2} \sqrt{-3 i s})}{\sinh^3(\frac{d}{2} \sqrt{-3 i s})}
    + O( n^{\frac{19}{12}(\epsilon - \frac{1}{2})}) \right\}
  \end{split}
\end{equation}
with $c=-\frac{1^2 g''(1)}{2g(1)}=\frac{9-5u}{4(3+u)}$, that is,
taking $\epsilon < 1/2$:
\begin{equation}
  \begin{split}
    K_{\ell;\mathrm{central}}^{(n)} &\sim \left( \frac{3(3+u)^{2}}{4} \right)^{n} \frac{1}{2 i \, \pi}
    \int_{-\infty}^{+\infty}\hspace{-1em}  ds \frac{2 c s}{(4c)^2 n} e^{- c s^2} 
    \left( \frac{-3 i s}{\sqrt{n}} \right)^{\!3/2} \hspace{-.5em} 4
    \frac{\cosh(\frac{d}{2} \sqrt{-3 i s})}{\sinh^3(\frac{d}{2} \sqrt{-3 i s})} \\
    &= \frac{4}{\sqrt{\pi}} \left( \frac{3+u}{9-5u} \right)^{5/2} \left( \frac{3(3+u)^2}4 \right)^n 
    \frac{1}{n^{7/4}} \tilde\rho_u(d)
  \end{split}
  \label{eq:subcritKcentral_approx}
\end{equation}
with 
\begin{equation}
    \tilde\rho_u(d) :=  
    \frac{(4 c)^{5/2}}{8 i \sqrt{\pi}}
    \int_{-\infty}^{+\infty} ds \frac{2 s}{c} e^{- c s^2}
    \frac{1}{4}
    \frac{\cosh(\frac{d}{2} \sqrt{-3 i s})}{\sinh^3(\frac{d}{2} \sqrt{-3 i s})}
    \left( -3 i s \right)^{3/2} 
    =  \Phi_u'(d) 
\end{equation}
where
\begin{equation}
  \Phi_u(d) := \frac{2 c^{3/2}}{\sqrt{\pi}}
    \int_{-\infty}^{+\infty} s^2 e^{- c s^2}
    \left( 1 + \frac{3}{\sinh^2(\frac{d}{2} \sqrt{-3 i s})} \right)ds.
\end{equation}
Here the term $1$ in the integrand was chosen so that
$\Phi_u(d) \overset{d\to 0}{\longrightarrow} 0$.
We then have $\Phi_u(d) \overset{d\to \infty}{\longrightarrow} 1$,
which ensures that $\tilde\rho_u$ is indeed a well-normalized probability density.

\bigskip

Using \eqref{eq:subcritKcentral_approx}, arguing as in the previous subsection that the tail arc
yields a negligible contribution, and using as before 
$K^{(n)} \sim 2n Q_n$ with $Q_n$ as in \eqref{eq:Qn_asymsub},
we see that
\begin{equation}
  \rho_u(d) = \lim_{n\to \infty} n^{1/4} \frac{K_{\lfloor d n^{1/4} \rfloor}^{(n)} (u)}{K^{(n)} (u)} = \tilde\rho_u(d)
\end{equation}
which identifies $\tilde\rho_u(d)$ as the distance profile.
The dependency in $u$ is best displayed by writing
\begin{equation}
  \rho_u(d) = \frac{1}{D} \  \rho_{\mathrm{sub}}\!\left( \frac{d}{D} \right),
  \qquad D := (4c)^{1/4} = \left( \frac{9-5u}{3+u} \right)^{1/4}
  \label{eq:rho_scaling_subcrit}
\end{equation}
where $\rho_{\mathrm{sub}}(x) := \Phi'(x)$ with
\begin{equation}
  \begin{split}
    \Phi(x) &:= \frac{2}{\sqrt{\pi}}
    \int_{-\infty}^{+\infty} s^2 e^{- s^2}
    \left( 1 + \frac{3}{\sinh^2\left(\frac{x \sqrt{2}}{2} \sqrt{-3 i s} \right)} \right)ds \\
    &= \frac{4}{\sqrt{\pi}}
    \int_{0}^{+\infty} s^2 e^{- s^2}
    \left( 1 - 6 \frac{1 - \cosh\left( x\sqrt{3 s} \right) \cos\left( x\sqrt{3 s} \right)}{\left( \cosh\left( x\sqrt{3 s} \right) - \cos\left( x\sqrt{3 s} \right) \right)^2} \right)ds
  \end{split}
\end{equation}
or equivalently, integrating by parts
\begin{equation}
  \rho_{\mathrm{sub}}(x) = \frac{8}{x \sqrt{\pi}}
  \int_{0}^{+\infty} s^2 (2 s^2 - 3) e^{- s^2}
  \left( 1 - 6 \frac{1 - \cosh\left( x\sqrt{3 s} \right) \cos\left( x\sqrt{3 s} \right)}{\left( \cosh\left( x\sqrt{3 s} \right) - \cos\left( x\sqrt{3 s} \right) \right)^2} \right)ds.
\end{equation}
This is the result announced in \eqref{eq:distanceprofile_subcrit}.

\subsection{Block two-point function in the critical case}
\label{ssec:rootblockcrit}

In this section, we compute the \emph{block distance profile} $\rho_{\mathrm{block}}(d)$
at the critical point $u = 9/5$.
It is defined in terms of 
the block two-point function $L_{\ell}(g,u)$ introduced in \eqref{eq:substLl} as follows.
Introducing the quantities $L_{\ell}^{(n)} := [g^n]L_{\ell}(g,9/5)$ and $L^{(n)} := \sum_{\ell \geq 1} L_{\ell}^{(n)} = [g^n]\hat L(1,g,9/5)$, 
we define
\begin{equation}
  \rho_{\mathrm{block}}(d) := \lim_{n \to \infty} \frac{n^{1/6} L_{\lfloor d n^{1/6}\rfloor}^{(n)}}{L^{(n)}}.
  \label{eq:rhoblock_def}
\end{equation}
\begin{prop}
  We have:
  \begin{equation}
    \rho_{\mathrm{block}}(d) = 
    \frac{2^{13/3}\ 3}{\Gamma(7/3)} \int_0^{\infty} d\mu \ \mu^{7/2} e^{-\mu^{3}}
    \left\{ \breve s \frac{\tilde c(\tilde c+\breve c) -2}{(\tilde c-\breve{c})^3} \right\}
    \label{eq:rhoblock_d_real}
  \end{equation}
  (see Figure~\ref{fig:distance_profile_Ll}), with the shorthand notations
  \begin{equation}
    \tilde c = \cos\left(\frac{\sqrt{3\mu}}{2^{2/3}}d\right), \quad \tilde s = \sin\left(\frac{\sqrt{3\mu}}{2^{2/3}}d\right),
    \quad \breve c = \cosh\left(\frac{3\sqrt{\mu}}{2^{2/3}}d\right), \quad \breve s = \sinh\left(\frac{3\sqrt{\mu}}{2^{2/3}}d\right).
  \end{equation}
  \label{prop:L_ellnasymptcrit}
\end{prop}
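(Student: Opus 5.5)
The approach is to mirror Section~\ref{ssec:critKl} and Section~\ref{ssec:subcritKl}: write $L_\ell^{(n)}$ as a double contour integral, first in $t$ and then in $y$, now using Theorem~\ref{thm:hatLexpr}. Since $L_\ell(g(y),9/5)=\Ks_\ell(z(y))$ does not involve $u$ except through the parametrization $g(y)$, the $t$-integral is the simple-quadrangulation one. By \eqref{eq:hatKs_to_hath_u1} and \eqref{eq:recursionk_g} at $u=1$, $\hat L = 1-1/\hat h$, and this has no pole near $t=1$ coming from $\hat h = 9/4$. The poles come only from the zeros of $\hat h$, which stay far from $t=1$, and from the poles of $\hat h$ at $t=x(y)^{-k}$.

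\textbf{Step 1: the key estimate on the central arc.} In
\begin{equation*}
  L_\ell^{(n)}=\frac1{2i\pi}\oint L_\ell(g(y),9/5)\,\frac{g'(y)\,dy}{g(y)^{n+1}},
\end{equation*}
I take the Ceva trisectrix contour of Section~\ref{sssec:critQ}, with $y=y(-\pi/3+sn^{-1/3})$, so that $\delta=\sqrt{3(1-y)}\asymp n^{-1/6}\sqrt{s}$. This explains the scaling $\ell\sim n^{1/6}$.

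By the same argument as in Lemma~\ref{lem:subcritTPF}, I would show that $\Ks_\ell(z(y))$ is, up to a constant, equal to $h_\ell(z(y))$ at leading order. The poles of $1-1/\hat h$ and of $c\,\hat h$ (with a suitable constant $c$) near $t=1$ sit at $t\approx 1+k\delta$. Proposition~\ref{prop:hexptsmall} then shows that the difference is regular, up to small errors, inside the small circles. Expanding $1/\hat h$ around $9/4$ gives $1-1/\hat h\approx \frac{5}{9}+\frac{16}{81}(\hat h-\frac94)$, so $c=16/81$. Then \eqref{eq:hlsubcrit} gives
\begin{equation*}
  L_\ell(g(y),9/5)\approx \frac{16}{81}\cdot\frac94\,\frac{\cosh(L/2)}{\sinh^3(L/2)}\,\delta^3, \qquad L=\ell\delta .
\end{equation*}
The error is $O(\delta^{19/6})$, as in \eqref{eq:Kl_hl_prop_relation}.

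\textbf{Step 2: integrate over $s$.} I multiply this estimate by $g(y)^{-n}$ from \eqref{eq:gyncentralcrit} and by the logarithmic-derivative factor \eqref{eq:dlog_g_crit_ceva}, then add the complex-conjugate arc. Setting $\delta=(3\sqrt3\,e^{-i\pi/3}s)^{1/2}n^{-1/6}$ and $\ell=dn^{1/6}$ yields
\begin{equation*}
  L_\ell^{(n)}\asymp \left(\frac{432}{25}\right)^{n} n^{-3/2}\,\Re\!\int_0^\infty s^2\, e^{-\frac{3\sqrt3}{4}s^3}\,\delta^3\,\frac{\cosh(L/2)}{\sinh^3(L/2)}\,ds .
\end{equation*}
Next I would substitute $\mu\propto s$, so that $e^{-\frac{3\sqrt3}{4}s^3}=e^{-\mu^3}$. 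Writing $\sqrt{e^{-i\pi/3}}=e^{-i\pi/6}$, one gets $L/2=\frac{\sqrt{\mu}\,d}{2^{2/3}}\bigl(\frac{3}{2}-i\frac{\sqrt3}{2}\bigr)$. Taking the real part of $e^{-i\pi/2}\cosh(L/2)/\sinh^3(L/2)$ and using product-to-sum formulas should produce the combination $\breve s\,(\tilde c(\tilde c+\breve c)-2)/(\tilde c-\breve c)^3$. This follows from
\begin{equation*}
  |\sinh(a+ib)|^2=\tfrac12(\cosh 2a-\cos 2b).
\end{equation*}
I expect the factor $\mu^{7/2}$ to come from $s^2\,ds\cdot s^{3/2}$.

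\textbf{Step 3: normalization.} The denominator $L^{(n)}=[g^n]\hat L(1,g,9/5)$ equals $[g^n]\bigl(1-(3-y)^2/9\bigr)$ in the $y$ variable, by \eqref{eq:hath_t_eq_1} and \eqref{eq:ryexp}. A Ceva-contour computation like that for $Q_n$ gives $L^{(n)}\sim \mathrm{const}\cdot(432/25)^n n^{-4/3}$, the constant involving $\Gamma$-values that become $\Gamma(7/3)$. The ratio then scales as $n^{-1/6}$, consistent with \eqref{eq:rhoblock_def}.

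\textbf{Step 4: tail bounds.} These follow exactly as in Proposition~\ref{prop:K_ellnasymptcrit}.

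The main obstacle is Step 1. The residue analysis must hold uniformly along the Ceva arc, where $\arg\delta\to\mp\pi/6$, and one must control the blow-up of the error as $s\to0$, where $\delta\to0$ and $\ell\delta$ is small. I would handle this as in Lemma~\ref{lem:critTPF_centralApprox}: after multiplying by $s^2$, check that the error integrated over $s\in[0,n^\epsilon]$ is $o(n^{-1/2})$ relative to the main term. Separately, checking the explicit constant $2^{13/3}\cdot3/\Gamma(7/3)$ is a routine but error-prone computation.
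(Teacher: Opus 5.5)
Your proposal is correct and follows the paper's proof step for step: the Ceva-trisectrix contour, the estimate $L_\ell(g(y),9/5)-\frac{16}{81}h_\ell(z(y))=O(\delta^{19/6})$ obtained by rerunning the small-circle argument of Lemma~\ref{lem:subcritTPF} with $\frac{16}{81}=\hat h^{-2}|_{\hat h=9/4}$, the normalization via $\hat L(1,g(y),9/5)=y(6-y)/9$ giving $L^{(n)}\asymp(432/25)^n n^{-4/3}$, and the substitution $\mu=\sqrt3\,s/2^{2/3}$. Fix two slips when writing it out: first, the zeros of $\hat h$ do not stay far from $t=1$, since they sit next to the poles $x(y)^{-k}$ inside the small circles around $1+k\delta$, which is what your Step~1 actually uses; second, $\frac{1}{2i\pi}\delta^3$ carries the phase $e^{-i\pi/2}/i=-1$, so the quantity whose real part you need is $\cosh(L/2)/\sinh^3(L/2)$ itself, whereas $\Re\bigl[e^{-i\pi/2}\cosh(L/2)/\sinh^3(L/2)\bigr]$ is its imaginary part and would produce $\tilde s\,(\breve c(\breve c+\tilde c)-2)$ instead of $\breve s\,(\tilde c(\tilde c+\breve c)-2)$.
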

\begin{figure}[h]
  \centering
  \includegraphics[width=.55\textwidth]{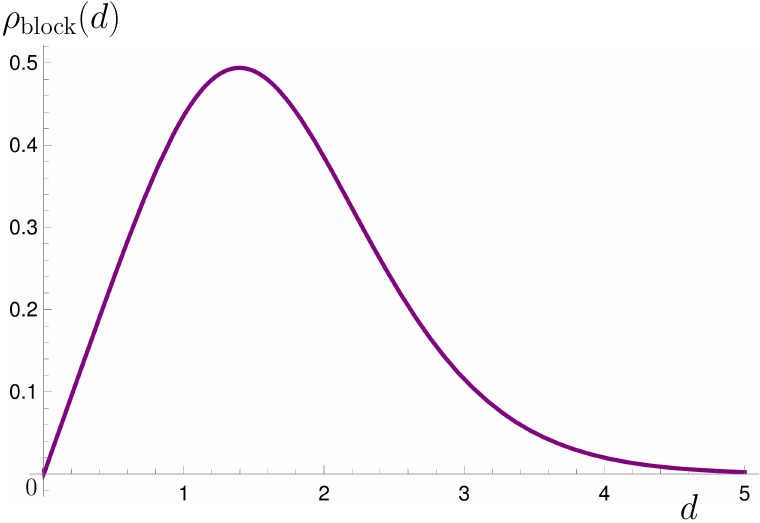}
  \caption{The block distance profile $\rho_{\mathrm{block}}(d)$ at the critical value $u=9/5$.}
  \label{fig:distance_profile_Ll}
\end{figure}

We use the following analog of \eqref{eq:Klnuy_integraldef}:
\begin{equation}
  L_\ell^{(n)} = \frac{1}{2i\pi} \oint \frac{L_\ell(g(y),9/5)g'(y)dy}{g(y)^{n+1}}
  \label{eq:Llnuy_integraldef}
\end{equation}
which we again compute using the Ceva trisectrix \eqref{eq:cevaTheta} as integration contour.
We adopt the same decomposition into central and tail arcs by writing
$L_\ell^{(n)} = L_{\ell;+}^{(n)} + L_{\ell;-}^{(n)} + L_{\ell;\mathrm{tail}}^{(n)}$.

By repeating the analysis of the previous section that led to \eqref{eq:Kl_hl_prop_relation}, 
but now with $L_{\ell}(g,u)$ instead of $K_{\ell}(g,u)$,
we obtain, now by use of \eqref{eq:substhatL},
that for small $\delta$, $y = 1 - \frac{\delta^2}{3}$, $\ell := \lfloor \frac{L}{\delta} \rfloor$, and $t = 1+ \tau \delta$, we have:
\begin{equation}
\hat L(t,g(y),9/5) - \frac{16}{81} \cdot \hat h(t, z(y)) = \frac{1}{9} - \frac{4(2 \pi^2 - 15)^2}{81} \tau^2 \delta^2 + O(\delta^{\frac{7-5b}{3}}).
  \label{eq:hatL_hath_expension}
\end{equation}
Mimicking the proof of Lemma~\ref{lem:subcritTPF} and taking $b=1/16$, this leads to
\begin{equation}
  L_{\ell}(g(y),u) - \frac{16}{81} h_{\ell}(z(y)) = O\left( \delta^{19/6} \right).
  \label{eq:Ll}
\end{equation}
The contribution of $L_{\ell;-}^{(n)}$ is obtained by setting $\theta = - \frac{\pi}{3}+ s \, n^{-1/3}$ with $s \in [0, n^\epsilon]$.
We have in particular
\begin{equation}
  y = y(\theta) = 1 + \sqrt{3} e^{\frac{2 i \pi}{3}} \frac{s}{n^{1/3}} + O(n^{2\epsilon-2/3}),
  \label{eq:y_ceva_central_arc}
\end{equation}
so that
\begin{equation}
  \delta \sim \frac{\sqrt{ 3\sqrt{3} \  e^{-i \pi/3}s}}{n^{1/6}}
\end{equation}
We also consider the scaling $\ell = \left\lfloor d n^{1/6} \right\rfloor$,
so that $L \sim d \sqrt{ 3\sqrt{3} \  e^{-i \pi/3}s}$.
Using \eqref{eq:gyncentralcrit}, \eqref{eq:y_ceva_central_arc} and \eqref{eq:dlog_g_crit_ceva},
we obtain 
\begin{equation}
  \begin{split}
    L_{\ell;-}^{(n)} \sim& \left( \frac{25}{432} \right)^{-n} \frac{1}{2 i \pi} 
    \int_{0}^{n^\epsilon} ds \exp \left( - \frac{3\sqrt{3}}{4} s^3 \right) 
    \frac{9\sqrt{3}}{4n} s^2 \\
    &\hspace{8em}\times \frac{4}{9} 
    \frac{\cosh \!\left( \frac{d}{2} \sqrt{ 3\sqrt{3} \  e^{-i \pi/3}s} \right)}{\sinh^3 \!\left( \frac{d}{2} \sqrt{ 3\sqrt{3} \  e^{-i \pi/3}s} \right)}
    \frac{3^{9/4} s^{3/2} e^{-i\pi/2}}{n^{1/2}}\\
                         &\sim
   - \frac{1}{n^{3/2}} \left( \frac{25}{432} \right)^{-n} \frac{3^{11/4}}{2 \pi} 
    \int_{0}^{\infty} \!\!ds \ s^{7/2} \exp \left( - \frac{3\sqrt{3}}{4} s^3 \right) 
    \frac{\cosh \!\left( \frac{d}{2} \sqrt{ 3\sqrt{3} \  e^{-i \pi/3}s} \right)}{\sinh^3 \!\left( \frac{d}{2} \sqrt{ 3\sqrt{3} \  e^{-i \pi/3}s} \right)}
  \end{split}
\end{equation}
and of course $L_{\ell;+}^{(n)} = \left( L_{\ell;-}^{(n)} \right)^*$,
while $L_{\ell}^{(n)} \sim L_{\ell;+}^{(n)} + L_{\ell;-}^{(n)}$ as again $L_{\ell;\mathrm{tail}}^{(n)}$ is negligible.

\paragraph{Normalization.}
We obtain from the relation $\hat L(1,g,9/5) 
= 1- \frac{1}{r^2(g(y))}= \frac{y(6-y)}{9} = 
\frac{5}{9} - \frac{4}{9}(1-y) + O( (1-y)^2)$
the estimate
\begin{equation}
  \begin{split}
    L^{(n)} \sim& \left( \frac{25}{432} \right)^{\! -n} \!\! \frac{1}{2 i \pi} 
    \int_{0}^{n^\epsilon} \!\! ds \exp \left( - \frac{3\sqrt{3}}{4} s^3 \right) 
    \frac{9\sqrt{3}}{4n} s^2 \!
    \left( \! -\frac{4\sqrt{3}}{9} \frac{s}{n^{1/3}} e^{-i \pi/3} + \frac{4\sqrt{3}}{9} \frac{s}{n^{1/3}} e^{i \pi/3} \right) \\
    \sim& \left( \frac{25}{432} \right)^{\! -n} \!\! \frac{1}{n^{4/3}} 
    \frac{\Gamma(7/3)}{2^{1/3}\sqrt{3}\pi}.
  \end{split}
  \label{eq:hatL_normalization}
\end{equation}
This eventually leads to the block density profile
\begin{equation}
  \begin{split}
    \rho_{\mathrm{block}}(d) =& - \frac{3^{13/4}}{2^{2/3} \Gamma(7/3)} 
    \int_{0}^{\infty} \!\!ds \ s^{7/2} \exp \left( - \frac{3\sqrt{3}}{4} s^3 \right) \\
    & \hspace{6em} \times \left(
      \frac{\cosh \!\left( \frac{d}{2} \sqrt{ 3\sqrt{3} \  e^{-i \pi/3}s} \right)}{\sinh^3 \!\left( \frac{d}{2} \sqrt{ 3\sqrt{3} \  e^{-i \pi/3}s} \right)}
    + \frac{\cosh \!\left( \frac{d}{2} \sqrt{ 3\sqrt{3} \  e^{i \pi/3}s} \right)}{\sinh^3 \!\left( \frac{d}{2} \sqrt{ 3\sqrt{3} \  e^{i \pi/3}s} \right)}
  \right)\\ 
  =& \ \Phi_{\mathrm{block}}'(d)
  \end{split}
  \label{eq:rhoblock_d}
\end{equation}
where, with the variable change $\mu = \frac{\sqrt{3}s}{2^{2/3}}$, 
\begin{equation}
  \begin{split}
    \Phi_{\mathrm{block}}(d) &= \frac{4}{\sqrt{3}\Gamma(7/3)} \int_0^{\infty} d\mu
    \mu^{3} e^{-\mu^{3}}
    \left\{ e^{i\pi/6} \left( 1+ \frac{3}{\sinh^{2}\left( \frac{\sqrt{3}}{2^{2/3}} d \sqrt{\mu} e^{-i\pi/6}\right)} \right) \right. \\
    &\hspace{12em} \left. + e^{-i\pi/6} \left( 1+ \frac{3}{\sinh^{2}\left( \frac{\sqrt{3}}{2^{2/3}} d \sqrt{\mu} e^{i\pi/6}\right)} \right)\right\}\\
  \end{split}
  \label{eq:Phiblock_d}
\end{equation}
so that
\begin{equation}
  \Phi_{\mathrm{block}}(d) 
  = \frac{4}{\Gamma(7/3)} \int_0^{\infty} d\mu \ \mu^{3} e^{-\mu^{3}}
    \left\{ 1 - 6 \frac{1 - \tilde c \breve c + \frac{1}{\sqrt{3}} \tilde s \breve s}{(\tilde c - \breve c)^2} \right\}
\end{equation}
with the shorthand notations of Proposition~\ref{prop:L_ellnasymptcrit}.
By derivation, we obtain the desired result \eqref{eq:rhoblock_d_real}.

\section{Conclusion, discussion}
\label{sec:conc}
It is interesting to confront our results with the predictions of \cite{Fleurat2024}
for the scaling limit of block-weighted quadrangulations.

\paragraph{Supercritical case.}
In \cite[Theorem 5]{Fleurat2024}, it was shown that, as a metric space,
the block-weighted quadrangulation converges to $\sqrt{2}$ times the \emph{Brownian Continuous Random Tree (BCRT)},
up to a scaling factor $\frac{\sigma(u)}{2 \kappa_{u}^{\mathrm{quad}}}n^{-1/2}$
where $\sigma(u)^2 := \frac{3u - 3 + 2\sqrt{u(u-1)}}{5u-9} = \frac{3-\ys}{3(1-\ys)}$
and $\kappa_{u}^{\mathrm{quad}}$ is some appropriate constant.
It is known \cite[Equation~(33)]{Aldous91_CRT_II},
see also \cite[Theorem 3.3.3 with $\gamma=2$]{DuquesneLeGall05_randomtreeslevyprocesses}
that the BCRT has a Rayleigh distance profile
$\rho_{\mathrm{Rayleigh}}(x) = 2x e^{-x^{2}}$,
which corresponds to \eqref{eq:distanceprofile_supercrit} and \eqref{eq:rho_sup_Rayleigh_distance_profile}
up to rescaling $d = D \sqrt{2} \ x$.

Our result is therefore consistent with that of \cite{Fleurat2024} provided that
\begin{equation}
  \kappa_{u}^{\mathrm{quad}} = \frac{\sigma(u)}{\sqrt{2}}D.
\end{equation}
From the expression \eqref{eq:distanceprofile_supercrit} for $D$
and the relation \eqref{eq:uysinv} between $u$ and $\ys$,
it is easily checked that this is equivalent to
\begin{equation}
  \kappa_{u}^{\mathrm{quad}} = \frac{\partial \hat h}{\partial t}(1,\zs) \frac{1}{\hat h^2(1,\zs) \left( 1- \frac{1}{\hat h (1,\zs)} + Q^{\mathrm{sim}}(\zs) \right)},
  \label{eq:kappa_Salvy_in_our_notations}
\end{equation}
by using the identifications $\hat h(1,\zs) = \left( \frac{3}{3-\ys} \right)^2$ and
$Q^{\mathrm{sim}}(\zs) = \frac{\ys(2-\ys)}{3}$.
From the relation $\hat K^{\mathrm{sim}} = 1 - \frac{1}{\hat h}$ we see that
the coefficient $\kappa_{u}^{\mathrm{quad}}$ is identified to
\begin{equation}
  \begin{split}
    \kappa_{u}^{\mathrm{quad}} 
    &= \left. \frac{\partial}{\partial t}\log(\hat K^{\mathrm{sim}}(t,\zs) + Q^{\mathrm{sim}}(\zs)) \right\vert_{t=1} \\
      &= \frac{\sum\limits_{\ell \geq 1} (\ell-1) \left( K_{\ell}^{\mathrm{sim}}(\zs) + \delta_{\ell,1}Q^{\mathrm{sim}}(\zs) \right)}{\sum\limits_{\ell \geq 1}\left( K_{\ell}^{\mathrm{sim}}(\zs) + \delta_{\ell,1}Q^{\mathrm{sim}}(\zs) \right)}
    \end{split}
\end{equation}
which is nothing but the mean of the root distance
of the closest incident vertex of a uniformly chosen edge in a simple rooted quadrangulation
drawn with a weight $\zs$ per face\footnote{
The term $\delta_{\ell,1}Q^{\mathrm{sim}}(\zs)$ corresponds to the situation
where the chosen edge is the root edge itself,
which is not taken into account in $K_{\ell}^{\mathrm{sim}}$.}.
This is fully consistent with the interpretation of $\kappa_{u}^{\mathrm{quad}}$ in \cite[Theorem 5]{Fleurat2024}, the benefit of our approach
is to provide an explicit expression for this quantity.

Recall that, for $u \to \infty$, $\ys \to 0$,
$D \to \frac{1}{2\sqrt{2}}$ (see Figure~\ref{fig:scaling_factors_and_distance_profiles_Kl} and Remark~\ref{rem:scaling_supercrit_limit}), while $\sigma(u) \to 1$.
This gives $\kappa_{u}^{\mathrm{quad}} \to \frac{1}{4}$,
which is also consistent with that interpretation:
as $\zs \to 0$, we select quadrangulations with the smallest number of faces,
hence one face, two edges, and three vertices.
The two such rooted quadrangulations indeed have respectively a mean root distance $0$ and $\frac{1}{2}$,
which averages to $\kappa_{\infty}^{\mathrm{quad}} = \frac{1}{4}$.

\paragraph{Critical case.}
In the critical case $u=9/5$,
\cite[Theorem 5]{Fleurat2024} now predicts, 
with a scaling $\frac{2^{2/3}}{3 \  \kappa_{9/5}^{\mathrm{quad}}}n^{-1/3}$,
that the quadrangulation converges to the $3/2$-stable Lévy tree,
whose density profile can be computed
\cite[Theorem 3.3.3]{DuquesneLeGall05_randomtreeslevyprocesses} as:
\begin{equation}
  \rho_{\mathrm{Levy}}(x) = \frac{3^{5/3}}{4}\Gamma(1/3)x \Airy\left( 3^{2/3}\frac{x}{2} \right).
  \label{eq:DuquesneLevy_profile}
\end{equation}
Setting $x=\frac{2^{2/3}}{3 \  \kappa_{9/5}^{\mathrm{quad}}} d$,
we recognize our result \eqref{eq:distanceprofile_crit} for $\rho_{\mathrm{crit}}$ upon setting
\begin{equation}
  D = 6^{1/3}\kappa_{9/5}^{\mathrm{quad}},
\end{equation}
\emph{id. est.} $\kappa_{9/5}^{\mathrm{quad}} = \frac{2 \pi^2 - 15}{6}$
which is fully consistent with Equation~\eqref{eq:kappa_Salvy_in_our_notations}
at $\ys=1$ for which $\zs=\frac{4}{27}$,
$\hat h(1,\zs) = \frac{9}{4}$ and $Q^{\mathrm{sim}}(\zs) = \frac{1}{3}$,
and $\frac{\partial \hat h}{\partial t}(1,\zs) =\frac{3}{4}(2 \pi^2 - 15)$ 
as seen from the linear term in \eqref{eq:hexptsmall}.

\paragraph{Subcritical case.}
In the subcritical case $u < 9/5$,
we first note that $D=1$ at $u=1$, so that $\rho_{\mathrm{sub}} = \rho_1$.
The scaling factor $D$ in \eqref{eq:rho_scaling_subcrit} can be understood as follows:
it is known that for $u < 9/5$, the map has a unique giant block
of extensive size $k_u = f(u) n$ with $f(u) = \frac{9-5u}{3(3+u)}$.
Moreover, the distance from any point to the giant block
is of negligible order $o(n^{1/4})$ 
compared to the typical distance inside the giant block $\propto k_u^{1/4}$.
It is therefore expected that the two-point function should be exactly the same
when the distance is expressed through the rescaled variable
$\left( \frac{d}{k_u} \right)^{1/4}$.
Since we express the distance in terms of the variable $\left( \frac{d}{n^{1/4}} \right)$,
we expect to obtain the same two-point function upon rescaling the distance by a factor
$\left( \frac{k_u}{n} \right)^{1/4} = \left( f(u) \right)^{1/4}$.
In particular, the passage from $\rho_u$ to $\rho_{\mathrm{sub}} = \rho_1$
simply corresponds to rescaling by a factor $\left( \frac{f(u)}{f(1)} \right)^{1/4}$
which is precisely our factor $D$ in \eqref{eq:rho_scaling_subcrit}.

\paragraph{Block two-point function}
In this paper we showed the computations concerning the block two-point function
only in the critical case.
In the subcritical case, we expect that,
since the root block of a quadrangulation with two marked edges 
is the giant block with probability $1$, with size $f(u)n$,
the block distance profile is clearly equal to
$\frac{1}{\left( f(u) \right)^{1/4}}\rho_0\left(\frac{d}{\left( f(u) \right)^{1/4}}\right)$,
which is nothing but $\rho_u(d)$.

As for the critical case itself,
we expect that the two marked edges are in a ``large'' block of size $k_u = x n^{2/3}$,
with $x$ distributed according to some probability density $\sigma(x)$.
We then expect that, since $k_u^{1/4} = x^{1/4}n^{1/6}$:
\begin{equation}
  \rho_{\mathrm{block}}(d) = \int_{0}^{\infty} \sigma(x)dx \times \frac{1}{x^{1/4}}\rho_0\left(\frac{d}{x^{1/4}}\right).
  \label{eq:rho_block_convolution}
\end{equation}
This identity is checked in \cite{DG26} where an explicit expression for $\sigma(x)$ is obtained.

\appendix

\section{Approximations of $\hat{h}$ near its singularity: proofs}
\label{app:hsing}

In this appendix we prove the approximation results stated in
Section~\ref{ssec:hsingtext} for $\hat h(t,z(y))$ as $y \to 1$.
By~\eqref{eq:hhatmeroalt} and~\eqref{eq:ryexp}, we may write
\begin{equation}
  \label{eq:hphi}
  \hat h(t,z(y)) = \frac{9}{(3-y)^2} \left( 1 - 4 (1-t) \phi(t,x(y)) \right)
\end{equation}
where we introduce the simpler auxiliary function
\begin{equation}
  \phi(t,x) := \frac{(1-x^2)^2}{4x^2} \sum_{k=1}^\infty \frac{k x^{3k}}{1-t x^{k}}.
  \label{eq:phidef}
\end{equation}
Our task boils down to understanding the behavior of $\phi(t,x)$ for
$x \to 1$. Note that, reasoning as in~\eqref{eq:hhatmero} backwards,
or alternatively using again~\eqref{eq:kl_simpleelements} in the
series expansion of $\hat h$, we find that $\phi(t,x)$ expands as a
series in $t$ as
\begin{equation}
  \label{eq:phiser}
  \phi(t,x) = \frac{(1+x)^2}{4x^2} \sum_{n=0}^\infty \frac{x^{n+3}(1-x)^2}{(1-x^{n+3})^2} \cdot t^n.
\end{equation}

\paragraph{The fixed $t$ regime.} We start by the case where $t$
remains fixed as $x\to 1$. It is instructive to consider first the
case $|t| \leq 1$: by taking the termwise limit in~\eqref{eq:phiser}
we see that $\phi(t,x)$ tends to the shifted dilogarithm $\Phi(t)$
of~\eqref{eq:Htexp}. To justify the exchange of limit and summation,
we observe that, for any integer $m$, we have
\begin{equation}
  \left\lvert \frac{x^{m/2}}{1-x^{m+1}} \right\rvert \leq
  \frac{|x|^{m/2}}{1-|x|^{m+1}} = \frac{1}{1-|x|} \cdot \frac{1}{|x|^{-m/2}+\cdots+|x|^{m/2}} \leq \frac{1}{1-|x|} \cdot \frac{1}{m+1}
\end{equation}
where the last inequality results from $|x|^k+|x|^{-k} \geq 2$ for any
$k=-m/2,\ldots,m/2$. Applying this with $m=n+2$, we find
\begin{equation}
  \left\lvert \phi(t,x) \right\rvert \leq
  \frac{|1+x|^2}{4|x|} \left( \frac{|1-x|}{1-|x|} \right)^2 \sum_{n=0}^\infty \frac{|t|^n}{(n+3)^2}.
\end{equation}
The dominated convergence theorem then tells us that
$\phi(t,x) \to \Phi(t)$ provided that $x$ tends to $1$ in such a way
that $\frac{|1-x|}{1-|x|}$ remains bounded, i.e.\ non tangentially to
the unit circle. This is the case under the assumption of
Section~\ref{ssec:hsingtext} that
$\arg \delta \to \alpha \in (-\frac\pi2,\frac\pi2)$, since
$1-x(y) \sim \delta$. Note that this reasoning gives a direct proof of
the observation made in Remark~\ref{rem:Htexp} that
$\hat h(t,z(y)) \to H(t)$ for $|t|<1$.

Let us now consider the general situation where $t$ is possibly
outside the unit circle, in which case we should proceed from the
meromorphic form~\eqref{eq:phidef} of $\phi(t,x)$. It is convenient to
write
\begin{equation}
  x = e^{-\omega \ve}
\end{equation}
with $\omega$ having modulus $1$ and $\ve$ real and
positive. Note that $\omega \ve=\delta+O(\delta^2)$, so the
limit considered in Section~\ref{ssec:hsingtext} corresponds to
$\ve \to 0^+$, $\omega \to e^{i\alpha}$. It is plain that, in
this limit, the sum in $\phi(t,x)$ yields an integral, namely
\begin{equation}
  \label{eq:phitxlim}
  \phi(t,x) \sim \omega^2 \ve^2 \sum_{k=1}^\infty \frac{k e^{-3\omega\ve k}}{1-t e^{-\omega \ve k}} \to e^{2i\alpha} \int_0^\infty dv \frac{v e^{-3e^{i\alpha} v}}{1-t e^{-e^{i\alpha} v}} = \Phi_\alpha(t)
\end{equation}
where $\Phi_\alpha(t)$ is as in~\eqref{eq:hexptfin}, as seen via the
change of variable $w=e^{i\alpha}v$. Of course, we shall assume that
$t$ does not belong to the curve
$\mathcal S_\alpha:=\{ e^{e^{i\alpha} v}, v > 0\}$ so that the
integral above does not pass through a pole. Note that we may still
take $t=1$ as the integral is regular at $v=0$.

For our purposes we need to control the error when approximating
$\phi(t,x)$ by $\Phi_\alpha(t)$. To this end, we use the
Euler-Maclaurin formula at first order, which asserts that
\begin{equation}
  \label{eq:EulerMaclaurin}
  \sum_{k=0}^\infty f(k) = \int_0^\infty f(k) dk + \frac{f(0)}{2} + \int_0^\infty f'(k) \left( k - \lfloor k \rfloor - \frac 12 \right) dk
\end{equation}
whenever $f$ is a differentiable function with sufficient decay at
infinity. Here we apply the formula with
$f(k)=\omega^2 \ve^2 k \frac{e^{-3\omega \ve k}}{1-t e^{-\omega \ve
    k}}$, so we have $f(0)=0$. The first integral is equal to
$\Phi_{\arg\omega}(t)$, which is equal to $\Phi_\alpha(t)$ provided
that $\omega$ is close enough to $e^{i\alpha}$ so that the curve
$\mathcal S_{\arg \omega}$ can be deformed into $\mathcal S_\alpha$
without crossing $t$. As for the rightmost integral, corresponding to
the error term, it may be bounded by
\begin{equation}
  \label{eq:EMLerrorbound}
  \frac12 \int_0^\infty |f'(k)| dk = \frac{\ve}2 \int_0^\infty \left\lvert
    \frac{d}{dv} \frac{v e^{-3\omega v}}{1-te^{-\omega v}} \right\vert dv = O(\ve).
\end{equation}
From this we conclude that
\begin{equation}
  \label{eq:phiestim1}
  \phi(t,x) = \Phi_\alpha(t) + O(\ve)
\end{equation}
where the error is uniform for $t$ in any compact subset of
$\C \setminus \mathcal S_\alpha$.

Interestingly, we may even control the error for $t$ approaching
$\mathcal S_\alpha$ slowly enough. Namely, the integrand in the
right-hand side of~\eqref{eq:EMLerrorbound} involves a denominator
$|1-te^{-\omega v}|^2$. Upon integrating, we find that the error is
bounded by a constant times
$\sup_{v \geq 0} \frac{\ve}{|1-t e^{-\omega v}|}$. Assume that
$\omega$ tends to $e^{i\alpha}$ regularly, i.e.
$\omega = e^{i\alpha}+O(\ve)$, and fix $R>1$: then for $|t| \leq R$ we
have
$\sup_{v \geq 0} \frac{\ve}{|1-t e^{-\omega v}|} =
\Theta(\ve/d_\alpha(t))$ where $d_\alpha(t)$ denotes the distance
between $t$ and $\mathcal S_\alpha$. For our purposes, we only need
this control for $t$ approaching $1$: assuming that $\arg(t-1)$
remains away from $\alpha$ we have $d_\alpha(t) = \Theta(|t-1|)$ so we
find that the error in~\eqref{eq:phiestim1} is a $O(\ve/|t-1|)$.
Plugging these estimates for $\phi(t,x)$ into~\eqref{eq:hphi} yields
Proposition~\ref{prop:hexptfin}, upon recalling that
$\ve \sim |\delta|$.

\paragraph{The regime $t$ close to $1$.} We now analyze the regime
where $t-1$ is of order $\ve$. The previous discussion
provides little information here. It is convenient to write
\begin{equation}
  t = x^{-\tau} = e^{\tau \omega \epsilon}
\end{equation}
with $\tau$ not a positive integer, so that
\begin{equation}
  \label{eq:phit1sum}
  \phi(t,x) = \frac{(\sinh \omega \ve)^2}{(\omega \ve)^2} F_{\omega\ve}(\tau), \qquad
  F_{\omega\ve}(\tau) := \omega^2 \ve^2  \sum_{k=1}^\infty \frac{k e^{-3\omega\ve k}}{1-e^{-\omega \ve (k-\tau)}}.
\end{equation}
At the zeroth order in $\ve$, we may approximate the sum by an
integral as in~\eqref{eq:phitxlim}, and find that
$F_{\omega\ve}(\tau)$, hence $\phi(t,x)$, tend to
$\Phi_\alpha(1)=\frac{2\pi^2-15}{12}$. We want however more, namely to
keep track of the dependence on $\tau$, which requires to push the
expansion in $\ve$ further. This poses a new difficulty: on the one
hand, if we expand the summand in~\eqref{eq:phit1sum} as a series in
$\tau$ and try approximating sums by integrals as before, the latter
diverge at $0$; on the other hand, if we just take the termwise limit
$\ve \to 0$, we get a divergent sum. To overcome this difficulty, we
introduce a cutoff $K$, such that $1 \ll K \ll \ve^{-1}$, and split the sum
in~\eqref{eq:phit1sum} in two: let us denote by
$F_{\omega\ve}^{(<K)}(\tau)$ and $F_{\omega\ve}^{(\geq K)}(\tau)$ the
contributions to $F_{\omega\ve}(\tau)$ coming from the terms
$k=1,\ldots,K-1$, and $k=K,K+1,\ldots$, respectively. These two
contributions will be analyzed by different methods: termwise limit in
the sum for the former, approximation by integrals for the latter.

Before attacking the computations, let us also mention that we would
like to obtain a uniform control on the error, even for $\tau$
large. To this end, we introduce a secondary scale $T$, also possibly
depending on $\ve$ and satisfying $1 \leq T \ll K$, and we assume that
\begin{equation}
  \label{eq:tauconst}
  |\tau| \leq T, \qquad \min_{k \in \Z_{>0}} |\tau-k| \geq \vr
\end{equation}
where $\vr \in (0,\frac12)$ is arbitrary and fixed. (We could in
principle allow the third scale $\vr$ to vary with $\ve$ but this is
not needed for our purposes.)

We now analyze $F_{\omega\ve}^{(<K)}(\tau)$ by taking the termwise limit in the
sum. First, by a Taylor expansion we have for any $k=1,\ldots,K-1$,
\begin{equation}
  \frac{\omega \ve (k-\tau)}{1-e^{-\omega \ve (k-\tau)}} = 1 + O(\ve K), \qquad
  e^{-3\omega \ve k} = 1 - O(\ve K)
\end{equation}
with uniform error terms. Multiplying these two expansions together
and by $\frac{k}{k-\tau}$, which is a $O(T)$ as it is
largest for $k \sim T$ and $k-\tau=\Theta(1)$, we get
\begin{equation}
  \frac{\omega \ve k e^{-3\omega\ve k}}{1-e^{-\omega \ve (k-\tau)}} = \frac{k}{k-\tau} + O(\ve K T)
\end{equation}
and thus, by summing over $k$,
\begin{equation}
  \sum_{k=1}^{K-1} \frac{\omega \ve k e^{-3\omega\ve k}}{1-e^{-\omega \ve (k-\tau)}} = \sum_{k=1}^{K-1} \frac{k}{k-\tau} + O(\ve K^2 T).
\end{equation}
Now, introducing the digamma function $\psi$, which
satisfies $\psi(v+1)-\psi(v)=\frac1v$, the harmonic sum may be
rewritten as
\begin{equation}
  \sum_{k=1}^{K-1} \frac{k}{k-\tau} = K-1 + \tau \left(\psi(K-\tau)-\psi(1-\tau)\right) =
  K + \tau \psi(K-\tau)- \tau \psi(-\tau).
\end{equation}
Multiplying by $\omega \ve$ and using finally that $\psi(K-\tau)=\ln K +O(K^{-1})$, we obtain
\begin{equation}
  \label{eq:FmKesti}
  F_{\omega\ve}^{(<K)}(\tau) = \left( K + \tau \ln K - \tau \psi(-\tau) \right) \omega \ve + O(\ve^2 K^2 T).
\end{equation}

We turn to the analysis of $F_{\omega\ve}^{(\geq K)}(\tau)$ where we will use
again the Euler-Maclaurin formula applied to the function
$f(k)=\omega^2 \ve^2 k \frac{e^{-3\omega \ve k}}{1-t e^{-\omega \ve
    k}}$, but now pushed to the third order:
\begin{equation}
  \label{eq:EulerMaclaurinthird}
  \sum_{k=K}^\infty f(k) = \int_K^\infty f(k) dk + \frac{f(K)}{2} - \frac{f'(K)}{12} + \int_K^\infty f'''(k) \frac{B_3(k - \lfloor k \rfloor)}{6}dk.
\end{equation}
Here, $B_3(v)=v^3-\frac32 v^2+\frac12 v$ is the Bernoulli polynomial
of order $3$. Let us analyze each term in the formula. By a change of
variable $v=\ve k$, the first integral reads
\begin{equation}
  \int_K^\infty f(k) dk = \omega^2 \int_{\ve K}^\infty dv \frac{v e^{-3\omega v}}{1-t e^{-\omega v}}.
\end{equation}
Since $t=1+O(\ve T)$, we may expand the integrand as
\begin{equation}
  \label{eq:firstintser}
  \frac{v e^{-3\omega v}}{1-t e^{-\omega v}} = \frac{v e^{-3\omega v}}{1-e^{-\omega v}} + (t-1) \frac{v e^{-4\omega v}}{(1-e^{-\omega v})^2} + O\left( \ve^2 T^2 \frac{v e^{-5\omega v}}{(1-e^{-\omega v})^3} \right).
\end{equation}
Integrating each of these terms in $v$ from $\ve K$ to $\infty$ we get
\begin{equation}
  \begin{split}
    \int_{\ve K}^\infty dv \frac{v e^{-3\omega v}}{1-e^{-\omega v}} &=
    \underbrace{\frac{2\pi^2-15}{12}}_{\Phi(1)} - \omega \ve K + O(\ve^2 K^2), \\
    (t-1) \int_{\ve K}^\infty dv \frac{v e^{-4\omega v}}{(1-e^{-\omega v})^2} &= \left(\omega \ve \tau + O(\ve^2 T^2)\right) \left( - \ln (\omega \ve K) + \frac{13-2 \pi^2}4 + O(\ve K) \right) \qquad \\
    &= \omega \ve \tau \left( - \ln (\omega \ve K) + \frac{13-2 \pi^2}4  \right) + O(\ve^2 K T, \ve^2 T^2 \ln \ve ),
  \end{split}
\end{equation}
while the last error term in~\eqref{eq:firstintser} yields a
$O(\ve^2 T^2 (\ve K)^{-1}) = O(\ve K^{-1} T^2)$ since the integrand
diverges as $v^{-2}$ for $v \to 0$, so the integral diverges as the
inverse of its lower bound.

Now, the middle terms in~\eqref{eq:EulerMaclaurinthird} read
\begin{equation}
  \begin{split}
    \frac{f(K)}2 &= \frac{\omega^2 \ve^2 K e^{-3 \omega \ve K}}{2(1-e^{-\omega \ve(K-\tau)})} =
                   \frac{\omega \ve}2 \cdot \frac{K}{K-\tau} \cdot \frac{\omega \ve (K-\tau)}{1-e^{-\omega \ve(K-\tau)}} \cdot e^{-3\omega \ve K} \\
    &=
      \frac{\omega \ve}{2} \left(1+O(K^{-1}T)\right) (1+O(\ve K)) =
      \frac{\omega \ve}2 + O(\ve K^{-1}T, \ve^2 K), \\
    \frac{f'(K)}{12} &= O(\ve K^{-2} T)
  \end{split}
\end{equation}
and the modulus of the last error term in~\eqref{eq:EulerMaclaurinthird} may be bounded by a constant times
\begin{equation}
  \int_K^\infty |f'''(k)| dk =
  \ve^3 \int_{\ve K}^\infty \left\lvert
    \frac{d^3}{dv^3} \frac{v e^{-3\omega v}}{1-te^{-\omega v}} \right\vert dv.
\end{equation}
Since the integral is on $v \geq \ve K \gg t-1$, we may estimate the
right-hand side by substituting $t=1$, and noting that the integrand
diverges as $v^{-3}$ for $v \to 0$, so the integral diverges as
$(\ve K)^{-2}$: multiplying by $\ve^3$ we conclude that the error term
is a $O(\ve K^{-2})$.

By collecting everything, we arrive at
\begin{multline}
  F_{\omega\ve}^{(\geq K)}(\tau) = \frac{2\pi^2-15}{12} + \left( -K - \tau \ln(\omega \ve K) + \frac{13-2 \pi^2}4 \tau + \frac12 \right) \omega \ve  \\
  + O(\ve^2 K^2,\ve^2 T^2 \ln \ve,\ve K^{-1} T^2)
\end{multline}
where we have used the comparison of scales
$1 \leq T \ll K \ll \ve^{-1}$ to identify the possible largest
errors. Adding to~\eqref{eq:FmKesti} we see that the dependence in $K$
for the non-error terms disappears, as it should, to yield
\begin{multline}
  F_{\omega\ve}(\tau) = \frac{2\pi^2-15}{12} -\tau \omega \ve \ln(\omega \ve)+  \left( - \tau \psi(-\tau) + \frac{13-2 \pi^2}4 \tau + \frac12 \right) \omega \ve \\
  + O(\ve^2 K^2 T,\ve^2 T^2 \ln \ve,\ve K^{-1} T^2).
\end{multline}
Now, we may optimize the error by choosing the cutoff $K$ in such a
way that $\ve^2 K^2 T \sim \ve K^{-1} T^2$, i.e.
$K \sim (T/\ve)^{1/3}$, so that the error is $O(\ve^{4/3}
T^{5/3})$. We see that we should assume
\begin{equation}
  T \ll \ve^{-1/5}
\end{equation}
for this error to be negligible with respect to the terms of order
$\ve$.

In order to connect with Section~\ref{ssec:hsingtext}, let us rewrite
our result in terms of the first auxiliary function $\phi(t,x)$ and of
the variable $\delta=\omega \ve+O(\ve^2)$.  Note that neither the
proportionality factor
$(\sinh \omega \ve)^2/(\omega \ve)^2=1+O(\delta)^2$ appearing in
\eqref{eq:phit1sum}, nor the redefinition $t=1+\tau \delta$ (replacing
$t=x^{-\tau}$) alter the expansion at first order.

\begin{prop}
  Given $\tau \in \C \setminus \Z_{>0}$, we have as $\delta \to 0$
  \begin{equation}
    \phi\left(1+\tau \delta,x(1-\delta^2/3)\right) 
    = \frac{2\pi^2-15}{12} - \tau \delta \ln \delta+ \left( - \tau \psi(-\tau) + \frac{13-2 \pi^2}4 \tau + \frac12 \right) \delta + O(\delta^{4/3})
  \end{equation}
  where the error term $O(\delta^{4/3})$ is uniform for $\tau$ in any
  compact subset of $\C \setminus \Z_{>0}$. It becomes a uniform
  $O(\delta^{(4-5b)/3})$ for all $\tau$ satisfying~\eqref{eq:tauconst}
  with $T=\delta^{-b}$, $b \in (0,\frac15)$.
 \end{prop}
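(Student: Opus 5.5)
The plan is to take the expansion of $F_{\omega\ve}(\tau)$ just obtained and convert it into a statement about $\phi(1+\tau\delta, x(1-\delta^2/3))$. By \eqref{eq:phit1sum}, $\phi(t,x) = \frac{(\sinh\omega\ve)^2}{(\omega\ve)^2} F_{\omega\ve}(\tau)$ when $t=x^{-\tau}$. So I will do three things: express $\omega\ve$ in terms of $\delta$, convert the parameter $\tau$ defined through $t=x^{-\tau}$ into the parameter $\tau$ defined through $t=1+\tau\delta$, and check that all discrepancies are absorbed in the error term.

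\emph{Step 1: relate $\omega\ve$ and $\delta$.} Since $x(y)=1-\delta+O(\delta^2)$ is analytic in $\delta$ (from \eqref{eq:xyexplicit}), we have $\omega\ve=-\ln x=\delta+O(\delta^2)$. Hence:
\begin{itemize}
\item $\omega\ve\ln(\omega\ve)=\delta\ln\delta+O(\delta^2\ln\delta)$;
\item the prefactor satisfies $(\sinh\omega\ve)^2/(\omega\ve)^2=1+O(\delta^2)$;
\item this prefactor multiplies a bounded quantity, so it only contributes $O(\delta^2)$.
\end{itemize}

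\emph{Step 2: change of variable for $\tau$.} Write $t=1+\tau\delta=x^{-\tilde\tau}$, so $\tilde\tau=\ln(1+\tau\delta)/(\omega\ve)=\tau+O(\tau\delta+\tau^2\delta)$. Substituting $\tilde\tau$ into the $F$ expansion produces two kinds of differences:
\begin{itemize}
\item In the smooth terms, of the form $\tilde\tau\,\delta\times\text{const}$ and $\tilde\tau\,\delta\ln\delta$, the difference is $O(T^2\delta^2|\ln\delta|)$.
\item The digamma term needs more care. By the mean value theorem, $\tilde\tau\psi(-\tilde\tau)-\tau\psi(-\tau)$ is bounded by $|\tilde\tau-\tau|$ times $\sup|\psi|+|\tau\psi'|$ on the segment between them. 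If $\tau$ stays at distance $\ge\vr$ from $\Z_{>0}$ and $|\tilde\tau-\tau|=O(T^2\delta)\ll\vr$, then $\tilde\tau$ stays at distance $\ge\vr/2$. On that region $\psi'$ is bounded, and $\psi$ grows at most logarithmically.
\end{itemize}
The resulting extra error is $O(\delta\cdot T^2\delta\cdot T\log T)$, which is $O(\delta^{2-3b}\log)$ and negligible compared to $\delta^{(4-5b)/3}$ for $b<1/5$. Indeed $2-3b>(4-5b)/3$ is equivalent to $2>4b$.

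\emph{Step 3: collect errors.} With $T=\delta^{-b}$ and the optimal cutoff $K\sim(T/\ve)^{1/3}$, the main error from the preceding analysis is $O(\ve^{4/3}T^{5/3})=O(\delta^{(4-5b)/3})$. The other errors are:
\begin{itemize}
\item $O(\ve^2T^2\ln\ve)$, which is smaller than the main error;
\item the conversion errors from Steps 1 and 2, which are also smaller.
\end{itemize}
For $\tau$ in a fixed compact subset of $\C\setminus\Z_{>0}$, we may take $T$ constant and $\vr$ fixed, which gives the uniform $O(\delta^{4/3})$.

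The main obstacle is making sure the Euler--Maclaurin and termwise analysis is uniform when $\omega$ is complex. This requires that $|1-e^{-\omega\ve(k-\tau)}|$ stays bounded below by a constant times $\ve|k-\tau|$ for $k<K$, and it is here that the hypothesis that $\arg\delta$ stays away from $\pm\pi/2$ enters. I would verify this using $|1-e^{-z}|\ge c|z|$ for $|z|\le1$, and for larger $k$ use $\Re(\omega)>0$.
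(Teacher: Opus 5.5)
Your proposal is correct and follows the paper's route. The paper also reads the statement off the $F_{\omega\ve}(\tau)$ expansion with $K\sim(T/\ve)^{1/3}$, and dismisses in one sentence both the prefactor $(\sinh\omega\ve)^2/(\omega\ve)^2$ and the switch from $t=x^{-\tau}$ to $t=1+\tau\delta$; you check these two points in more detail.

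One small correction in Step 2: $\psi(-\tau)$ has a pole at $\tau=0$, so $\sup|\psi|+|\tau\psi'|$ is not bounded near $0$. However, $\tau\psi(-\tau)=1+\tau\psi(1-\tau)$ is analytic on $\C\setminus\Z_{>0}$, and its derivative is $O(T\log T)$ on your region, so your estimate stands.
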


Proposition~\ref{prop:hexptsmall} is an immediate corollary of this
proposition and of~\eqref{eq:hphi}.

\printbibliography
\end{document}